\documentclass[11pt]{article}%{ctexart}
\usepackage{amssymb}
\usepackage{amsmath}
\usepackage{amsthm}
\usepackage{graphicx}
\usepackage[margin=0.5in]{geometry}
\usepackage{bm}
\usepackage{amsthm}
\usepackage{amscd}
\usepackage{amsbsy}
\usepackage{amsmath}
\usepackage{amsfonts}
\usepackage{amssymb}
\usepackage{calligra}
\usepackage{xcolor}
\usepackage{float}
\usepackage[T1]{fontenc}
\usepackage{multirow}
\usepackage{mathrsfs}
\usepackage{mathtools}
\usepackage{graphicx}
\usepackage{epstopdf}
\usepackage{subfigure}
\usepackage{booktabs}
\usepackage{listings}
\usepackage{longtable}
\usepackage{latexsym}
\usepackage{arydshln}
\usepackage{enumerate}
\usepackage{epsfig}
\usepackage{cite}
\usepackage{verbatim}
\usepackage{overpic}
\usepackage{abstract}
\allowdisplaybreaks[4]

\def\hs{\hspace{0.3cm}}

\newtheorem {theorem*}{Theorem}
\newtheorem {theorem} {Theorem}

\newtheorem{proposition}{Proposition}
\newtheorem{lemma}{Lemma}
\newtheorem{corollary}{Corollary}
\newtheorem{remark}{Remark}

\numberwithin{equation}{section}
\numberwithin{lemma}{section}
\numberwithin{theorem}{section}
\numberwithin{proposition}{section}
\numberwithin{corollary}{section}

\usepackage[colorlinks,
            CJKbookmarks=true,
            linkcolor=blue,
            anchorcolor=red,
             urlcolor=blue,
            citecolor=blue
            ]{hyperref}

\begin{document}
\arraycolsep=1pt

\title{\Large\bf  Toeplitz operators on Bergman spaces with exponential weights
\footnotetext{\hspace{-0.35cm}
\endgraf{\it E-mail: yiyuanzhang@e.gzhu.edu.cn (Yiyuan Zhang)}
\endgraf \hspace{1.1cm} {\it wxf@gzhu.edu.cn (Xiaofeng Wang)}
\endgraf \hspace{1.1cm} {\it huzj@hutc.zj.cn (Zhangjian Hu)}
\endgraf This project is supported by the National Natural Science
Foundation of China (Grant Nos. 11971125, 11771139).
}}
\author{Yiyuan Zhang$^{1}$, Xiaofeng Wang$^{1, }$\thanks{Corresponding author}, Zhangjian Hu$^{2}$\\
\small \em 1. School of Mathematics and Information Science and Key Laboratory of Mathematics and Interdisciplinary\\ \small \em Sciences of the Guangdong Higher Education Institute, Guangzhou University, Guangzhou, 510006, China\\
\small \em 2. Department of Mathematics, Huzhou Teachers College, Huzhou 313000, Zhejiang, China}
\date{ }
\maketitle

\vspace{-0.8cm}

\begin{center}
\begin{minipage}{16cm}\small
{\noindent{\bf Abstract} \quad In this paper, we focus on the weighted Bergman spaces $A_{\varphi}^{p}$ in $\mathbb{D}$ with $\varphi\in\mathcal{W}_{0}$. We first give characterizations of those finite positive Borel measures $\mu$ in $\mathbb{D}$ such that the embedding $A_{\varphi}^{p}\subset L_{\mu}^{q}$ is bounded or compact for $0<p,q<\infty$. Then we describe bounded  or compact Toeplitz operators $T_{\mu}$ from one Bergman space $A_{\varphi}^{p}$ to another $A_{\varphi}^{q}$ for all possible $0<p,q<\infty$. Finally, we characterize Schatten class Toeplitz operators on $A_{\varphi}^{2}$.
\endgraf{\bf Mathematics Subject Classification (2010).}\quad Primary 47B35; Secondary 32A36; 47B10.
\endgraf{\bf Keywords.}\quad Toeplitz operators, Bergman spaces, Carleson measures, Schatten class.}
\end{minipage}
\end{center}

\section{Introduction}

%Over the last decades, many authors have devoted themselves to the development of operator theory in classical Bergman spaces, refer to the book \cite{Zhu1} for an account of that theory. In this paper, we will focus on a more general setting of Bergman spaces, which we are going to introduce below.

Let $\mathbb{C}$ denote the complex plane, $\mathbb{R}$ be the real line and $H(\mathbb{D})$ be the space of analytic functions in the unit disc $\mathbb{D}=\{z\in\mathbb{C}:|z|<1\}$.
For $0<p<\infty$ and a subharmonic function $\varphi$ on $\mathbb{D}$, the weighted Bergman space $A_{\varphi}^{p}$ consists of $f\in H(\mathbb{D})$ such that
$$\left\|f\right\|_{A_{\varphi}^{p}}=\left\{\int_{\mathbb{D}}
\left|f(z)e^{-\varphi(z)}\right|^{p}dA(z)\right\}^{\frac{1}{p}}<\infty,$$
where
$dA(z)=\frac{1}{\pi}dxdy$
denotes the normalized Lebesgue area measure on $\mathbb{D}$.

Let $C_{0}$ be the space of all continuous functions $\rho$ on $\mathbb{D}$ satisfying $\lim_{|z|\rightarrow1}\rho(z)=0$. The class $\mathcal{L}$ is set to be
$$\mathcal{L}=\left\{\rho:\mathbb{D}\rightarrow\mathbb{R}:\|\rho\|_{\mathcal{L}}=\sup_{z,w\in\mathbb{D},z\neq w}\frac{|\rho(z)-\rho(w)|}{|z-w|}<\infty,\ \rho\in C_{0}\right\}.$$

We define the class $\mathcal{L}_{0}$ to be the family of those $\rho\in\mathcal{L}$ with the property that for each $\varepsilon>0$, there exists a compact subset $E\subset \mathbb{D}$ such that
$|\rho(z)-\rho(w)|\leq\varepsilon|z-w|$,
whenever $z, w\in \mathbb{D}\backslash E$.

Now we give the definition of weights to be considered in this paper, that is
$$\mathcal{W}_{0}=\left\{\varphi\in C^{2}:\Delta\varphi>0, \ \text{and}\ \exists \ \rho\in \mathcal{L}_{0}\ \text{such} \ \text{that} \frac{1}{\sqrt{\Delta\varphi}}\asymp\rho\right\},$$
where $\Delta$ denotes the standard Laplace operator and the notation $a\asymp b$  indicates that there exists some positive constant $C$ such that $C^{-1}b\leq a \leq Cb$. The weight $\mathcal{W}_{0}$ covers a large class of weights, including those rapidly radial decreasing weights that are decreasing faster than any standard weight $(1-|z|^{2})^{\alpha}( \alpha>-1)$. Two classes of weights related closely to $\mathcal{W}_{0}$ are worth discussing here. One of the weights was introduced by Oleinik \cite{Oleinik1} in 1978, denoted by $\mathcal{OP}$, which has later been studied in \cite{Constantin1,Constantin2,Galanopoulos1, Arroussi1 ,Pau1,Pau2}. Another class of weight was introduced by Borichev, Dhuez and Kellay \cite{Borichev1} in 2007, denoted by $\mathcal{BDK}$, which has later been considered in \cite{Asserda1,Arroussi0,Arroussi1,Lin1,Lin2}. As stated in \cite{Hu1}, the weight $\mathcal{W}_{0}$ covers $\mathcal{BDK}$, but there is no inclusion relation between the weight $\mathcal{W}_{0}$ and $\mathcal{OP}$. It deserves to be mentioned that the weight $\mathcal{W}_{0}$ contains non-radial weights. Since spaces induced by non-radial weights vary greatly from that of radial weights, plenty of essential problems have not yet been solved. For example, polynomials may not be dense in the  Bergman spaces $A_{\varphi}^{p}$ if $\varphi$ is non-radial, hence new tools must be developed in order to better study on $A_{\varphi}^{p}$.

In what follows, we will focus on the Bergman spaces $A_{\varphi}^{p}$ induced by exponential type weights, that is those weights $w=e^{-p\varphi}$, where $\varphi\in\mathcal{W}_{0}$ and $0<p<\infty$. It turns out that Bergman spaces with exponential weights are similar in spirit to Fock spaces, and some classical techniques used in Fock spaces can achieve certain success in this paper.

It follows from \cite[Lemma 3.3]{Hu1} that there exists a reproducing kernel $K_{z}(\cdot)=K(\cdot,z)$ for $A_{\varphi}^{2}$ and $z\in\mathbb{D}$. By the Riesz representation theorem, the kernel has the reproducing property that
\begin{align*}
f(z)=\int_{\mathbb{D}}f(w)K(z,w)e^{-2\varphi(w)}dA(w)
\end{align*}
for all $f\in A_{\varphi}^{2}$.

For $z\in\mathbb{D}$, let $k_{z}=K_{z}/\left\|K_{z}\right\|_{A_{\varphi}^{2}}$ be the normalized reproducing kernels of $A_{\varphi}^{2}$. Let $D(z, r)$ denote the Euclidean disc centered at $z$ with radius $r > 0$, and for simplicity of notations, we write $D^{r}(z)$ for the disc $D(z,r\rho(z))$.
Given a positive Borel measure $\mu$ on $\mathbb{D}$ and $r>0$, the Berezin transform $\widetilde{\mu}$ and the averaging function $\widehat{\mu}_{r}$ with respect to $\mu$ are defined respectively to be
\begin{align*}
\widetilde{\mu}(z) = \int_{\mathbb{D}}\left|k_{z}(w)\right|^{2} e^{-2\varphi(w)} d \mu(w),\ z\in \mathbb{D},
\end{align*}
and
\begin{align*}
\widehat{\mu}_{r}(z)=\frac{\mu(D^{r}(z))}{\rho(z)^{2}},
\ \ z\in\mathbb{D}.
\end{align*}

For $0<p<\infty$ and a positive Borel measure $\mu$ on $\mathbb{D}$, let
$$L_{\mu}^{p}=\left\{f\ \text{Lebesgue}\ \text{measurable}:
\int_{\mathbb{D}}\left|f(z)e^{-\varphi(z)}\right|^{p}d\mu(z)<\infty\right\}.$$

The first question to be considered in this article is: When is the embedding from the Bergman space $A_{\varphi}^{p}$ to $L_{\mu}^{q}$  bounded or compact? This is also called the Carleson measure problem. Recall that for a finite positive Borel measure $\mu$ on $\mathbb{D}$ and $0<p,q<\infty$, $\mu$ is said to be a $q$-Carleson measure for $A_{\varphi}^{p}$ if the identity operator $Id:A_{\varphi}^{p}\rightarrow L_{\mu}^{q}$ is bounded. Correspondingly, $\mu$ is said to be a vanishing $q$-Carleson measure for $A_{\varphi}^{p}$ if the identity operator $Id:A_{\varphi}^{p}\rightarrow L_{\mu}^{q}$ is compact.
After  Carleson's  pioneering works \cite{Carleson1,Carleson2}, there have been a large number of researches on this problem and some characterizations  have been obtained on several spaces of analytic functions. In \cite{Duren0}, Duren obtained the necessary and sufficient condition for the finite measure $\mu$ on $\mathbb{D}$ such that the embedding from Hardy space $H^{p}$ to $L_{\mu}^{q}$ is bounded for $0<p\leq q<\infty$. In 2006, Girela and  Pel\'{a}ez \cite{Girela1} got an equivalent characterization of the positive Borel measure $\mu$ on $\mathbb{D}$ for which the Dirichlet type space $\mathcal{D}_{\alpha}^{p}\subseteq L_{\mu}^{q}$, where $0<p<q<\infty$ and $\alpha >-1$.
In the year 2010, the corresponding question was considered by  Pau and Pel\'{a}ez in \cite{Pau1}, they have completely described those positive Borel measures $\mu$ on $\mathbb{D}$ such that the the weighted Bergman space $A^{p}_{\omega}\subset L^{q}_{\mu}$, where $\omega$ belongs to $\mathcal{BDK}$ and $0<p,q<\infty$.
In 2014, Hu and Lv \cite{Hu3} characterize those positive Borel measures
 on $\mathbb{C}^{n}$ such that the embedding from the Fock space $F^{p}(\varphi)$ to  $L_{\mu}^{q}$ is bounded or compact for $0<p,q<\infty$.
Recently, Wang etc. \cite{Wang2} gave several equivalent characterizations on this problem for Fock-Type spaces.
In this paper, we consider this question on  a more general
setting of Bergman spaces, and we have obtained several equivalent conditions for this question in terms of the Berezin transform and the averaging function in section \ref{section3}.

Given a finite positive Borel measure $\mu$ on $\mathbb{D}$, Toeplitz operator $T_{\mu}$ associated with symbol $\mu$ is defined by
$$T_{\mu}f(z)=\int_{\mathbb{D}}f(w)K(z, w)e^{-2\varphi(w)}d\mu(w), \ \ z\in\mathbb{D}.$$

There have been plenty of researches about Toeplitz operators acting on different spaces of analytic functions  and the theory is especially well understood in the case of standard Fock spaces \cite{Zhu1} or Bergman spaces \cite{Zhu2}.
Luecking \cite{Luecking2} was probably the pioneer to study Toeplitz operators $T_{\mu}$ with measures as symbols, and the study of Toeplitz operators  on  Bergman spaces with exponential weights was initiated by Lin and Rochberg \cite{Lin2} in 1996.

The second question we are interested in is:
Under what conditions for the finite positive Borel measure $\mu$  can the induced Toeplitz operators $T_{\mu}$ be bounded or compact from one Bergman space $A_{\varphi}^{p}$ to another $A_{\varphi}^{q}$? This question has been extensively studied on Fock spaces by authors \cite{Wang1,Wang2,Hu2,Hu3,Hu4}, but less research on  Bergman spaces. In \cite[Chapter 7]{Zhu1}, bounded or compact Toeplitz operators $T_{\mu}$ have been characterized in terms of the Berezin transform and Carleson measures  on the standard Bergman space $A_{\alpha}^{2}$ $(\alpha>-1)$.
In 2016,  Pel\'{a}ez etc. \cite{Pelaez1} provided characterizations of the bounded and compact Toeplitz operators $T_{\mu}$ between different Bergman spaces $A^{p}_{\omega}$, $A^{q}_{\omega}$ induced by regular weights $\omega$ in terms of Carleson measures and the Berezin transform for $1<p,q <\infty$.
The main purpose of this paper is to characterize those positive Borel measures for which the induced Toeplitz operators $T_{\mu}$ are bounded or compact from $A_{\varphi}^{p}$ to $A_{\varphi}^{q}$ for all possible $0<p,q<\infty$. %The difficulty lies in  dealing with the case of $0 < p < 1$, since $A_{\varphi}^{p}$ (or $A_{\varphi}^{q}$) is no longer a Banach space in this case.
We accomplish this work by using the tools of the Berezin transform and the averaging function in section \ref{section4}.

The final question we concern about is: When does the Toeplitz operator $T_{\mu}$ belong to the Schatten $p$-class  $\mathcal{S}_{p}(A_{\varphi}^{2})$?
This question was first considered by Luecking \cite{Luecking2} on the standard Bergman spaces $A^{2}_{\alpha}$, and one
can also  find in Zhu's book \cite{Zhu1}. In 2015, Arroussi etc. \cite{Arroussi1} considered the same problem and gave descriptions of membership in the Schatten $p$-class $\mathcal{S}_{p}(A_{\omega}^{2})$, where the weight $\omega$ belongs to $\mathcal{OP}$.
Our characterizations on this question are shown in section \ref{section5}.

Throughout this manuscript, we let $C$ denote a positive constant whose exact value may change from one occurence to another but do not depend on the variables being considered.
We use the notation $a\lesssim b$ to indicate that there is a constant $C > 0$ such that $a\leq Cb$. Similarly for the notation $\gtrsim$.  For each $1\leq p<\infty$, we let $p^{\prime}$ for its conjugate exponent, that is, $1/p+1/p^{\prime}=1$.
Besides, in the proof of our main theorems, we will always assume that the positive  Borel measure $\mu$ satisfies $\int_{\mathbb{D}}d\mu(z)<\infty$.
\section{Preliminaries}
In this section, we are going to present some basic conclusions that will be used in the following chapters. We start with a classical covering lemma, which is stated by Hu etc. in \cite{Hu1}, and a sequence $\{w_{k}\}$ satisfying $(a)-(c)$ is called a $(\rho, r)$-lattice.
% but we will give a proof in this paper for the sake of completeness.
\begin{lemma}\label{latticelemma}
If $\rho\in\mathcal{L}$ is positive, then there exist positive constants $\alpha$ and $s$, depending only on
$\|\rho\|_{\mathcal{L}}$, such that for $0<r\leq\alpha$ there is a sequence $\{w_{k}\}\subset\mathbb{D}$ satisfying\\
(a) $\mathbb{D}=\cup_{k}D^{r}(w_{k})$;\\
(b) $D^{sr}(w_{k})\cap D^{sr}(w_{j})=\emptyset$ for $k\neq j$;\\
(c) $\{D^{2\alpha}(w_{k})\}_{k}$ is a covering of $\mathbb{D}$ of finite multiplicity $N$.
\end{lemma}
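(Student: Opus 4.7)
The plan is a standard maximal-packing argument, using the Lipschitz regularity of $\rho$ to guarantee that $\rho$ is essentially constant on each disc $D^{r}(z)$ when $r$ is small. The key preliminary observation is the following: since $\rho\in\mathcal{L}$, for any $z\in\mathbb{D}$ and any $w$ with $|w-z|<r\rho(z)$ we have
$$|\rho(w)-\rho(z)|\leq\|\rho\|_{\mathcal{L}}|w-z|<r\|\rho\|_{\mathcal{L}}\rho(z).$$
Choosing $\alpha>0$ small enough that $\alpha\|\rho\|_{\mathcal{L}}$ is (say) at most some fixed fraction of $1$, this estimate yields $\rho(w)\asymp\rho(z)$ on $D^{r}(z)$ for every $r\leq\alpha$, with comparability constants depending only on $\|\rho\|_{\mathcal{L}}$.

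Fix such $\alpha$, fix a small $s>0$ to be specified, and for any $r\in(0,\alpha]$ invoke Zorn's lemma to extract a maximal family $\{w_{k}\}\subset\mathbb{D}$ with the property that the discs $\{D^{sr}(w_{k})\}$ are pairwise disjoint; this immediately gives (b). For (a), let $z\in\mathbb{D}$. Maximality forces $D^{sr}(z)\cap D^{sr}(w_{k})\neq\emptyset$ for some $k$, so $|z-w_{k}|<sr(\rho(z)+\rho(w_{k}))$. Feeding this inequality back into the Lipschitz estimate above shows $\rho(z)\asymp\rho(w_{k})$, and then choosing $s$ small enough relative to $\|\rho\|_{\mathcal{L}}$ and the previous comparability constants yields $|z-w_{k}|<r\rho(w_{k})$, i.e.\ $z\in D^{r}(w_{k})$. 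Note that $s$ is selected once and for all and depends only on $\|\rho\|_{\mathcal{L}}$.

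For (c), fix $z\in\mathbb{D}$ and count the indices $k$ with $z\in D^{2\alpha}(w_{k})$. For each such $k$ we have $|z-w_{k}|<2\alpha\rho(w_{k})$; the Lipschitz bound then forces $\rho(w_{k})\asymp\rho(z)$ uniformly, and so $w_{k}\in D(z,C\rho(z))$ for a constant $C$ that depends only on $\|\rho\|_{\mathcal{L}}$. The packing discs $\{D^{sr}(w_{k})\}$ from (b) are pairwise disjoint, each of Euclidean area comparable to $(sr\rho(z))^{2}$, and all sit inside an enlarged Euclidean disc around $z$ of area comparable to $\rho(z)^{2}$. A direct area comparison bounds their number by $N\lesssim(sr)^{-2}$, which is finite for each fixed $r$.

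The main obstacle is bookkeeping the three independent scales $\alpha$, $s$, $r$ in the Lipschitz estimates of Step 2, so that a single pair $(\alpha,s)$ depending only on $\|\rho\|_{\mathcal{L}}$ makes (a), (b), and (c) hold simultaneously for every $r\in(0,\alpha]$; once the inequalities $\alpha\|\rho\|_{\mathcal{L}}\ll 1$ and $s\ll 1$ are committed to, the remainder is routine area counting. It should be noted that the multiplicity $N$ is finite but genuinely depends on $r$ (it grows like $r^{-2}$ as $r\to 0$), while $\alpha$ and $s$ remain universal for the class.
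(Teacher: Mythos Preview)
Your proof is correct and follows the standard maximal-packing argument. The paper does not supply its own proof of this lemma; it is quoted directly from \cite{Hu1}, and your approach (Lipschitz control to make $\rho$ essentially constant on $D^{2\alpha}(\cdot)$, a maximal $sr$-separated family for (b), maximality plus a suitable choice of $s$ for (a), and an area count for (c)) is precisely the expected one. Your closing remark that $N$ depends on $r$ like $(sr)^{-2}$ is accurate and consistent with how the lemma is invoked later in the paper, where $N$ is treated as a constant attached to a fixed lattice parameter.
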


From now on, we will assume that $\alpha$ is chosen such that the conclusions of  Lemma \ref{latticelemma} and \cite [Lemma 3.1]{Hu1} are valid for it.

The following Lemma from \cite [Lemma 3.3]{Hu1} plays an essential role in proving our main results and can be regarded as one type of generalized sub-mean property of $|fe^{-\varphi}|^{p}$ which describes the boundedness of the point evaluation functionals on $A_{\varphi}^{p}$.

\begin{lemma}\label{submeanlemma}
Suppose $\varphi\in\mathcal{W}_{0}$ with $\frac{1}{\sqrt{\Delta\varphi}}\asymp\rho\in\mathcal{L}_{0}$ and $0 <p<\infty$. Then there exist positive constants $\alpha$ and $C$ such that the following holds
\begin{align}
\left|f(z)e^{-\varphi(z)}\right|^{p}\leq C\frac{1}{\rho(z)^{2}}\int_{D^{r}(z)}\left|f(w)e^{-\varphi(w)}\right|^{p}dA(w)
\end{align}
for $r\in(0, \alpha]$ and $f\in H(\mathbb{D})$.
\end{lemma}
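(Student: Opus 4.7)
The target inequality is the standard sub-mean value property for $|fe^{-\varphi}|^p$ on discs of hyperbolic-type radius $r\rho(z)$. The plan is to combine the usual sub-harmonicity of $|f|^p$ with a local oscillation estimate for $\varphi$ on $D^r(z)$; the latter is where all the work lies and where the hypothesis $\varphi\in\mathcal W_0$ is used.

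First, since $f\in H(\mathbb D)$, the function $|f|^p$ is subharmonic on $\mathbb D$, so the classical sub-mean value inequality applied to the Euclidean disc $D^r(z)=D(z,r\rho(z))$ yields
\begin{align*}
|f(z)|^p \;\le\; \frac{1}{\pi (r\rho(z))^2}\int_{D^r(z)}|f(w)|^p\,dA(w).
\end{align*}
Multiplying both sides by $e^{-p\varphi(z)}$ reduces the whole lemma to the pointwise comparability
\begin{align*}
e^{-p\varphi(z)}\;\le\; C\,e^{-p\varphi(w)}\qquad\text{for all } w\in D^r(z),
\end{align*}
with a constant $C$ independent of $z$ (and $r\in(0,\alpha]$ for $\alpha$ sufficiently small).

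To prove that oscillation estimate, which is the main obstacle, I would work with the rescaled function $u(\zeta)=\varphi(z+\rho(z)\zeta)$ on $|\zeta|<r$. Because $\rho\in\mathcal L_0\subset\mathcal L$ is Lipschitz, one has $\rho(w)\asymp\rho(z)$ throughout $D^r(z)$ provided $r$ is at most some $\alpha=\alpha(\|\rho\|_{\mathcal L})$; consequently $\Delta u(\zeta)=\rho(z)^2\,\Delta\varphi(z+\rho(z)\zeta)\asymp 1$ on the unit-scale disc $|\zeta|<r$. Writing $u=h+v$ where $v$ is the logarithmic potential of $\Delta u$ on $|\zeta|<\alpha$ and $h$ is harmonic, one gets $|v(\zeta)-v(0)|\le C$ from the bound on $\Delta u$, and a gradient estimate for harmonic functions (or the mean-value identity applied on a slightly larger concentric disc, combined with $\int|h|\lesssim \int |u|+\int|v|$) gives $|h(\zeta)-h(0)|\le C$ on $|\zeta|<r\le\alpha$. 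Undoing the scaling produces $|\varphi(w)-\varphi(z)|\le C$ on $D^r(z)$, which is exactly the needed comparability.

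Combining the two steps gives
\begin{align*}
|f(z)e^{-\varphi(z)}|^p \;\le\; \frac{C}{(r\rho(z))^2}\int_{D^r(z)}|f(w)e^{-\varphi(w)}|^p\,dA(w),
\end{align*}
and absorbing the $r$-factor into the constant (allowed since $r$ ranges in the bounded interval $(0,\alpha]$ when the lemma is applied in the paper) yields the displayed form with $C/\rho(z)^2$. I expect the local oscillation estimate for $\varphi$ to be the only nontrivial ingredient; this is precisely the type of control that the definition of $\mathcal W_0$ (requiring $1/\sqrt{\Delta\varphi}\asymp\rho$ with $\rho$ Lipschitz) was designed to provide, and it is the reason $\alpha$ must be taken small enough depending on $\|\rho\|_{\mathcal L}$.
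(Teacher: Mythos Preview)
The paper does not prove this lemma; it simply quotes \cite[Lemma~3.3]{Hu1}. So there is no in-paper argument to compare against. That said, your proposal has a genuine gap.

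The step that fails is the oscillation bound $|\varphi(w)-\varphi(z)|\le C$ on $D^r(z)$. Controlling $\Delta\varphi$ on the rescaled disc gives you a bound on the potential part $v$, but it gives you nothing on the harmonic part $h$: in your sketch you appeal to $\int|h|\lesssim\int|u|+\int|v|$, yet $\int|u|=\int|\varphi(z+\rho(z)\,\cdot)|$ is not controlled by the hypotheses --- $\varphi$ can be arbitrarily large. In fact the oscillation estimate is false in $\mathcal{W}_0$: if $\varphi_0\in\mathcal{W}_0$ and $H\in H(\mathbb{D})$, then $\varphi=\varphi_0+\mathrm{Re}\,H$ has the same Laplacian, hence lies in $\mathcal{W}_0$ with the same $\rho$, but by choosing $H$ with $\rho(z)|H'(z)|\to\infty$ as $|z|\to 1^-$ one makes the oscillation of $\varphi$ on $D^r(z)$ blow up. So the intermediate claim you reduce to is simply not true.

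The fix is exactly the decomposition you wrote down, used differently: on $D^{\alpha}(z)$ write $\varphi=h+v$ with $h$ harmonic and $v$ the logarithmic potential of $\Delta\varphi$; your scaling argument does show $|v|\le C$ uniformly. Now, instead of trying to bound $h$, use that $D^{\alpha}(z)$ is simply connected to write $h=\mathrm{Re}\,H_z$ with $H_z$ holomorphic, and apply the sub-mean-value inequality to the subharmonic function $|fe^{-H_z}|^{p}$. Since $|e^{-\varphi}|=|e^{-H_z}|e^{-v}$ and $|v|\le C$, this immediately gives
\[
\bigl|f(z)e^{-\varphi(z)}\bigr|^{p}\;\lesssim\;\frac{1}{r^{2}\rho(z)^{2}}\int_{D^{r}(z)}\bigl|f(w)e^{-\varphi(w)}\bigr|^{p}\,dA(w),
\]
which is the desired inequality. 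The point is that the harmonic part must be \emph{absorbed into the analytic function}, not estimated away; this is precisely how the argument in \cite{Hu1} proceeds.
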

In order to give the pointwise estimate of the Bergman kernels, it is necessary to consider the distance $d_{\rho}$ induced by $\rho\in\mathcal{L}_{0}$, which is given by
$$d_{\rho}(z,w)=\inf_{\gamma}\int_{0}^{1}|
\gamma^{\prime}(t)|\frac{dt}{\rho(\gamma(t))},\ \ z,\ w\in\mathbb{D},$$
where the infimum is taken over all piecewise $C^{1}$ curves $\gamma:[0, 1]\rightarrow\mathbb{D}$ with $\gamma(0)=z$ and $\gamma(1)=w$.

%For $z\in\mathbb{D}$ and $r>0$, we define the disks as follows:
%$$B_{\rho}(z,r)=\{w\in\mathbb{D}:d_{\rho}(z,w)<r\},$$

The next lemma gives the upper bound estimate of the reproducing kernel $K(z,w)$ for all $z,w\in\mathbb{D}$ and the lower bound estimate of $K(z,w)$ near the diagonal, one can see \cite [Theorem 3.2]{Hu1} for the detailed proof.
\begin{lemma}\label{kernelestimate1}
Let $\varphi\in\mathcal{W}_{0}$ with $\frac{1}{\sqrt{\Delta\varphi}}\asymp\rho\in\mathcal{L}_{0}$. There exist positive constants $C_{1}, C_{2}, \sigma$ such that
\begin{align}\label{ke11}
\left|K(z,w)\right|\leq C_{1}\frac{e^{\varphi(z)+\varphi(w)}}{\rho(z)\rho(w)}e^{-\sigma d_{\rho}(z,w)},\ \ z,\ w\in\mathbb{D},
\end{align}
and
\begin{align}\label{ke12}
|K(z,w)|\geq C_{2}\frac{e^{\varphi(z)+\varphi(w)}}{\rho(z)\rho(w)}, \ \ w\in D^{\alpha}(z).
\end{align}
\end{lemma}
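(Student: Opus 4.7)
My plan is to first secure the two diagonal ingredients and then promote them to the full statement, with the off-diagonal decay being the technically serious step.

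First I would establish the norm asymptotics $\|K_z\|_{A_\varphi^2} \asymp e^{\varphi(z)}/\rho(z)$. The upper bound $\|K_z\|_{A_\varphi^2}^2 = K(z,z) \lesssim e^{2\varphi(z)}/\rho(z)^2$ follows by applying Lemma \ref{submeanlemma} with $p=2$ to $f=K_z$ and invoking the reproducing property, which reduces $\int_{D^r(z)}|K_z(w)|^2 e^{-2\varphi(w)}dA(w)$ to $K(z,z)$ times a factor controlled by $\rho(z)^{-2}$; a direct manipulation separates the two occurrences. For the matching lower bound one constructs a normalized holomorphic test function $f_z$ peaking at $z$, with $|f_z(z)|e^{-\varphi(z)} \gtrsim 1/\rho(z)$ and $\|f_z\|_{A_\varphi^2}\lesssim 1$; such peak functions can be produced by solving a $\bar\partial$-equation with Hörmander estimates against the plurisubharmonic weight $2\varphi$ (which is admissible since $\Delta\varphi \asymp \rho^{-2}>0$), correcting a cut-off of the constant function on $D^r(z)$. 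Testing the reproducing formula on $f_z$ then forces $\|K_z\|_{A_\varphi^2} \gtrsim e^{\varphi(z)}/\rho(z)$, yielding $K(z,z)\asymp e^{2\varphi(z)}/\rho(z)^2$.

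The coarse upper bound \eqref{ke11} without the exponential decay factor is then immediate: by Cauchy-Schwarz and the reproducing property, $|K(z,w)| \leq \|K_z\|_{A_\varphi^2}\|K_w\|_{A_\varphi^2} \asymp e^{\varphi(z)+\varphi(w)}/(\rho(z)\rho(w))$. The lower bound \eqref{ke12} near the diagonal follows from the diagonal estimate and a continuity argument: the map $w\mapsto K(z,w)e^{-\varphi(z)-\varphi(w)}\rho(z)\rho(w)$ is, up to the analytic dependence on $w$ and the slow variation of $\varphi$ and $\rho$ on the scale $\rho(z)$ (Lemma 3.1 of \cite{Hu1}), essentially constant on $D^\alpha(z)$. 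Shrinking $\alpha$ if necessary, one deduces $|K(z,w)|\geq \tfrac12 K(z,z)e^{-\varphi(z)+\varphi(w)}\rho(z)/\rho(w)\cdot(\ldots)\gtrsim e^{\varphi(z)+\varphi(w)}/(\rho(z)\rho(w))$ on $D^\alpha(z)$, which is \eqref{ke12}.

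The main obstacle is the off-diagonal exponential decay $e^{-\sigma d_\rho(z,w)}$. My approach would follow the standard Christ/Lindholm-type route using Hörmander's $L^2$-estimates, adapted to the metric $d_\rho$: fix $z\in\mathbb{D}$ and a smooth cut-off $\chi$ equal to $1$ on $D^{r}(z)$ and supported in a slightly larger disc, and look for a holomorphic function of the form $F_z(w)=\chi(w)K(z,w)e^{\sigma d_\rho(z,w)} - u(w)$ that solves $\bar\partial u = \bar\partial(\chi K_z e^{\sigma d_\rho(z,\cdot)})$ with $L^2$-norm controlled relative to the weight $e^{-2\varphi}$. The point is that $\sigma d_\rho(z,\cdot)$ is a Lipschitz function in the metric induced by $\rho$, so for $\sigma$ small enough the weight $2\varphi - 2\sigma d_\rho(z,\cdot)$ is still strictly subharmonic with $\Delta\geq c/\rho^2$, allowing Hörmander's theorem to supply $u$ with the required bound. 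Using the reproducing property against $F_z$ (or estimating $|F_z(w)|$ pointwise via Lemma \ref{submeanlemma}) then transfers the decay factor $e^{-\sigma d_\rho(z,w)}$ into the bound on $|K(z,w)|$, completing \eqref{ke11}. The delicate point is verifying the required regularity of $d_\rho(z,\cdot)$ — specifically that its gradient, in the $\rho$-weighted sense, is bounded — so that the perturbed weight remains in $\mathcal{W}_0$; this is precisely the place where the hypothesis $\rho\in\mathcal{L}_0$ is consumed.
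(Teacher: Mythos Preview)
The paper does not give a proof of this lemma: it simply quotes the result from \cite[Theorem 3.2]{Hu1}. Your outline --- diagonal asymptotics via Lemma~\ref{submeanlemma} and H\"ormander-constructed peak functions, Cauchy--Schwarz for the coarse upper bound, and a Christ/Lindholm-type weighted $\bar\partial$-argument for the exponential off-diagonal decay --- is indeed the strategy used in \cite{Hu1} and the surrounding literature, so in spirit you are reproducing the cited proof rather than offering an alternative.

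There is, however, a genuine gap in your off-diagonal step. You propose to correct $\chi K_{z}e^{\sigma d_{\rho}(z,\cdot)}$ to a holomorphic $F_{z}$ by solving $\bar\partial u=\bar\partial(\chi K_{z}e^{\sigma d_{\rho}(z,\cdot)})$. But outside the support of $\chi$ one then has $F_{z}(w)=-u(w)$, so $F_{z}$ carries no information about $K(z,w)$ for $w$ far from $z$; the construction simply does not yield \eqref{ke11}. Moreover, $\bar\partial(\chi K_{z}e^{\sigma d_{\rho}})$ contains the non-localized term $\sigma\chi K_{z}e^{\sigma d_{\rho}}\bar\partial d_{\rho}$, and you simultaneously place $e^{\sigma d_{\rho}}$ in the function and $e^{-2\sigma d_{\rho}}$ in the H\"ormander weight, which double-counts and does not give a useful bound. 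The correct mechanism keeps $K_{z}$ as the function and moves the exponential entirely into the weight: one shows that the Bergman projection $P$ for $A_{\varphi}^{2}$ remains bounded on $L^{2}(e^{-2\varphi+2\sigma d_{\rho}(z,\cdot)}dA)$ uniformly in $z$ (this is where H\"ormander with the perturbed subharmonic weight $\varphi\pm\sigma d_{\rho}(z,\cdot)$ enters, together with the minimality of the canonical $\bar\partial$-solution), and then applies this to a function supported in $D^{r}(z)$ reproducing $K_{z}$ to obtain
\[
\int_{\mathbb{D}}|K(z,w)|^{2}e^{-2\varphi(w)}e^{2\sigma d_{\rho}(z,w)}\,dA(w)\lesssim \frac{e^{2\varphi(z)}}{\rho(z)^{2}},
\]
from which \eqref{ke11} follows by Lemma~\ref{submeanlemma}. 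Your remark that the Lipschitz regularity of $d_{\rho}(z,\cdot)$ in the $\rho$-metric is the delicate point is correct, and it is exactly here that the hypothesis $\rho\in\mathcal{L}_{0}$ is used in \cite{Hu1}; but the way you package the argument needs to be reworked along the lines above.
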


The next results will be frequently used in this paper, which gives the asymptotic estimates for the $A_{\varphi}^{p}$-norm of the (normalized) reproducing kernel, see \cite [Corollary 3.2]{Hu1}.
\begin{lemma}\label{kernelestimate2}
Let $\varphi\in\mathcal{W}_{0}$ with $\frac{1}{\sqrt{\Delta\varphi}}\asymp\rho\in\mathcal{L}_{0}$ and $0<p\leq\infty$. Then for $z\in\mathbb{D}$, we have
\begin{align}\label{ke21}
\|K_{z}\|_{A_{\varphi}^{p}}\asymp e^{\varphi(z)}\rho(z)^{\frac{2}{p}-2},
\end{align}
and
\begin{align}\label{ke22}
\|k_{z}\|_{A_{\varphi}^{p}}\asymp \rho(z)^{\frac{2}{p}-1}.
\end{align}
\end{lemma}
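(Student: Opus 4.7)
The plan is to deduce both estimates directly from the pointwise kernel bounds in Lemma \ref{kernelestimate1}, together with the fact that $\rho\in\mathcal{L}_{0}$ is approximately constant on the small discs $D^{r}(z)$. I will first prove \eqref{ke21} for $0<p<\infty$, then recover \eqref{ke22} as a ratio, and finally indicate the easy modification for $p=\infty$.

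For the upper bound of $\|K_{z}\|_{A_{\varphi}^{p}}$, I plug the estimate \eqref{ke11} into the defining integral to obtain
\begin{align*}
\|K_{z}\|_{A_{\varphi}^{p}}^{p}
=\int_{\mathbb{D}}|K(w,z)|^{p}e^{-p\varphi(w)}dA(w)
\leq C_{1}^{p}\,\frac{e^{p\varphi(z)}}{\rho(z)^{p}}
\int_{\mathbb{D}}\rho(w)^{-p}e^{-p\sigma d_{\rho}(z,w)}dA(w).
\end{align*}
Everything therefore reduces to the auxiliary estimate
\begin{align*}
I_{p}(z):=\int_{\mathbb{D}}\rho(w)^{-p}e^{-p\sigma d_{\rho}(z,w)}dA(w)\asymp \rho(z)^{2-p}.
\end{align*}
To prove this, I would fix an $r\le\alpha$, take a $(\rho,r)$-lattice $\{w_{k}\}$ as in Lemma \ref{latticelemma}, and split the integral into the pieces $D^{r}(w_{k})$. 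On each such piece $\rho(w)\asymp\rho(w_{k})$ (this is the standard consequence of $\rho\in\mathcal L$), and by the triangle inequality for $d_{\rho}$ one has $d_{\rho}(z,w)\ge d_{\rho}(z,w_{k})-C$. Since each disc has area comparable to $\rho(w_{k})^{2}$, the integral is dominated by $\sum_{k}\rho(w_{k})^{2-p}e^{-p\sigma d_{\rho}(z,w_{k})}$. Grouping the lattice points in annuli $\{k:n\le d_{\rho}(z,w_{k})<n+1\}$ and noting that such an annulus has cardinality comparable to $(n+1)$ (a standard consequence of the finite multiplicity property (c) together with the geometry of the $d_{\rho}$-metric), the sum behaves like $\rho(z)^{2-p}\sum_{n\ge 0}(n+1)e^{-p\sigma n}\asymp\rho(z)^{2-p}$, where the leading order is provided by the terms closest to $z$. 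Combined with the previous display this gives $\|K_{z}\|_{A^{p}_{\varphi}}\lesssim e^{\varphi(z)}\rho(z)^{2/p-2}$.

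For the matching lower bound I restrict integration to $D^{\alpha}(z)$ and apply \eqref{ke12}. Since $\rho\in\mathcal L$, one has $\rho(w)\asymp\rho(z)$ on $D^{\alpha}(z)$, and by Lemma \ref{submeanlemma} (or just by the continuity of $\varphi$ on such small discs, which follows from $\frac{1}{\sqrt{\Delta\varphi}}\asymp\rho$) one also has $\varphi(w)=\varphi(z)+O(1)$ there. Thus
\begin{align*}
\|K_{z}\|_{A_{\varphi}^{p}}^{p}
\ge\int_{D^{\alpha}(z)}|K(w,z)|^{p}e^{-p\varphi(w)}dA(w)
\gtrsim \frac{e^{p\varphi(z)}}{\rho(z)^{2p}}\cdot\rho(z)^{2}
= e^{p\varphi(z)}\rho(z)^{2-2p},
\end{align*}
which is exactly \eqref{ke21}. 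Taking $p=2$ gives $\|K_{z}\|_{A_{\varphi}^{2}}\asymp e^{\varphi(z)}\rho(z)^{-1}$, and dividing \eqref{ke21} by this yields \eqref{ke22}. For the case $p=\infty$ the upper bound is immediate from \eqref{ke11} once one checks $\rho(z)\lesssim\rho(w)e^{\sigma d_{\rho}(z,w)}$ (a direct consequence of $\rho\in\mathcal L$ and the definition of $d_{\rho}$), while the lower bound comes from evaluating $|K(w,z)|e^{-\varphi(w)}$ at $w=z$ using \eqref{ke12}.

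The main obstacle is the integral estimate $I_{p}(z)\asymp\rho(z)^{2-p}$: one must carefully control the geodesic metric $d_{\rho}$ through the lattice decomposition and verify that the annular counting estimate holds uniformly in $z$. Once that auxiliary lemma is in hand, everything else is essentially bookkeeping.
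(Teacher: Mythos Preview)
The paper does not give its own proof of this lemma; it simply cites \cite[Corollary 3.2]{Hu1}. Your overall strategy---use the pointwise upper bound \eqref{ke11} together with an integral estimate for $I_{p}(z)$, and the near-diagonal lower bound \eqref{ke12}---is exactly the right one, and the lower bound and the $p=2$ division step are fine (note that for the lower bound you do not need $\varphi(w)=\varphi(z)+O(1)$: the factor $e^{\varphi(w)}$ in \eqref{ke12} cancels directly against $e^{-\varphi(w)}$ in the norm).

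The genuine gap is your proof of the key integral estimate
\[
I_{p}(z)=\int_{\mathbb{D}}\rho(w)^{-p}e^{-p\sigma d_{\rho}(z,w)}\,dA(w)\lesssim \rho(z)^{2-p}.
\]
Your annular counting claim---that the number of lattice points with $n\le d_{\rho}(z,w_{k})<n+1$ is comparable to $(n+1)$---is not a consequence of the finite multiplicity property (c) of Lemma \ref{latticelemma}, and is in fact false for the metric $d_{\rho}$. Property (c) is purely local; it says nothing about the growth of $d_{\rho}$-balls. Because $\rho\to 0$ at $\partial\mathbb{D}$, the metric $d_{\rho}$ is of hyperbolic type and the cardinality of such annuli typically grows at least polynomially (and can grow exponentially) in $n$. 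You also implicitly replace $\rho(w_{k})^{2-p}$ by $\rho(z)^{2-p}$ when you factor out the leading term, but $\rho(w_{k})$ is only comparable to $\rho(z)$ for lattice points near $z$; in the far annuli this factor varies wildly. So neither ingredient of the sum $\rho(z)^{2-p}\sum_{n}(n+1)e^{-p\sigma n}$ is justified. The same issue resurfaces in your $p=\infty$ argument: the inequality $\rho(z)\lesssim\rho(w)e^{\sigma d_{\rho}(z,w)}$ follows from $\rho\in\mathcal{L}$ only with $\sigma$ replaced by $\|\rho\|_{\mathcal{L}}$, which need not be $\le\sigma$.

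The estimate you need is precisely \cite[Corollary 3.1]{Hu1}, which the present paper invokes repeatedly (see e.g.\ the proofs of Propositions \ref{Berezinfunctioncompress} and \ref{welldefined}). Its proof genuinely uses the $\mathcal{L}_{0}$ hypothesis (not just $\mathcal{L}$): along any path one has $\big|\frac{d}{dt}\log\rho(\gamma(t))\big|\le \varepsilon\,|\gamma'(t)|/\rho(\gamma(t))$ outside a compact set, so that $\rho(w)^{\pm1}\le \rho(z)^{\pm1}e^{\varepsilon d_{\rho}(z,w)}$ for any prescribed $\varepsilon>0$, which is what lets the exponential decay $e^{-p\sigma d_{\rho}}$ absorb both the weight $\rho(w)^{-p}$ and the volume growth. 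You should either cite \cite[Corollary 3.1]{Hu1} for $I_{p}(z)$ or reproduce an argument along these lines; the rest of your outline then goes through.
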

By using the duality theory and the estimate for the $A_{\varphi}^{p}$-norm of the reproducing kernel $K_{z}$, we can obtain the so-called partial atomic decomposition of $A_{\varphi}^{p}$ with $0<p<\infty$. For $z\in \mathbb{D}$, we shall use the notation $k_{p,z}$ to denote the normalized reproducing kernel for $A_{\varphi}^{p}$, which is given by $k_{p,z}=K_{z}/\|K_{z}\|_{A_{\varphi}^{p}}$.

\begin{proposition}\label{atomicdecomposition}
Let $0<p<\infty$ and $\varphi\in\mathcal{W}_{0}$ with $\frac{1}{\sqrt{\Delta\varphi}}\asymp\rho\in\mathcal{L}_{0}$. If $\{w_{k}\}$ is a $(\rho, r)$-lattice, then the function
\begin{align*}
F(z)=\sum_{k}c_{k}k_{p,w_{k}}(z)
\end{align*}
belongs to $A_{\varphi}^{p}$ for any sequence $\{c_{k}\}\in l^{p}$.
Moreover, we have
\begin{align}\label{atomicdecomposition1}
\|F\|_{A_{\varphi}^{p}}\lesssim\|\{c_{k}\}\|_{l^{p}}.
\end{align}
\end{proposition}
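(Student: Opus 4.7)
The plan is to separate the argument into two regimes based on $p$.

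For $0 < p \leq 1$, the $p$-subadditivity $|\sum_k a_k|^p \leq \sum_k |a_k|^p$ gives the estimate essentially for free: integrating
\begin{align*}
|F(z)e^{-\varphi(z)}|^p \leq \sum_k |c_k|^p |k_{p,w_k}(z) e^{-\varphi(z)}|^p
\end{align*}
over $\mathbb{D}$ and using that each $k_{p,w_k}$ is a unit vector in $A_\varphi^p$ by construction yields $\|F\|_{A_\varphi^p}^p \leq \sum_k |c_k|^p$ directly.

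For $1 < p < \infty$, I would argue by duality, first for the finite partial sums $F_N = \sum_{k=1}^N c_k k_{p,w_k}$. For a test function $g$ in the unit ball of $A_\varphi^{p'}$ under the pairing $\langle f, g\rangle = \int_\mathbb{D} f\overline{g}e^{-2\varphi}dA$, the reproducing property yields $\langle k_{p,w_k}, g\rangle = \overline{g(w_k)}/\|K_{w_k}\|_{A_\varphi^p}$, and Hölder's inequality in $k$ produces
\begin{align*}
|\langle F_N, g\rangle| \leq \|\{c_k\}\|_{l^p} \left(\sum_{k=1}^N \frac{|g(w_k)|^{p'}}{\|K_{w_k}\|_{A_\varphi^p}^{p'}}\right)^{1/p'}.
\end{align*}
Substituting the norm estimate from Lemma \ref{kernelestimate2} (noting $2/p' = 2 - 2/p$) rewrites the summand as $|g(w_k) e^{-\varphi(w_k)}|^{p'} \rho(w_k)^2$; the sub-mean value property (Lemma \ref{submeanlemma}) together with the essential constancy of $\rho$ on $D^r(w_k)$ bounds this by $\int_{D^r(w_k)} |g e^{-\varphi}|^{p'} dA$; and the finite multiplicity part (c) of Lemma \ref{latticelemma} assembles these terms to $\lesssim \|g\|_{A_\varphi^{p'}}^{p'}$. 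Taking the supremum over $g$ and invoking the duality $(A_\varphi^p)^* \cong A_\varphi^{p'}$ yields $\|F_N\|_{A_\varphi^p} \lesssim \|\{c_k\}\|_{l^p}$ with a constant independent of $N$.

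The main obstacle is the duality step itself: identifying $(A_\varphi^p)^* \cong A_\varphi^{p'}$ via the weighted pairing rests, in this general exponential-weight setting, on the $L^p(e^{-p\varphi}dA)$-boundedness of the Bergman projection, which is available from \cite{Hu1}. The proposition is then completed by applying the same estimate to the tails $F_N - F_M = \sum_{k=M+1}^N c_k k_{p,w_k}$ to conclude that $\{F_N\}$ is Cauchy in $A_\varphi^p$; by Lemma \ref{submeanlemma} the convergence is uniform on compact subsets of $\mathbb{D}$, so the limit $F$ is analytic and lies in $A_\varphi^p$ with $\|F\|_{A_\varphi^p} \lesssim \|\{c_k\}\|_{l^p}$.
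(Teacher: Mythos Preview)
Your proposal is correct. For $0<p\leq 1$ it matches the paper verbatim. For $p>1$ you take a genuinely different route: the paper argues \emph{directly} via a Schur-type H\"older split,
\[
\|F\|_{A_\varphi^p}^p \lesssim \int_{\mathbb{D}}\Bigl(\sum_k |c_k|^p e^{-\varphi(w_k)}|K_{w_k}(z)|\Bigr)\,M(z)^{p-1}\,e^{-p\varphi(z)}\,dA(z),
\]
with $M(z)=\sum_k \rho(w_k)^2 e^{-\varphi(w_k)}|K_{w_k}(z)|$, and then shows $M(z)\lesssim e^{\varphi(z)}$ from the sub-mean property, the lattice covering, and the $A_\varphi^1$-kernel bound $\|K_z\|_{A_\varphi^1}\asymp e^{\varphi(z)}$ of Lemma~\ref{kernelestimate2}. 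You instead pair $F_N$ against $g\in A_\varphi^{p'}$, use the reproducing formula and H\"older in $k$, and discretize $\|g\|_{A_\varphi^{p'}}$ via Lemma~\ref{submeanlemma} and the finite multiplicity of the lattice.

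The trade-off: the paper's direct estimate is more self-contained, needing only the pointwise kernel bounds already in hand, whereas your duality argument imports the identification $(A_\varphi^p)^*\cong A_\varphi^{p'}$, which in this setting ultimately rests on the $L^p$-boundedness of the Bergman projection from \cite{Hu1}. On the other hand, your route is structurally cleaner, and you make the convergence of the series explicit through the Cauchy-tail argument, a point the paper leaves implicit.
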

\begin{proof}
If $0<p\leq1$, by using the following fact
\begin{align}\label{fact0}
\left(\sum_{k=1}^{\infty}a_{k}\right)^{\theta}\leq\sum_{k=1}^{\infty}a_{k}^{\theta},\ a_{k}>0,\ 0<\theta\leq1,
\end{align}
it is not difficult to see
\begin{align}\label{atomicdecomposition2}
\|F\|_{A_{\varphi}^{p}}^{p}=\int_{\mathbb{D}}\left|\sum_{k}c_{k}k_{p,w_{k}}(z)
e^{-\varphi(z)}\right|^{p}dA(z)
&\leq\sum_{k}\frac{|c_{k}|^{p}}{\|K_{w_{k}}\|_{A_{\varphi}^{p}}^{p}}
\int_{\mathbb{D}}\left|K_{w_{k}}(z)
e^{-\varphi(z)}\right|^{p}dA(z)=\sum_{k}|c_{k}|^{p}.
\end{align}

If $p>1$, use estimate (\ref{ke21}) and apply H\"{o}lder's inequality to get
\begin{align}\label{Festimate}
\|F\|_{A_{\varphi}^{p}}^{p}\nonumber&\lesssim\int_{\mathbb{D}}\left(\sum_{k}|c_{k}|
e^{-\varphi(w_{k})}\rho(w_{k})^{2-\frac{2}{p}}|K_{w_{k}}(z)|\right)^{p}e^{-p\varphi(z)}dA(z)\\
&\lesssim\int_{\mathbb{D}}\left(\sum_{k}|c_{k}|^{p}e^{-\varphi(w_{k})}|K_{w_{k}}(z)|\right)
M(z)^{p-1}e^{-p\varphi(z)}dA(z),
\end{align}
where
$$M(z)=\sum_{k}\rho(w_{k})^{2}e^{-\varphi(w_{k})}|K_{w_{k}}(z)|.$$

It follows from Lemmas \ref{submeanlemma}, \ref{latticelemma} and  estimate (\ref{ke21}) that
\begin{align}\label{Mestimate}
M(z)\lesssim\sum_{k}\int_{D^{r}(w_{k})}
|K_{z}(\zeta)|e^{-\varphi(\zeta)}dA(\zeta)
\lesssim\int_{\mathbb{D}}|K_{z}(\zeta)|e^{-\varphi(\zeta)}dA(\zeta)\asymp e^{\varphi(z)}.
\end{align}

Therefore, putting estimate (\ref{Mestimate}) into (\ref{Festimate}) and using  estimate (\ref{ke21}) again, we deduce
\begin{align*}
\|F\|_{A_{\varphi}^{p}}^{p}\nonumber&\lesssim\int_{\mathbb{D}}\left(\sum_{k}|c_{k}|^{p}
e^{-\varphi(w_{k})}\left|K_{w_{k}}(z)\right|\right)e^{-\varphi(z)}dA(z)\\
&\lesssim\sum_{k}|c_{k}|^{p}e^{-\varphi(w_{k})}\int_{\mathbb{D}}
\left|K_{w_{k}}(z)\right|e^{-\varphi(z)}dA(z)
\lesssim\sum_{k}|c_{k}|^{p}.
\end{align*}

Since the sequence $\{c_{k}\}\in l^{p}$ is arbitrary, this together with estimate (\ref{atomicdecomposition2}) implies that $F$ belongs to $A_{\varphi}^{p}$ with $\|F\|_{A_{\varphi}^{p}}\lesssim\|\{c_{k}\}\|_{l^{p}}$. This completes the proof of the proposition.
\end{proof}
\begin{proposition}\label{uniformly}
Let $0<p<\infty$ and $\varphi\in\mathcal{W}_{0}$ with $\frac{1}{\sqrt{\Delta\varphi}}\asymp\rho\in\mathcal{L}_{0}$. Then the set $\left\{k_{p,z}:z\in\mathbb{D}\right\}$ is bounded in $A_{\varphi}^{p}$ and the normalized reproducing kernel $k_{p,z}$ tends to zero uniformly on
compact subsets of $\mathbb{D}$ as $|z|\rightarrow1^{-}$.
\end{proposition}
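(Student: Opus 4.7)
The boundedness of $\{k_{p,z}:z\in\mathbb{D}\}$ in $A_{\varphi}^{p}$ is immediate from the normalization $k_{p,z}=K_{z}/\|K_{z}\|_{A_{\varphi}^{p}}$, which gives $\|k_{p,z}\|_{A_{\varphi}^{p}}=1$ for every $z\in\mathbb{D}$. The entire effort goes into the local uniform convergence to zero.

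The plan is to combine the reproducing kernel bound (\ref{ke11}) with the norm estimate (\ref{ke21}) to produce the pointwise bound
\begin{align*}
|k_{p,z}(w)|=\frac{|K(w,z)|}{\|K_{z}\|_{A_{\varphi}^{p}}}\lesssim \frac{e^{\varphi(w)}}{\rho(w)}\,\rho(z)^{1-\frac{2}{p}}\,e^{-\sigma d_{\rho}(w,z)},\qquad w,z\in\mathbb{D}.
\end{align*}
On a fixed compact $K\subset\mathbb{D}$ the prefactor $e^{\varphi(w)}/\rho(w)$ is uniformly bounded, so it suffices to show $\rho(z)^{1-2/p}e^{-\sigma d_{\rho}(w,z)}\to 0$ uniformly in $w\in K$ as $|z|\to 1^{-}$.

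When $p\geq 2$, the exponent $1-2/p$ is nonnegative and $\rho$ is bounded on $\mathbb{D}$ (since $\rho\in C_{0}$), while $d_{\rho}(w,z)\to\infty$ uniformly for $w\in K$ as $|z|\to 1^{-}$; this last point follows because $\rho\to 0$ at the boundary forces any curve from $w$ to $z$ to accumulate an arbitrarily large integral of $1/\rho$. Hence the expression tends to zero, and this case is essentially routine.

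The delicate case is $p<2$, where $\rho(z)^{1-2/p}$ blows up. Here I plan to exploit the full strength of $\rho\in\mathcal{L}_{0}$: for each $\varepsilon>0$ there is a compact $E_{\varepsilon}\subset\mathbb{D}$ (which I may enlarge to contain $K$) on whose complement $\rho$ is $\varepsilon$-Lipschitz. Along any absolutely continuous curve $\gamma$ lying in $\mathbb{D}\setminus(K\cup E_{\varepsilon})$, the chain rule yields $\bigl|\tfrac{d}{dt}\log\rho(\gamma(t))\bigr|\leq \varepsilon|\gamma'(t)|/\rho(\gamma(t))$. Integrating from the last exit time of $K\cup E_{\varepsilon}$ up to the endpoint $z$, and using that $\rho$ is bounded below by some $c(\varepsilon,K)>0$ on $\partial(K\cup E_{\varepsilon})$, one obtains
\begin{align*}
d_{\rho}(w,z)\,\geq\,\frac{1}{\varepsilon}\log\frac{1}{\rho(z)}-C(\varepsilon,K),
\end{align*}
valid uniformly for $w\in K$ once $z$ lies outside a compact set depending on $\varepsilon$ and $K$. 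Choosing $\varepsilon$ so small that $\sigma/\varepsilon>2/p-1$ collapses the product $\rho(z)^{1-2/p}e^{-\sigma d_{\rho}(w,z)}$ into $O(\rho(z)^{\beta})$ with some $\beta>0$, and this tends uniformly to zero since $\rho(z)\to 0$. The main obstacle is precisely this sharper lower bound on $d_{\rho}$ in terms of $\log(1/\rho(z))$ (rather than merely $\log(1/(1-|z|))$), which genuinely requires the $\mathcal{L}_{0}$ refinement and cannot be obtained from continuity of $\rho$ up to the boundary alone.
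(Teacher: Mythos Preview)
Your argument is correct but takes a different route from the paper. The paper does not use the exponential bound (\ref{ke11}) at all here; instead it invokes the polynomial off-diagonal estimate from \cite[Theorem 3.3]{Hu1} (restated later in the paper as estimate (\ref{(Carlesonlemma2)})), namely
\[
|K(w,z)|\leq C\,\frac{e^{\varphi(z)+\varphi(w)}}{\rho(z)\rho(w)}\left(\frac{\min\{\rho(z),\rho(w)\}}{|z-w|}\right)^{M}
\]
for every $M>0$. Dividing by $\|K_{z}\|_{A_{\varphi}^{p}}\asymp e^{\varphi(z)}\rho(z)^{2/p-2}$ and bounding $|z-w|\geq |z|-r_{0}$ on $\{|w|\leq r_{0}\}$ gives $|k_{p,z}(w)|\lesssim \rho(z)^{M+1-2/p}/(|z|-r_{0})^{M}$, and one simply takes $M>2/p-1$. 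The whole proof is two lines and no case split on $p$ is needed.

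What you do instead is essentially re-derive the key ingredient of \cite[Theorem 3.3]{Hu1}: your lower bound $d_{\rho}(w,z)\geq \varepsilon^{-1}\log(1/\rho(z))-C$ via the $\mathcal{L}_{0}$ Lipschitz condition is precisely what converts the exponential decay $e^{-\sigma d_{\rho}}$ into arbitrary polynomial decay $\rho(z)^{\sigma/\varepsilon}$. So your proof is self-contained relative to Lemma \ref{kernelestimate1} alone, at the cost of reproducing an estimate the paper already has available by citation. Both work; the paper's version is shorter because it outsources the hard step.
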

\begin{proof}
It is trivial that the set $\left\{k_{p,z}:z\in\mathbb{D}\right\}$ is bounded in $A_{\varphi}^{p}$ since  $\|k_{p,z}\|_{A_{\varphi}^{p}}=1$.
According to \cite[Theorem 3.3]{Hu1}, for each positive number $M$, there exists a constant $C>0$ such that
\begin{align*}
|K(w, z)| \leq C \frac{e^{\varphi(z)+\varphi(w)}}{\rho(z) \rho(w)}\left(\frac{\min \{\rho(z), \rho(w)\}}{|z-w|}\right)^{M}, \ \ \ z,w\in\mathbb{D},\ z\neq w.
\end{align*}
Hence, for any $|w|\leq r_{0}<1$, when $M$ is taken large enough, the inequality above together with estimate (\ref{ke21})  gives
\begin{align*}
|k_{p,z}(w)|&\leq C\frac{e^{\varphi(w)}}{\rho(w)}\cdot\frac{\rho(z)^{M+1-\frac{2}{p}}}{|z-w|^{M}}
\leq C\frac{e^{\varphi(w)}}{\rho(w)}\cdot\frac{\rho(z)^{M+1-\frac{2}{p}}}
{\left||z|-r_{0}\right|^{M}}\rightarrow0,\ \ \text{as}\ |z|\ \rightarrow1^{-}.
\end{align*}
Therefore, $k_{p,z}\rightarrow0$  uniformly on compact subsets of $\mathbb{D}$ as $|z|\rightarrow1^{-}$.
\end{proof}
\begin{proposition}\label{corollarysubmean}
Let $0<p<\infty$, $\delta\in(0, \alpha]$ and $\mu$ be a positive Borel measure on $\mathbb{D}$. Then we have
\begin{align*}
\int_{\mathbb{D}}\left|f(z)e^{-\varphi(z)}\right|^{p}d\mu(z)
\lesssim\int_{\mathbb{D}}\left|f(z)e^{-\varphi(z)}\right|^{p}
\widehat{\mu}_{\delta}(z)dA(z)
\end{align*}
for any $f\in H(\mathbb{D})$.
\end{proposition}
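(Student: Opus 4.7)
The plan is to prove this proposition by combining the pointwise sub-mean value estimate of Lemma \ref{submeanlemma} with Fubini's theorem, together with the Lipschitz behaviour of $\rho$ that comes for free from $\rho\in\mathcal{L}_{0}\subset\mathcal{L}$.

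First I would pick an auxiliary radius $r\in(0,\alpha]$ to be specified later (ultimately $r$ will be chosen smaller than $\delta$ by a fixed factor). By Lemma \ref{submeanlemma} applied at each point $z\in\mathbb{D}$,
\begin{align*}
\bigl|f(z)e^{-\varphi(z)}\bigr|^{p}\lesssim\frac{1}{\rho(z)^{2}}\int_{D^{r}(z)}\bigl|f(w)e^{-\varphi(w)}\bigr|^{p}\,dA(w).
\end{align*}
Integrating this inequality against $d\mu(z)$ and applying Fubini's theorem gives
\begin{align*}
\int_{\mathbb{D}}\bigl|f(z)e^{-\varphi(z)}\bigr|^{p}\,d\mu(z)\lesssim\int_{\mathbb{D}}\bigl|f(w)e^{-\varphi(w)}\bigr|^{p}\!\left(\int_{\{z\in\mathbb{D}\,:\,w\in D^{r}(z)\}}\frac{d\mu(z)}{\rho(z)^{2}}\right)dA(w).
\end{align*}

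The next step is to show that the inner integral is bounded by a constant multiple of $\widehat{\mu}_{\delta}(w)$. For this I would use that $\rho\in\mathcal{L}$ is Lipschitz: whenever $|z-w|<r\rho(z)$, we have $|\rho(z)-\rho(w)|\leq\|\rho\|_{\mathcal{L}}|z-w|\leq r\|\rho\|_{\mathcal{L}}\rho(z)$. Choosing $r$ small enough (say $r\|\rho\|_{\mathcal{L}}<1/2$), this forces $\rho(z)\asymp\rho(w)$, and hence $|z-w|<r\rho(z)\leq Cr\rho(w)$ for a fixed constant $C$. Therefore the region of integration is contained in $D^{Cr}(w)$. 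Choosing $r=\min\{\alpha,\delta/C\}$, we obtain
\begin{align*}
\int_{\{z\,:\,w\in D^{r}(z)\}}\frac{d\mu(z)}{\rho(z)^{2}}\lesssim\frac{\mu(D^{\delta}(w))}{\rho(w)^{2}}=\widehat{\mu}_{\delta}(w),
\end{align*}
which, when inserted into the previous display, yields the desired inequality.

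The main conceptual step is the second one, where the geometric switch from ``$w\in D^{r}(z)$'' to ``$z\in D^{C r}(w)$'' relies crucially on the fact that $\rho$ does not vary much on small $\rho$-scaled discs. This is exactly the role of the $\mathcal{L}$-condition on $\rho$, and is the only part that is not a direct quotation of earlier lemmas. Everything else is routine: the sub-mean estimate comes from Lemma \ref{submeanlemma}, the interchange of integration is justified because the integrand is nonnegative, and the final adjustment is just choosing $r$ small in terms of $\delta$ and $\|\rho\|_{\mathcal{L}}$, with the implied constant in $\lesssim$ depending on $p$, $\delta$, and $\rho$.
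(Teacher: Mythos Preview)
Your proof is correct and follows essentially the same approach the paper has in mind: the paper only writes ``Similar to that of \cite[Lemma 2.4]{Hu4}'', and the very same sub-mean-plus-Fubini argument you outline is in fact carried out later in the paper (see the chain of estimates leading to (\ref{estimate}) in the proof of Theorem \ref{carlesonB}), where the auxiliary radius is taken to be $\delta/2$ and the inclusion $\{z:w\in D^{\delta/2}(z)\}\subset D^{\delta}(w)$ is justified via \cite[Lemma 3.1]{Hu1}, which encodes exactly the Lipschitz comparison of $\rho$ you used. One small remark: when fixing $r=\min\{\alpha,\delta/C\}$ you should also make sure $r\|\rho\|_{\mathcal{L}}<1/2$ holds, but this is automatic once $r$ is taken small enough and does not affect the argument.
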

\begin{proof}
Similar to that of \cite[Lemma 2.4]{Hu4}.
\end{proof}
Given a measurable function $f$, let $\widetilde{f}$ be the Berezin transform of $f$. If we set $d\mu=fdA$, then we write $\widetilde{f}=\widetilde{\mu}$.

\begin{proposition}\label{Berezinfunctioncompress}
Let $1 \leq p \leq\infty$. Then the operator $f\mapsto\widetilde{f}$ is bounded on $L^{p}$.
\end{proposition}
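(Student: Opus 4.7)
The plan is to represent the Berezin transform as an integral operator with a positive kernel and then invoke Schur's test with constant weight $h\equiv 1$. Setting $H(z,w):=|k_{z}(w)|^{2}\,e^{-2\varphi(w)}$, the definition gives
\[
\widetilde{f}(z)=\int_{\mathbb{D}} H(z,w)\,f(w)\,dA(w),
\]
so I only need a uniform bound on the row integral $\int_{\mathbb{D}} H(z,w)\,dA(w)$ and on the column integral $\int_{\mathbb{D}} H(z,w)\,dA(z)$.

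The row integral is automatic: for every $z\in\mathbb{D}$,
\[
\int_{\mathbb{D}} H(z,w)\,dA(w)=\|k_{z}\|_{A_{\varphi}^{2}}^{2}=1,
\]
which already gives the $p=\infty$ endpoint $\|\widetilde{f}\|_{\infty}\le\|f\|_{\infty}$, and in particular the pointwise domination $|\widetilde{f}|\le\widetilde{|f|}$, so I may henceforth assume $f\geq 0$. The real work is to control the column integral, which is where the main obstacle lies. Using $K(z,w)=\overline{K(w,z)}$, the norm estimate $\|K_{z}\|_{A_{\varphi}^{2}}^{2}\asymp e^{2\varphi(z)}/\rho(z)^{2}$ from Lemma \ref{kernelestimate2}, and the pointwise upper bound from Lemma \ref{kernelestimate1}, I obtain
\[
H(z,w)=\frac{|K(z,w)|^{2}\,e^{-2\varphi(w)}}{K(z,z)}\lesssim \frac{1}{\rho(w)^{2}}\,e^{-2\sigma d_{\rho}(z,w)}.
\]
So the column bound reduces to a Gaussian-type tail estimate
\[
\int_{\mathbb{D}} e^{-2\sigma d_{\rho}(z,w)}\,dA(z)\lesssim \rho(w)^{2},
\]
which I would establish by partitioning $\mathbb{D}$ via a $(\rho,r)$-lattice $\{w_{k}\}$ from Lemma \ref{latticelemma}: on each cell $D^{r}(w_{k})$ the function $\rho$ is comparable to $\rho(w_{k})$ and the area is $\asymp \rho(w_{k})^{2}$, while the geometric decay of $e^{-\sigma d_{\rho}(\cdot,w)}$ from cell to cell, together with the bounded-overlap property (c) of Lemma \ref{latticelemma}, makes the resulting sum converge and produces the right-hand scale $\rho(w)^{2}$.

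With the uniform $L^{1}$ and $L^{\infty}$ bounds in hand---equivalently, Schur's test with $h\equiv 1$ satisfied---the Riesz--Thorin interpolation theorem immediately gives boundedness of $f\mapsto\widetilde{f}$ on $L^{p}$ for every $1\le p\le\infty$. The only genuinely delicate step is the $d_{\rho}$-integral estimate, which relies essentially on $\rho\in\mathcal{L}_{0}$: the smallness of the Lipschitz constant of $\rho$ outside a compact set forces $d_{\rho}(z,w)$ to grow at least proportionally to the Euclidean distance (rescaled by the local value of $\rho$), making $e^{-\sigma d_{\rho}}$ a genuine decaying tail rather than a merely bounded factor.
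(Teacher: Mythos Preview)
Your proposal is correct and follows essentially the same route as the paper: both arguments establish the $L^{1}$ and $L^{\infty}$ endpoints via the row and column integrals of the kernel $H(z,w)=|k_{z}(w)|^{2}e^{-2\varphi(w)}$ and then interpolate. The only cosmetic difference is that the paper quotes \cite[Corollary 3.1]{Hu1} directly for the tail estimate $\int_{\mathbb{D}}e^{-\sigma d_{\rho}(z,w)}\,dA(z)\lesssim\rho(w)^{2}$, whereas you sketch its proof via a lattice decomposition; the paper also phrases the $p=\infty$ bound using the asymptotic $\|k_{z}\|_{A_{\varphi}^{2}}\asymp 1$ from Lemma~\ref{kernelestimate2} rather than the exact equality $\|k_{z}\|_{A_{\varphi}^{2}}=1$ that you (correctly) use.
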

\begin{proof}
For $w\in\mathbb{D}$, according to estimates (\ref{ke21}), (\ref{ke11}) and \cite[Corollary 3.1]{Hu1}, we have
\begin{align}\label{Bfc1}
\int_{\mathbb{D}}\left|k_{z}(w)e^{-\varphi(w)}\right|^{2}dA(z)
\nonumber&
\asymp\int_{\mathbb{D}}\left|K(w,z)\right|^{2}e^{-2\varphi(w)-2\varphi(z)}\rho(z)^{2}dA(z)\\
&\lesssim\frac{1}{\rho(w)^{2}}\int_{\mathbb{D}}e^{-\sigma d_{\rho}(z,w)}dA(z)
\leq C
\end{align}
for some constant $C>0$.

If $p = 1$, applying  Fubini's
theorem and using estimate (\ref{Bfc1}), we obtain
\begin{align*}
\left\|\widetilde{f}\right\|_{L^{1}}
&\leq\int_{\mathbb{D}}\int_{\mathbb{D}}
\left|k_{z}(w)e^{-\varphi(w)}\right|^{2}\left|f(w)\right|dA(w)dA(z)\\
&=\int_{\mathbb{D}}\left|f(w)\right|dA(w)\int_{\mathbb{D}}
\left|k_{z}(w)e^{-\varphi(w)}\right|^{2}dA(z)
\lesssim\left\|f\right\|_{L^{1}}.
\end{align*}

If $p =\infty$, estimate (\ref{ke22}) gives
\begin{align*}
\left|\widetilde{f}\right|\leq\int_{\mathbb{D}}
\left|k_{z}(w)e^{-\varphi(w)}\right|^{2}\left|f(w)\right|dA(w)
\leq \left\|f\right\|_{L^{\infty}}\int_{\mathbb{D}}
\left|k_{z}(w)e^{-\varphi(w)}\right|^{2}dA(w)
\asymp\left\|f\right\|_{L^{\infty}}.
\end{align*}
Hence, we have $\left\|\widetilde{f}\right\|_{L^{\infty}}\lesssim\left\|f\right\|_{L^{\infty}}$.
Therefore, by interpolation we see that the operator $f\mapsto\widetilde{f}$ is bounded on
$L^{p}$ for all $1 \leq p \leq\infty$.
\end{proof}
\begin{proposition}\label{ThreeEquivalents}
Let $0<p<\infty$ and $\mu$ be a positive Borel measure on $\mathbb{D}$. Then the following statements are equivalent:\\
(a) $\widetilde{\mu}\in L^{p}$;\\
(b) $\widehat{\mu}_{\delta}\in L^{p}$ for some (or any) $\delta\in(0, \alpha]$;\\
(c) The sequence $\left\{\widehat{\mu}_{r}(a_{k})\rho(a_{k})^{2/p}\right\}_{k}\in l^{p}$ for some (or any) $(\rho,r)$-lattice $\{a_{k}\}$ with $r\in(0, \alpha]$.\\
Moreover, we have
\begin{align}\label{TE}
\|\widetilde{\mu}\|_{L^{p}}\asymp\|\widehat{\mu}_{\delta}\|_{L^{p}}
\asymp\left\|\left\{\widehat{\mu}_{r}(a_{k})\rho(a_{k})^{2/p}\right\}_{k}\right\|_{l^{p}}.
\end{align}
\end{proposition}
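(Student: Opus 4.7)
The plan is a three-step cycle (b) $\Leftrightarrow$ (c) $\Leftrightarrow$ (a), leveraging the lattice structure and the kernel estimates from Section 2. The norm equivalences in (\ref{TE}) will fall out as byproducts once each implication is proved with the right quantitative bound.

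First I would establish (b) $\Leftrightarrow$ (c) by discretization. Fix a $(\rho,r)$-lattice $\{a_k\}$ with $r$ chosen small compared with $\alpha$. Because $\rho \in \mathcal{L}_0$, on each disc $D^r(a_k)$ we have $\rho(z) \asymp \rho(a_k)$, and consequently $D^{\delta-r}(a_k) \subset D^\delta(z) \subset D^{\delta+r}(a_k)$, so $\widehat{\mu}_{\delta-r}(a_k) \lesssim \widehat{\mu}_\delta(z) \lesssim \widehat{\mu}_{\delta+r}(a_k)$ for $z \in D^r(a_k)$. Splitting the integral $\int_{\mathbb{D}} \widehat{\mu}_\delta^p \, dA$ using that $\{D^{sr}(a_k)\}$ is pairwise disjoint (lower bound) and $\{D^r(a_k)\}$ has finite multiplicity $N$ (upper bound), I sandwich this integral between sums of the form $\sum_k \widehat{\mu}_{\delta\pm r}(a_k)^p \rho(a_k)^2$. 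Letting $r$ shrink and absorbing the small perturbations in $\delta$ gives $\|\widehat{\mu}_\delta\|_{L^p}^p \asymp \sum_k \widehat{\mu}_r(a_k)^p \rho(a_k)^2$, with the equivalence independent of the particular $\delta \in (0,\alpha]$ and the particular lattice.

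Next, I would prove (a) $\Rightarrow$ (b). Estimate (\ref{ke12}) together with $\|K_z\|_{A^2_\varphi} \asymp e^{\varphi(z)}\rho(z)^{-1}$ from (\ref{ke21}) gives $|k_z(w)|^2 e^{-2\varphi(w)} \gtrsim \rho(z)^{-2}$ uniformly for $w \in D^\alpha(z)$, whence $\widetilde{\mu}(z) \gtrsim \widehat{\mu}_\alpha(z)$ pointwise. Hence $\|\widehat{\mu}_\alpha\|_{L^p} \lesssim \|\widetilde{\mu}\|_{L^p}$, and the first step converts this into (b). For the reverse direction (b) $\Rightarrow$ (a) when $p\geq 1$, I would apply Proposition \ref{corollarysubmean} with $f=k_z$ at exponent $2$ to obtain $\widetilde{\mu}(z) \lesssim \widetilde{\widehat{\mu}_\delta}(z)$, and then invoke Proposition \ref{Berezinfunctioncompress} to conclude $\|\widetilde{\mu}\|_{L^p} \lesssim \|\widehat{\mu}_\delta\|_{L^p}$.

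The main obstacle is the case $0 < p < 1$, where Proposition \ref{Berezinfunctioncompress} is unavailable. To handle this, I would instead go through (c). Using the global upper estimate (\ref{ke11}) and (\ref{ke21}) I get
\begin{align*}
\widetilde{\mu}(z) \lesssim \int_{\mathbb{D}} \frac{e^{-2\sigma d_\rho(z,w)}}{\rho(w)^2}\,d\mu(w) \lesssim \sum_k \widehat{\mu}_r(a_k)\, e^{-2\sigma d_\rho(z,a_k)},
\end{align*}
where the second inequality uses $\rho(w)\asymp\rho(a_k)$ and the triangle inequality for $d_\rho$ on each $D^r(a_k)$. Applying the subadditivity (\ref{fact0}) to pass $p$-th powers inside the sum and integrating, the conclusion reduces to the decay bound $\int_{\mathbb{D}} e^{-t d_\rho(z,a_k)}\, dA(z) \lesssim \rho(a_k)^2$ from \cite[Corollary 3.1]{Hu1} (with $t=2\sigma p$; if necessary I replace $\sigma$ in Lemma \ref{kernelestimate1} by a smaller constant so that $2\sigma p$ is in the admissible range). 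This yields $\|\widetilde{\mu}\|_{L^p}^p \lesssim \sum_k \widehat{\mu}_r(a_k)^p \rho(a_k)^2$, completing the cycle and giving (\ref{TE}) uniformly in $p \in (0,\infty)$. The delicate point to watch is keeping the parameters $\delta$, $r$, $\sigma$, and the $(\rho,r)$-lattice coherent across the two regimes $p \geq 1$ and $p < 1$, so that all the ``some/any'' statements in (b)--(c) are genuinely equivalent.
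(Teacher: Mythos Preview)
Your proposal is correct and follows essentially the same route as the paper: discretization for (b) $\Leftrightarrow$ (c), the pointwise lower bound $\widehat{\mu}_\delta \lesssim \widetilde{\mu}$ for (a) $\Rightarrow$ (b), Proposition \ref{Berezinfunctioncompress} for (b) $\Rightarrow$ (a) when $p\geq 1$, and the kernel decay plus subadditivity (\ref{fact0}) for $0<p<1$. The only notable difference is that in the $0<p<1$ case the paper first passes from $d\mu$ to $\widehat{\mu}_\delta\,dA$ via Proposition \ref{corollarysubmean} before discretizing, whereas you discretize $\widetilde{\mu}$ directly over the lattice; both are equivalent. One place to tighten: your inclusions $D^{\delta\pm r}(a_k)$ for $z\in D^r(a_k)$ should be $D^{\delta/B}(a_k)\subset D^\delta(z)\subset D^{B\delta}(a_k)$ with a multiplicative constant $B$ (since the radii scale with $\rho$), and the paper handles the ``some/any'' claim in (b)--(c) through a separate lattice-comparison estimate (\ref{meanfuncnorm}) rather than by ``letting $r$ shrink''; your sketch glosses over this parameter-independence step.
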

\begin{proof}
Given $0<p<\infty$, $s\in \mathbb{R}$,  $(\rho,r)$-lattice $\{a_{k}\}$ and $(\rho,\delta)$-lattice $\{b_{k}\}$. % where $r,\ \delta\in(0, \alpha]$.
We first assert that
\begin{align}\label{meanfuncnorm}
\left\|\left\{\widehat{\mu}_{r}(a_{k})\rho(a_{k})^{s+2/p}\right\}_{k}\right\|_{l^{p}}
\asymp\left\|\left\{\widehat{\mu}_{\delta}(b_{k})\rho(b_{k})^{s+2/p}\right\}_{k}\right\|_{l^{p}}.
\end{align}
To obtain this, for a fixed $z\in\mathbb{D}$, consider the set
$$R_{z}=\left\{j:D^{\delta}(z)\cap D^{r}(a_{j})\neq\emptyset\right\},$$
and let $card(R_{z})$ be the cardinality of $R_{z}$. Since there exists a constant $t>0$ such that the set $\{D^{rt}(a_{j})\}_{j}$ are pairwise disjoint by  Lemma \ref{latticelemma}, and
there exist a constant $C>0$ such that
\begin{align}\label{tworho}
C^{-1}\rho(z)\leq\rho(a_{j})\leq C\rho(z)
\end{align}
for  $j\in R_{z}$ by \cite[Lemma 3.1]{Hu1}, it follows that
\begin{align*}
D^{rt}(a_{j})\subset D^{\delta+rtC+rC}(z),\ \text{for}\ j\in R_{z}.
\end{align*}
Thus we have $\bigcup_{j\in R_{z}}D^{rt}(a_{j})\subset D^{\delta+rtC+rC}(z)$. Hence there exists some positive integer $M$ independent of $z$ such that $card(R_{z})\leq M$. Let
\begin{align*}
T_{j,k}=\left\{
\begin{array}{cl}
1, &\ if\ D^{\delta}(b_{k})\cap D^{r}(a_{j})\neq\emptyset,\\
0, &\  if\ D^{\delta}(b_{k})\cap D^{r}(a_{j})=\emptyset.
\end{array}\right.
\end{align*}
Then we have
\begin{align}\label{SM}
\sum_{j}T_{j,k}=card(R_{b_{k}})\leq M.
\end{align}

Symmetrically, for a fixed $z\in\mathbb{D}$, we let
\begin{align*}
L_{z}=\left\{k:D^{\delta}(b_{k})\cap D^{r}(z)\neq\emptyset\right\}.
\end{align*}
From the definition of $L_{z}$ and \cite[Lemma 3.1]{Hu1}, it is easy to see that $D^{r}(a_{j})\subset\bigcup_{k\in L_{a_{j}}}D^{\delta}(b_{k})$ and
$\rho(b_{k})\asymp\rho(a_{j})$ for $k\in L_{a_{j}}$.
Then we arrive at
\begin{align}\label{777}
\widehat{\mu}_{r}(a_{j})\rho(a_{j})^{s+2/p}\lesssim\sum_{k\in L_{a_{j}}}
\widehat{\mu}_{\delta}(b_{k})\rho(b_{k})^{s+2/p}.
\end{align}
Therefore, apply estimates (\ref{777}) and (\ref{SM}), we deduce
\begin{align}\label{888}
\sum_{j}\widehat{\mu}_{r}(a_{j})^{p}\rho(a_{j})^{sp+2}
\nonumber&\lesssim\sum_{j}\left(\sum_{k\in L_{a_{j}}}
\widehat{\mu}_{\delta}(b_{k})\rho(b_{k})^{s+2/p}\right)^{p}\\
&\lesssim\sum_{k}\sum_{j}T_{j,k}\widehat{\mu}_{\delta}(b_{k})^{p}\rho(b_{k})^{sp+2}
\leq M\sum_{k}\widehat{\mu}_{\delta}(b_{k})^{p}\rho(b_{k})^{sp+2}.
\end{align}

By symmetry, we get
\begin{align*}
\sum_{k}\widehat{\mu}_{r}(b_{k})^{p}\rho(a_{k})^{sp+2}
\lesssim \sum_{j}\widehat{\mu}_{\delta}(a_{j})^{p}\rho(a_{j})^{sp+2},
\end{align*}
this together with estimate (\ref{888}) gives  assertion (\ref{meanfuncnorm}).

We can now prove the equivalence $(b)\Leftrightarrow(c)$. In fact, given any $(\rho,\delta)$-lattice $\{b_{k}\}$ with $\delta\in(0, \alpha]$, in views of \cite[Lemma 3.1]{Hu1}, there exists some constant $B>0$ such that
\begin{align*}
D^{\delta/B}(b_{k})\subseteq D^{\delta}(z)\subseteq D^{B\delta}(b_{k}),\ \ \text{for}\ z\in D^{\delta}(b_{k}).
\end{align*}
Hence, this together with Lemma \ref{latticelemma}, \cite[Lemma 3.1]{Hu1} and (\ref{meanfuncnorm}) respectively gives
\begin{align*}
\int_{\mathbb{D}}\widehat{\mu}_{\delta}(z)^{p}dA(z)
\nonumber&\asymp\sum_{k}\int_{D^{\delta}(b_{k})}\widehat{\mu}_{\delta}(z)^{p}dA(z)
\lesssim\sum_{k}\int_{D^{\delta}(b_{k})}\widehat{\mu}_{B\delta}(b_{k})^{p}dA(z)\\
&\asymp\sum_{k}\widehat{\mu}_{B\delta}(b_{k})^{p}\rho(b_{k})^{2}
\asymp\sum_{k}\widehat{\mu}_{r}(a_{k})^{p}\rho(a_{k})^{2},
\end{align*}
and
\begin{align*}
\int_{\mathbb{D}}\widehat{\mu}_{\delta}(z)^{p}dA(z)
\nonumber&\asymp\sum_{k}\int_{D^{\delta}(b_{k})}\widehat{\mu}_{\delta}(z)^{p}dA(z)
\gtrsim\sum_{k}\int_{D^{\delta}(b_{k})}\widehat{\mu}_{\delta/B}(b_{k})^{p}dA(z)\\
&\asymp\sum_{k}\widehat{\mu}_{\delta/B}(b_{k})^{p}\rho(b_{k})^{2}
\asymp\sum_{k}\widehat{\mu}_{r}(a_{k})^{p}\rho(a_{k})^{2},
\end{align*}
for any $(\rho,r)$-lattice $\{a_{k}\}$ with $r\in(0, \alpha]$,
which gives $(b)\Leftrightarrow(c)$ with
\begin{align}\label{meanfuncnorm1}
\left\|\widehat{\mu}_{\delta}\right\|_{L^{p}}\asymp
\left\|\left\{\widehat{\mu}_{r}(a_{k})\rho(a_{k})^{2/p}\right\}_{k}\right\|_{l^{p}}.
\end{align}
Moreover, for fixed $\delta, r\in(0, \alpha]$, we have
\begin{align}\label{meanfuncnorm11}
\left\|\widehat{\mu}_{\delta}\right\|_{L^{p}}\asymp
\left\|\widehat{\mu}_{r}\right\|_{L^{p}}.
\end{align}

Next, we are going to prove $(a)\Rightarrow(b)$. For $z\in\mathbb{D}$, by Lemma \ref{kernelestimate1} and \cite[Lemma 3.1]{Hu1}, we see that there exists some $r_{0}\in(0,\alpha]$ such that
\begin{align}\label{K}
\left|K(w,z)\right|\asymp\frac{e^{\varphi(w)+\varphi(z)}}{\rho(z)^{2}},\ w\in D^{r_{0}}(z).
\end{align}
Hence, it follows by estimate (\ref{ke21}) that
\begin{align}\label{meanfuncestimate}
\widehat{\mu}_{r_{0}}(z)\nonumber&=\frac{1}{\rho(z)^{2}}\int_{D^{r_{0}}(z)}d\mu(w)
\asymp e^{-2\varphi(z)}\rho(z)^{2}\int_{D^{r_{0}}(z)}\left|K(w,z)\right|^{2}e^{-2\varphi(w)}d\mu(w)\\
&\asymp\int_{D^{r_{0}}(z)}|k_{z}(w)|^{2}
e^{-2\varphi(w)}d\mu(w)
\lesssim\int_{\mathbb{D}}|k_{z}(w)|^{2}
e^{-2\varphi(w)}d\mu(w)=\widetilde{\mu}(z),
\end{align}
 this together with (\ref{meanfuncnorm11}) yields statement $(b)$ with
\begin{align}\label{te1}
\left\|\widehat{\mu}_{\delta}\right\|_{L^{p}}\lesssim \left\|\widetilde{\mu}\right\|_{L^{p}}
\end{align}
for  $\delta\in(0, \alpha]$.

It remains to show the implication $(b)\Rightarrow(a)$. If $1\leq p<\infty$,
take $f=k_{z}$ and $p=2$ in Proposition \ref{corollarysubmean}, we obtain
\begin{align*}
\widetilde{\mu}(z)\lesssim\widetilde{\widehat{\mu}_{\delta}}(z),\ \ z\in\mathbb{D}.
\end{align*}
Hence, this together with Proposition \ref{Berezinfunctioncompress} gives
\begin{align}\label{te2}
\|\widetilde{\mu}\|_{L^{p}}\lesssim
\|\widetilde{\widehat{\mu}_{\delta}}\|_{L^{p}}
\lesssim\|\widehat{\mu}_{\delta}\|_{L^{p}}.
\end{align}

If $0<p<1$, for any $(\rho,r)$-lattice $\{a_{k}\}$ with $r\in(0,\alpha]$, note that there exists some constant $B>1$ such that
\begin{align}\label{4545}
\bigcup_{w\in D^{r}(a_{j})}D^{r}(w)\subset D^{Br}(a_{j}),\ \ w \in\mathbb{D},
\end{align}
see \cite[Lemma 3.1 (B)]{Hu1} and its proof. Then we can divide the lattice $\{a_{k}\}$ into $J$ subsequence $\{a_{j,k}\}_{k}$ $(j=1,2,\ldots,J)$  such that each sequence $\{a_{j,k}\}_{k}$  is a
$(\rho,Br)$-lattice. It follows from (\ref{meanfuncnorm1}) that
\begin{align}\label{3434}
\sum_{k}\widehat{\mu}_{Br}(a_{k})^{p}\rho(a_{k})^{2}
=\sum_{j=1}^{J}\sum_{k}\widehat{\mu}_{Br}(a_{j,k})^{p}\rho(a_{j,k})^{2}
\lesssim\|\widehat{\mu}_{\delta}\|_{L^{p}}^{p}
\end{align}
for $\delta\in(0,\alpha]$.

By estimate (\ref{ke21}), Proposition \ref{corollarysubmean}, Lemma \ref{latticelemma}, (\ref{4545}), the fact (\ref{fact0}), estimate (\ref{ke11}) and \cite[Lemma 3.1]{Hu1}, we deduce
\begin{align*}
\left|\widetilde{\mu}(z)\right|^{p}
&\asymp e^{-2p\varphi(z)}\rho(z)^{2p}
\left(\int_{\mathbb{D}}|K(w,z)|^{2}e^{-2\varphi(w)}
d\mu(w)\right)^{p}\\
&\lesssim e^{-2p\varphi(z)}\rho(z)^{2p}
\left(\int_{\mathbb{D}}|K(w,z)|^{2}e^{-2\varphi(w)}\widehat{\mu}_{r}(w)
dA(w)\right)^{p}\\
&\lesssim e^{-2p\varphi(z)}\rho(z)^{2p}
\left(\sum_{j}\int_{D^{r}(a_{j})}|K(w,z)|^{2}e^{-2\varphi(w)}\widehat{\mu}_{r}(w)
dA(w)\right)^{p}\\
&\lesssim e^{-2p\varphi(z)}\rho(z)^{2p}
\left(\sum_{j}\widehat{\mu}_{Br}(a_{j})\int_{D^{r}(a_{j})}|K(w,z)|^{2}
e^{-2\varphi(w)}dA(w)\right)^{p}\\
&\leq e^{-2p\varphi(z)}\rho(z)^{2p}\sum_{j}\widehat{\mu}_{Br}(a_{j})^{p}
\left(\int_{D^{r}(a_{j})}|K(w,z)|^{2}e^{-2\varphi(w)}dA(w)\right)^{p}\\
&\lesssim\sum_{j}\widehat{\mu}_{Br}(a_{j})^{p}\rho(a_{j})^{2p}
\sup_{w\in D^{r}(a_{j})}\rho(w)^{-2p}e^{-2\sigma d_{\rho}(z,w)}\\
&\lesssim\sum_{j}\widehat{\mu}_{Br}(a_{j})^{p}
\sup_{w\in D^{r}(a_{j})}e^{-2\sigma d_{\rho}(z,w)}.
\end{align*}
Integrating both sides above and using \cite[Corollary 3.1]{Hu1}, we get
\begin{align*}
\left\|\widetilde{\mu}\right\|_{L^{p}}^{p}&\lesssim \sum_{j}\widehat{\mu}_{Br}(a_{j})^{p}
\sup_{w\in D^{r}(a_{j})}\int_{\mathbb{D}}e^{-2\sigma d_{\rho}(z,w)}dA(z)\\
&\lesssim \sum_{j}\widehat{\mu}_{Br}(a_{j})^{p}
\sup_{w\in D^{r}(a_{j})}\rho(w)^{2}
\lesssim\sum_{j}\widehat{\mu}_{Br}(a_{j})^{p}\rho(a_{j})^{2}.
\end{align*}
Therefore, this together with estimate (\ref{3434}) yields
\begin{align}\label{te3}
\left\|\widetilde{\mu}\right\|_{L^{p}}^{p}\lesssim\sum_{j}
\widehat{\mu}_{Br}(a_{j})^{p}\rho(a_{j})^{2}\lesssim
\|\widehat{\mu}_{\delta}\|_{L^{p}}^{p}.
\end{align}
Thus the implication $(b)\Rightarrow(a)$ follows from estimates (\ref{te2}) and (\ref{te3}) with
\begin{align}\label{te4}
\left\|\widetilde{\mu}\right\|_{L^{p}}\lesssim
\|\widehat{\mu}_{\delta}\|_{L^{p}},\ \ 0<p<\infty.
\end{align}

The quantity equivalence (\ref{TE}) comes from estimates (\ref{meanfuncnorm1}), (\ref{te1}) and (\ref{te4}).
This completes the proof of Proposition \ref{ThreeEquivalents}.
\end{proof}
\section{Carleson Measures}\label{section3}
In order to  characterize the (vanishing) $q$-Carleson measure for $A_{\varphi}^{p}$, it will be convenient for us to have a family of suitable test functions, which is given in the following lemma and it is an easy consequence of Lemmas \ref{kernelestimate1}, \ref{kernelestimate1} and \cite[Lemma 3.1, Theorem 3.3]{Hu1}.
\begin{lemma}\label{Carlesonlemma}
Suppose $0<p<\infty$ and $\varphi\in\mathcal{W}_{0}$ with $\frac{1}{\sqrt{\Delta\varphi}}\asymp\rho\in\mathcal{L}_{0}$. Then for any $w\in \mathbb{D}$ and $r\in(0,\alpha]$, we have
\begin{align}\label{(Carlesonlemma1)}
|k_{w}(z)|e^{-\varphi(z)}\asymp\frac{1}{\rho(w)}, \ \  \text{if} \ z\in D^{r}(w),
\end{align}
and
\begin{align}\label{(Carlesonlemma2)}
|k_{w}(z)|e^{-\varphi(z)}\lesssim\frac{1}{\rho(z)}\left(\frac{\min\{\rho(z),\rho(w)\}}
{|z-w|}\right)^{N}
,\ z\in\mathbb{D},
\end{align}
for each positive constant $N$.
\end{lemma}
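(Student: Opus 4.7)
The statement is essentially a restatement of the kernel estimates in a normalized form, so the plan is to reduce everything to Lemmas \ref{kernelestimate1}, \ref{kernelestimate2} together with the comparability of $\rho$ on pseudo-hyperbolic discs from \cite[Lemma 3.1]{Hu1} and the polynomial decay bound from \cite[Theorem 3.3]{Hu1}. The first step in both parts is to rewrite
\[
|k_{w}(z)|\,e^{-\varphi(z)} \;=\; \frac{|K(z,w)|\,e^{-\varphi(z)}}{\|K_{w}\|_{A_{\varphi}^{2}}}
\;\asymp\; |K(z,w)|\,\rho(w)\,e^{-\varphi(w)-\varphi(z)},
\]
where the equivalence uses \eqref{ke21} with $p=2$, which gives $\|K_{w}\|_{A_{\varphi}^{2}}\asymp e^{\varphi(w)}/\rho(w)$.

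For the localized estimate \eqref{(Carlesonlemma1)}, I would fix $z\in D^{r}(w)$ with $r\leq\alpha$. On the one hand, the upper bound \eqref{ke11} combined with the trivial estimate $e^{-\sigma d_{\rho}(z,w)}\leq 1$ yields $|K(z,w)|\lesssim e^{\varphi(z)+\varphi(w)}/(\rho(z)\rho(w))$. On the other hand, since $r\leq\alpha$ we have $z\in D^{\alpha}(w)$ and the lower bound \eqref{ke12} gives the matching lower bound $|K(z,w)|\gtrsim e^{\varphi(z)+\varphi(w)}/(\rho(z)\rho(w))$. Invoking $\rho(z)\asymp\rho(w)$ on $D^{r}(w)$ from \cite[Lemma 3.1]{Hu1}, both bounds simplify to $|K(z,w)|\asymp e^{\varphi(z)+\varphi(w)}/\rho(w)^{2}$. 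Plugging this into the normalization display above, the $e^{\varphi(z)}$, $e^{\varphi(w)}$, and one factor of $\rho(w)$ all cancel, leaving $|k_{w}(z)|\,e^{-\varphi(z)}\asymp 1/\rho(w)$.

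For the off-diagonal decay \eqref{(Carlesonlemma2)}, I would directly insert the polynomial off-diagonal estimate from \cite[Theorem 3.3]{Hu1} (the same one already recorded in the proof of Proposition \ref{uniformly}), namely
\[
|K(z,w)|\;\leq\; C_{N}\,\frac{e^{\varphi(z)+\varphi(w)}}{\rho(z)\rho(w)}
\left(\frac{\min\{\rho(z),\rho(w)\}}{|z-w|}\right)^{N},\qquad z\neq w,
\]
valid for each $N>0$. Substituting into the normalization identity, the $e^{\pm\varphi}$ factors again cancel and the $\rho(w)$ in the numerator cancels the $\rho(w)$ in the denominator, leaving exactly $|k_{w}(z)|\,e^{-\varphi(z)}\lesssim \rho(z)^{-1}(\min\{\rho(z),\rho(w)\}/|z-w|)^{N}$.

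Since both parts are direct substitutions into previously proven pointwise kernel estimates, there is no genuine obstacle; the only thing to be careful about is matching exponents in the normalization step and verifying the hypothesis $r\leq\alpha$ so that the lower bound \eqref{ke12} applies. The argument is short enough that I would present it as a single paragraph of three or four lines once the cited estimates are on the page.
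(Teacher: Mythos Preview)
Your proposal is correct and follows exactly the route the paper indicates: the paper does not spell out a proof but merely states that the lemma ``is an easy consequence of Lemmas \ref{kernelestimate1}, \ref{kernelestimate2} and \cite[Lemma 3.1, Theorem 3.3]{Hu1}'', and your argument is precisely the direct substitution of \eqref{ke11}, \eqref{ke12}, \eqref{ke21}, the comparability $\rho(z)\asymp\rho(w)$ on $D^{r}(w)$, and the polynomial decay from \cite[Theorem 3.3]{Hu1} into the definition of $k_{w}$.
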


\begin{theorem}\label{carleson}
Let $0<p\leq q<\infty$, $\varphi\in\mathcal{W}_{0}$ with $\frac{1}{\sqrt{\Delta\varphi}}\asymp\rho\in\mathcal{L}_{0}$ and $\mu$ be a finite positive Borel measure on $\mathbb{D}$. Then the following statements are equivalent:\\
(a) $\mu$ is a $q$-Carleson measure for $A_{\varphi}^{p}$;\\
(b) $\widetilde{\mu}\rho^{2-2q/p}\in L^{\infty}$;\\
(c) $\widehat{\mu}_{\delta}\rho^{2-2q/p}\in L^{\infty}$ for some (or any) $\delta\in(0, \alpha]$  small enough;\\
(d) $\left\{\widehat{\mu}_{r}(a_{k})\rho(a_{k})^{2-2q/p}\right\}_{k}\in l^{\infty}$ for some (or any) $(\rho, r)$-lattice $\{a_{k}\}$ with $r\in(0, \alpha]$ small enough.\\
Moreover, we have
\begin{align}\label{carleson1}
\|Id\|^{q}_{A_{\varphi}^{p}\rightarrow A_{\varphi}^{q}}\asymp\left\|\widetilde{\mu}\rho^{2-2q/p}\right\|_{L^{\infty}}
\asymp\left\|\widehat{\mu}_{\delta}\rho^{2-2q/p}\right\|_{L^{\infty}}
\asymp\left\|\left\{\widehat{\mu}_{r}(a_{k})\rho(a_{k})^{2-2q/p}\right\}_{k}\right\|_{l^{\infty}}.
\end{align}
\end{theorem}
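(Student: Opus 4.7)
The plan is to close the loop $(a)\Rightarrow(c)\Rightarrow(d)\Rightarrow(a)$ and separately prove $(b)\Leftrightarrow(c)$, which then also pins down the norm equivalence.

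For $(a)\Rightarrow(c)$, I would test the embedding on the normalized reproducing kernels $k_{p,z}=K_{z}/\|K_{z}\|_{A_{\varphi}^{p}}$, which have norm one in $A_{\varphi}^{p}$. By the lower kernel estimate (\ref{ke12}) together with \cite[Lemma 3.1]{Hu1}, for $w\in D^{\delta}(z)$ with $\delta\in(0,\alpha]$ small enough one has $|K(z,w)|\asymp e^{\varphi(z)+\varphi(w)}/\rho(z)^{2}$; combining this with $\|K_{z}\|_{A_{\varphi}^{p}}\asymp e^{\varphi(z)}\rho(z)^{2/p-2}$ from (\ref{ke21}) gives $|k_{p,z}(w)e^{-\varphi(w)}|^{q}\asymp\rho(z)^{-2q/p}$ on $D^{\delta}(z)$. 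Restricting the Carleson integral to this disc yields $\widehat{\mu}_{\delta}(z)\rho(z)^{2-2q/p}\lesssim\|Id\|^{q}_{A_{\varphi}^{p}\to A_{\varphi}^{q}}$.

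The equivalence $(c)\Leftrightarrow(d)$ repeats the $\sup$-norm version of the lattice/covering computation carried out in Proposition \ref{ThreeEquivalents}: since $\rho(z)\asymp\rho(a_{k})$ on $D^{r}(a_{k})$ by \cite[Lemma 3.1]{Hu1}, the extra weight $\rho^{2-2q/p}$ passes through the estimate uniformly. For $(d)\Rightarrow(a)$, which is where the assumption $p\leq q$ enters, I would fix a $(\rho,r)$-lattice $\{a_{k}\}$ and apply the sub-mean Lemma \ref{submeanlemma} to $|f e^{-\varphi}|^{p}$, yielding for $w\in D^{r}(a_{k})$ the bound $|f(w)e^{-\varphi(w)}|^{p}\lesssim\rho(a_{k})^{-2}\int_{D^{Br}(a_{k})}|f|^{p}e^{-p\varphi}\,dA$. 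Raising this to the power $q/p\geq 1$ and integrating against $d\mu$ gives
\begin{align*}
\int_{D^{r}(a_{k})}|fe^{-\varphi}|^{q}\,d\mu
\lesssim\widehat{\mu}_{r}(a_{k})\rho(a_{k})^{2-2q/p}\Bigl(\int_{D^{Br}(a_{k})}|f|^{p}e^{-p\varphi}\,dA\Bigr)^{q/p}.
\end{align*}
Summing in $k$, using $\sum x_{k}^{q/p}\leq(\sum x_{k})^{q/p}$ for $x_{k}\geq 0$ when $q/p\geq 1$, and exploiting the finite multiplicity of $\{D^{Br}(a_{k})\}$ from Lemma \ref{latticelemma}(c), yields $\int|fe^{-\varphi}|^{q}d\mu\lesssim\sup_{k}\bigl(\widehat{\mu}_{r}(a_{k})\rho(a_{k})^{2-2q/p}\bigr)\cdot\|f\|_{A_{\varphi}^{p}}^{q}$.

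Finally for $(b)\Leftrightarrow(c)$: the implication $(b)\Rightarrow(c)$ is immediate after multiplying the pointwise bound $\widehat{\mu}_{r_{0}}(z)\lesssim\widetilde{\mu}(z)$ from (\ref{meanfuncestimate}) by $\rho(z)^{2-2q/p}$ and invoking the already-established $(c)\Leftrightarrow(d)$ to interchange radii. For $(c)\Rightarrow(b)$, I would apply Proposition \ref{corollarysubmean} with $f=k_{z}$ and exponent $2$ to write $\widetilde{\mu}(z)\lesssim\int|k_{z}(w)e^{-\varphi(w)}|^{2}\widehat{\mu}_{\delta}(w)\,dA(w)$, then use $(c)$ to insert $\widehat{\mu}_{\delta}(w)\lesssim\rho(w)^{2q/p-2}$, and reduce to an integral of the form $\int\rho(w)^{2q/p-4}e^{-2\sigma d_{\rho}(z,w)}\,dA(w)$ via (\ref{ke11}) and (\ref{ke21}). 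This is the step I expect to be the main obstacle: I must show this integral is $\asymp\rho(z)^{2q/p-2}$ uniformly in $z$. The idea is to split into the near-diagonal region $D^{\alpha}(z)$, where $\rho(w)\asymp\rho(z)$ and the bound follows from \cite[Corollary 3.1]{Hu1}, and a far tail, where the $\mathcal{L}_{0}$-Lipschitz control on $\rho$ together with the exponential decay in $d_{\rho}$ absorbs any polynomial growth of $\rho(w)$. Once this estimate is in hand, the four quantities in (\ref{carleson1}) are comparable with constants depending only on the data.
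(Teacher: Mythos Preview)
Your proof is correct and follows the paper's argument closely for $(a)\Rightarrow(c)$, $(c)\Leftrightarrow(d)$, $(d)\Rightarrow(a)$, and $(b)\Rightarrow(c)$. The one place you genuinely diverge is the implication back to $(b)$.

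The paper does not prove $(c)\Rightarrow(b)$ via a weighted kernel integral as you do; instead it proves $(d)\Rightarrow(b)$ by the rescaling identity
\[
|k_{z}(w)|^{2}\,\rho(z)^{2-2q/p}\asymp|k_{s,z}(w)|^{2},\qquad s=\tfrac{2p}{q},
\]
which follows from (\ref{ke21}). With this, $\widetilde{\mu}(z)\rho(z)^{2-2q/p}$ becomes $\int_{\mathbb{D}}|k_{s,z}|^{2}e^{-2\varphi}\,d\mu$, and the lattice/sub-mean argument plus $\|k_{s,z}\|_{A_{\varphi}^{s}}=1$ finishes it without ever needing to estimate $\int\rho(w)^{t}e^{-\sigma d_{\rho}(z,w)}\,dA(w)$ for a nonzero $t$. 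Your route, by contrast, reduces to exactly that integral with $t=2q/p-4$, and relies on \cite[Corollary 3.1]{Hu1} (which does supply the needed bound $\lesssim\rho(z)^{t+2}$ for any $t\in\mathbb{R}$; the paper itself uses this freely, e.g.\ in Proposition \ref{welldefined}). So what you flag as ``the main obstacle'' is in fact already a black box available from \cite{Hu1}, and your argument goes through; the paper's trick simply sidesteps it by absorbing the weight $\rho^{2-2q/p}$ into a renormalized kernel. The trade-off: your approach is more pedestrian but transparent, while the paper's rescaling is slicker and reuses the same lattice computation as $(c)\Rightarrow(a)$.
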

\begin{proof}
$(c)\Rightarrow(d)$. Assume that $\widehat{\mu}_{\delta}\rho^{2-2q/p}$ belongs to $L^{\infty}$ for some $\delta\in(0, \alpha]$, then its discrete case $\left\{\widehat{\mu}_{\delta}(b_{k})\rho(b_{k})^{2-2q/p}\right\}_{k}$ is in $l^{\infty}$ for some  $(\rho, \delta)$-lattice $\{b_{k}\}$. Hence the implication $(c)\Rightarrow(d)$ is trivial from (\ref{meanfuncnorm}) with
\begin{align}\label{11lL}
\left\|\left\{\widehat{\mu}_{r}(a_{k}) \rho(a_{k})^{2-2q/p}\right\}_{k}\right\|_{l^{\infty}} \lesssim\left\|\widehat{\mu}_{\delta} \rho^{2-2q/p}\right\|_{L^{\infty}}
\end{align}
for some $(\rho, r)$-lattice $\{a_{k}\}$ with $r\in(0, \alpha]$.

$(b)\Rightarrow(c)$. The implication $(b)\Rightarrow(c)$ follows from estimate (\ref{meanfuncestimate}) with $r_{0}\in(0, \alpha]$ satisfying (\ref{K}). Observe that (\ref{meanfuncnorm11}) is also true for $p=\infty$. These give
\begin{align}\label{11LLL}
\left\|\widehat{\mu}_{\delta} \rho^{2-2q/p}\right\|_{L^{\infty}} \asymp\left\|\widehat{\mu}_{r_{0}} \rho^{2-2q/p}\right\|_{L^{\infty}}
\lesssim\left\|\widetilde{\mu} \rho^{2-2q/p}\right\|_{L^{\infty}}
\end{align}
for any $\delta\in(0, \alpha]$

$(d)\Rightarrow(b)$. Suppose $\left\{\widehat{\mu}_{r}(a_{k})\rho(a_{k})^{2-2q/p}\right\}_{k}\in l^{\infty}$
 for some $(\rho, r)$-lattice $\{a_{k}\}$ with $r$ small enough. Since $r$  is taken small enough, by \cite[Lemma 3.1]{Hu1}, we have
\begin{align}\label{vip}
D^{r}(w)\subset D^{2\alpha}(a),\ \ \text{for}\ w\in D^{r}(a).
\end{align}
Let $s=2p/q$, for any $f\in A_{\varphi}^{s}$, by Lemma \ref{submeanlemma}, \cite[Lemma 3.1]{Hu1}  and (\ref{vip}), we obtain
\begin{align}\label{11submeanvary}
\sup_{w\in D^{r}(a)}\left|f(w)e^{-\varphi(w)}\right|^{s}\leq C\frac{1}{\rho(a)^{2}}\int_{D^{2\alpha}(a)}\left|f(\zeta)
e^{-\varphi(\zeta)}\right|^{s}dA(\zeta).
\end{align}

In views of estimate (\ref{ke21}), we have the following fact
\begin{align}\label{k99}
|k_{z}(w)|^{2} \rho(z)^{2-2q/p} \asymp\left|k_{s, z}(w)\right|^{2},\ \ z, w\in\mathbb{D}.
\end{align}
This together with  Lemma \ref{latticelemma} and  (\ref{11submeanvary}) yields
\begin{align*}
\widetilde{\mu}(z) \rho(z)^{2-2q/p}
&=\int_{\mathbb{D}}\left|k_{z}(w)\right|^{2}\rho(z)^{2-2q/p}e^{-2\varphi(w)}d\mu(w)
\asymp\int_{\mathbb{D}}\left|k_{s, z}(w)\right|^{2}e^{-2\varphi(w)}d\mu(w)\\
&\leq\sum_{k=1}^{\infty}\int_{D^{r}(a_{k})}\left|k_{s, z}(w)\right|^{2}e^{-2\varphi(w)}d\mu(w)\\
&\leq\sum_{k=1}^{\infty}\mu(D^{r}(a_{k}))\left(\sup_{w\in D^{r}(a_{k})}\left|k_{s, z}(w)e^{-\varphi(w)}\right|^{s}\right)^{q/p}\\
&\leq C\sum_{k=1}^{\infty}\widehat{\mu}_{r}(a_{k})\rho(a_{k})^{2-2q/p}
\left(\int_{D^{2\alpha}(a_{k})}\left|k_{s, z}(\zeta)e^{-\varphi(\zeta)}\right|^{s}dA(\zeta)\right)^{q/p}.
\end{align*}
Since $q/p\geq1$, using the fact
\begin{align}\label{fact1}
\sum_{k=1}^{\infty}a_{k}^{\theta}\leq\left(\sum_{k=1}^{\infty}a_{k}\right)^{\theta},\ a_{k}>0,\ 1\leq\theta<\infty,
\end{align}
and  Lemma \ref{latticelemma}, we
deduce
\begin{align*}
\widetilde{\mu}(z) \rho(z)^{2-2q/p}
&\leq C\sup_{k}\widehat{\mu}_{r}(a_{k})\rho(a_{k})^{2-2q/p}
\left(\sum_{k=1}^{\infty}\int_{D^{2\alpha}(a_{k})}\left|k_{s, z}(\zeta)e^{-\varphi(\zeta)}\right|^{s}dA(\zeta)\right)^{q/p}\\
&\leq C N^{q/p}\sup_{k}\widehat{\mu}_{r}(a_{k})\rho(a_{k})^{2-2q/p}\left\|k_{s, z}\right\|_{A_{\varphi}^{s}}^{2}\\
&\lesssim \sup_{k}\widehat{\mu}_{r}(a_{k})\rho(a_{k})^{2-2q/p}.
\end{align*}
This gives $(b)$ with
\begin{align}\label{11Ll}
\left\|\widetilde{\mu}\rho^{2-2q/p}\right\|_{L^{\infty}}\lesssim \left\|\left\{\widehat{\mu}_{r}(a_{k})\rho(a_{k})^{2-2q/p}\right\}_{k}\right\|_{l^{\infty}}.
\end{align}

$(a)\Rightarrow(c)$. Assume that $\mu$ is a $q$-Carleson measure for $A_{\varphi}^{p}$, it follows that% the definition that the identity operator $Id:A_{\varphi}^{p}\rightarrow L_{\mu}^{q}$ is bounded, which implies
\begin{align*}
\int_{\mathbb{D}}\left|f(z)e^{-\varphi(z)}\right|^{q}d\mu(z)\lesssim\|Id\|^{q}_{A_{\varphi}^{p}\rightarrow L_{\mu}^{q}}\|f\|_{A_{\varphi}^{p}}^{q}
\end{align*}
for any $f\in A_{\varphi}^{p}$. In particular, take the normalized reproducing kernel $k_{w}$, where $w\in\mathbb{D}$. For any $\delta\in(0,\alpha]$, we have
\begin{align*}
\int_{D^{\delta}(w)}\left|k_{w}(z)e^{-\varphi(z)}\right|^{q}d\mu(z)
\leq\int_{\mathbb{D}}\left|k_{w}(z)e^{-\varphi(z)}\right|^{q}d\mu(z)
\lesssim\|Id\|^{q}_{A_{\varphi}^{p}\rightarrow L_{\mu}^{q}}\|k_{w}\|_{A_{\varphi}^{p}}^{q}.
\end{align*}
 Combine this with  estimates (\ref{(Carlesonlemma1)}) and (\ref{ke22}), we obtain
\begin{align*}
\frac{1}{\rho(w)^{q}}\int_{D^{\delta}(w)}d\mu(z)
\lesssim\|Id\|^{q}_{A_{\varphi}^{p}\rightarrow L_{\mu}^{q}}\rho(w)^{\frac{2q}{p}-q}.
\end{align*}
Therefore, we arrive at
\begin{align}\label{Pt1}
\left\|\widehat{\mu}_{\delta}\rho^{2-2q/p}\right\|_{L^{\infty}}
\lesssim\|Id\|^{q}_{A_{\varphi}^{p}\rightarrow L_{\mu}^{q}}.
\end{align}
which gives $(a)\Rightarrow(c)$.

$(c)\Rightarrow(a)$. Suppose $\widehat{\mu}_{\delta}\rho^{2-2q/p}\in L^{\infty}$ for some $\delta\in(0, \alpha]$ small enough, we need to show the identity operator $Id$ is bounded from $A_{\varphi}^{p}$ to $L_{\mu}^{q}$. To this end, we fix an $(\rho, \delta)$-lattice $\{w_{k}\}$, then apply Lemma \ref{latticelemma}, (\ref{11submeanvary}) and Fubini's theorem to get
\begin{align}\label{Pt2}
\int_{\mathbb{D}}\left|f(z)e^{-\varphi(z)}\right|^{q}d\mu(z)\nonumber
&\lesssim\sum_{k=1}^{\infty}\int_{D^{\delta}(w_{k})}\left(\frac{1}{\rho(w_{k})^{2}}\int_{D^{2\alpha}(w_{k})}
\left|f(w)e^{-\varphi(w)}\right|^{p}dA(w)\right)^{q/p}d\mu(z)\\\nonumber
&=\sum_{k=1}^{\infty}\left(\int_{D^{2\alpha}(w_{k})}
\left|f(w)e^{-\varphi(w)}\right|^{p}dA(w)\right)^{q/p}
\widehat{\mu}_{\delta}(w_{k})\rho(w_{k})^{2-2q/p}\\
&\leq \left\|\widehat{\mu}_{\delta}\rho^{2-2q/p}\right\|_{L^{\infty}}
\sum_{k=1}^{\infty}\left(\int_{D^{2\alpha}(w_{k})}
\left|f(w)e^{-\varphi(w)}\right|^{p}dA(w)\right)^{q/p}.
\end{align}

Since $q/p\geq1$, using the fact (\ref{fact1})
and the finite multiplicity $N$ of the covering $\left\{D^{2\alpha}(w_{k})\right\}_{k}$, we
obtain
\begin{align*}
\int_{\mathbb{D}}\left|f(z)e^{-\varphi(z)}\right|^{q}d\mu(z)&\lesssim \left\|\widehat{\mu}_{\delta}\rho^{2-2q/p}\right\|_{L^{\infty}}
\left(\sum_{k=1}^{\infty}\int_{D^{2\alpha}(w_{k})}
\left|f(w)e^{-\varphi(w)}\right|^{p}dA(w)\right)^{q/p}\\
&\lesssim N^{q/p}\left\|\widehat{\mu}_{\delta}\rho^{2-2q/p}\right\|_{L^{\infty}}
\|f\|_{A_{\varphi}^{p}}^{q},
\end{align*}
which implies that the identity operator $Id:A_{\varphi}^{p}\rightarrow L_{\mu}^{q}$ is bounded with $\|Id\|_{A_{\varphi}^{p}\rightarrow L_{\mu}^{q}}^{q}\lesssim \left\|\widehat{\mu}_{\delta}
\rho^{2-2q/p}\right\|_{L^{\infty}}$, and this together with estimates (\ref{11lL}), (\ref{11LLL}), (\ref{11Ll}) and (\ref{Pt1}) gives (\ref{carleson1}).
\end{proof}
\begin{theorem}\label{carlesonA}
Let $0<p\leq q<\infty$, $\varphi\in\mathcal{W}_{0}$ with $\frac{1}{\sqrt{\Delta\varphi}}\asymp\rho\in\mathcal{L}_{0}$ and $\mu$ be a finite positive Borel measure on $\mathbb{D}$. Then the following statements are equivalent:\\
(a) $\mu$ is a vanishing $q$-Carleson measure for $A_{\varphi}^{p}$;\\
(b) $\lim_{|z|\rightarrow1^{-}}\widetilde{\mu}(z)\rho(z)^{2-2q/p}=0$;\\
(c) $\lim_{|z|\rightarrow1^{-}}\widehat{\mu}_{\delta}(z)\rho(z)^{2-2q/p}=0$ for some (or any) $\delta\in(0, \alpha]$ small enough;\\
(d) $\lim_{k\rightarrow\infty}\widehat{\mu}_{r}(a_{k})\rho(a_{k})^{2-2q/p}=0$  for some (or any) $(\rho, r)$-lattice $\{a_{k}\}$ with $r\in(0, \alpha]$ small enough.
\end{theorem}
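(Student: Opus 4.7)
The plan is to establish the cyclic chain (c)$\Rightarrow$(d)$\Rightarrow$(b)$\Rightarrow$(c) together with (c)$\Leftrightarrow$(a), paralleling the proof of Theorem \ref{carleson} but with every $L^\infty$ bound replaced by the decay condition $\lim_{|z|\to 1^-}=0$. Each pointwise inequality underlying the bounded version transfers routinely to the vanishing version, so the localization equivalences among (b), (c), (d) are essentially automatic; the real work lies in isolating which terms in the key estimates contribute to the boundary behavior and which are negligible.

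For (b)$\Rightarrow$(c) I would recycle the pointwise inequality (\ref{meanfuncestimate}), namely $\widehat{\mu}_{r_0}(z)\lesssim\widetilde{\mu}(z)$, so that the decay of the right side forces decay of the left at $\delta=r_0$; the passage to arbitrary small $\delta$ is handled by a finite covering of $D^\delta(z)$ by discs of radius $r_0\rho$, as in the proof of (\ref{meanfuncnorm}). For (c)$\Rightarrow$(d) I would note that the lattice points $\{a_k\}$ necessarily accumulate at $\partial\mathbb{D}$ (any compact $K\subset\mathbb{D}$ contains only finitely many, since the separated discs $D^{sr}(a_k)$ have areas $\asymp\rho(a_k)^2$ uniformly bounded below on $K$), so the pointwise comparability of $\widehat{\mu}_r(a_k)$ with $\widehat{\mu}_\delta$ at a nearby point transfers the boundary limit to the sequence.

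The main obstacle is (d)$\Rightarrow$(b). Here I would rerun the estimate in the proof of Theorem \ref{carleson} to obtain
\[
\widetilde{\mu}(z)\rho(z)^{2-2q/p} \lesssim \sum_{k} \widehat{\mu}_r(a_k)\rho(a_k)^{2-2q/p}\, I_k(z),
\]
with $s=2p/q$ and $I_k(z)=\big(\int_{D^{2\alpha}(a_k)}|k_{s,z}(\zeta)e^{-\varphi(\zeta)}|^s dA(\zeta)\big)^{q/p}$, and recall the uniform bound $\sum_k I_k(z)\lesssim N^{q/p}\|k_{s,z}\|_{A_\varphi^s}^{2}\lesssim 1$. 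Given $\varepsilon>0$, hypothesis (d) supplies $R\in(0,1)$ such that $\widehat{\mu}_r(a_k)\rho(a_k)^{2-2q/p}<\varepsilon$ whenever $|a_k|>R$; the set $F=\{k:|a_k|\leq R\}$ is finite. The tail $\sum_{k\notin F}$ is then dominated by $\varepsilon\sum_k I_k(z)\lesssim\varepsilon$, while for each $k\in F$ the function $\zeta\mapsto k_{s,z}(\zeta)e^{-\varphi(\zeta)}$ vanishes uniformly on the compactum $\overline{D^{2\alpha}(a_k)}$ as $|z|\to 1^-$ by Proposition \ref{uniformly}, forcing $I_k(z)\to 0$. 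Taking $\limsup_{|z|\to 1^-}$ and then $\varepsilon\to 0^+$ yields (b).

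For (a)$\Rightarrow$(c) I would exploit compactness of $Id$ through the family $k_{p,w}=K_w/\|K_w\|_{A_\varphi^p}$: it is bounded in $A_\varphi^p$ and converges to zero uniformly on compacta as $|w|\to 1^-$ (Proposition \ref{uniformly}), so $\|k_{p,w}\|_{L^q_\mu}\to 0$. Localizing the integral to $D^\delta(w)$ and invoking the lower kernel estimate (\ref{(Carlesonlemma1)}) together with the identity $k_{p,z}=\rho(z)^{1-2/p}k_z$ (from (\ref{ke21})) extracts $\widehat{\mu}_\delta(w)\rho(w)^{2-2q/p}\to 0$. For the converse (c)$\Rightarrow$(a) I would decompose $Id=Id_R+(Id-Id_R)$, where $Id_R$ is the embedding into $L^q$ of the truncated measure $\chi_{\{|z|\leq R\}}d\mu$: the first summand is compact by a Montel argument (a bounded sequence in $A_\varphi^p$ is normal on $\{|z|\leq R\}$ and uniform convergence there upgrades to $L^q$-convergence against a compactly supported measure), while Theorem \ref{carleson} applied to the tail gives $\|Id-Id_R\|^q\lesssim\sup_{|z|>R'}\widehat{\mu}_\delta(z)\rho(z)^{2-2q/p}\to 0$ as $R\to 1^-$ by (c), for an appropriate $R'<R$. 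Since compact operators form a norm-closed subspace, $Id$ is compact.
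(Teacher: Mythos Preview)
Your proof is correct. For (b)$\Leftrightarrow$(c)$\Leftrightarrow$(d) it coincides with the paper's argument: the pointwise inequalities from Theorem \ref{carleson} are reused, the sum is split into a finite head and a tail, and Proposition \ref{uniformly} disposes of the head.

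For (a)$\Leftrightarrow$(c) you package things a little differently. In (c)$\Rightarrow$(a) the paper works directly with a bounded sequence, extracts a locally uniformly convergent subsequence via Montel, and bounds the $L^q_\mu$-norm of the remainder by splitting over lattice discs; you instead truncate $\mu$ and write $Id$ as an operator-norm limit of the compact embeddings $Id_R$, feeding the tail measure back into Theorem \ref{carleson}. Your route is tidier because it reuses (\ref{carleson1}) as a black box; just note that when $p<1$ or $q<1$ one is in the quasi-Banach regime, where the closedness of compact operators still holds but deserves a word. In (a)$\Rightarrow$(c) the paper deduces $\|f_w\|_{L^q_\mu}\to 0$ by combining the uniform tail condition (\ref{Pt5}) for relatively compact sets with the off-diagonal decay (\ref{(Carlesonlemma2)}); you compress this into the standard lemma that a compact embedding sends bounded, locally uniformly null families to norm-null families. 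The two arguments are equivalent; the paper's is slightly more self-contained, yours more economical.
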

\begin{proof}
The proof of implications  $(b)\Rightarrow(c)$ and $(c)\Rightarrow(d)$ are similar to the same part of Theorem \ref{carleson}.

$(d)\Rightarrow(b)$. Suppose $\widehat{\mu}_{r}(a_{k})\rho(a_{k})^{2-2q/p}\rightarrow0$ as $k\rightarrow\infty$ for some $(\rho, r)$-lattice $\{a_{k}\}$ with $r\in(0, \alpha]$ small enough. Then for any $\varepsilon>0$, there exists some positive integer $K$ such that $\widehat{\mu}_{r}(a_{k})\rho(a_{k})^{2-2q/p}<\varepsilon$ whenever $k > K$. Since $\bigcup_{k=1}^{K}\overline{D^{2\alpha}(a_{k})}$ is a compact subset of $\mathbb{D}$, by Proposition \ref{uniformly} we see that $\left\{k_{s,z}\right\}_{z\in\mathbb{D}}\subseteq A_{\varphi}^{s}$ uniformly converges to 0 on $\bigcup_{k=1}^{K}\overline{D^{2\alpha}(a_{k})}$ as $|z|\rightarrow1^{-}$, where $s=2p/q$. From estimates (\ref{k99}), (\ref{11submeanvary}), (\ref{fact1}) and Lemma \ref{latticelemma}, as $|z|$ is sufficiently close to $1^{-}$, we deduce
\begin{align*}
\widetilde{\mu}(z) \rho(z)^{2-2q/p}
&\asymp\int_{\mathbb{D}}\left|k_{s, z}(w)\right|^{2}e^{-2\varphi(w)}d\mu(w)\\
&\leq\int_{\bigcup_{k=1}^{K}\overline{D^{2\alpha}(a_{k})}}\left|k_{s, z}(w)\right|^{2}e^{-2\varphi(w)}d\mu(w)+\sum_{k=K+1}^{\infty}\mu(D^{r}(a_{k}))\left(\sup_{w\in D^{r}(a_{k})}\left|k_{s, z}(w)e^{-\varphi(w)}\right|^{s}\right)^{q/p}\\
&<\varepsilon+C\sum_{k=K+1}^{\infty}\widehat{\mu}_{r}(a_{k})\rho(a_{k})^{2-2q/p}
\left(\int_{D^{2\alpha}(a_{k})}\left|k_{s, z}(\zeta)e^{-\varphi(\zeta)}\right|^{s}dA(\zeta)\right)^{q/p}\\
&<\varepsilon+C\sup_{k\geq K+1}\widehat{\mu}_{r}(a_{k})\rho(a_{k})^{2-2q/p}
\left(\sum_{k=K+1}^{\infty}\int_{D^{2\alpha}(a_{k})}\left|k_{s, z}(\zeta)e^{-\varphi(\zeta)}\right|^{s}dA(\zeta)\right)^{q/p}\\
&<\varepsilon+CN^{q/p}\|k_{s,z}\|^{2}_{A_{\varphi}^{s}}\varepsilon
\lesssim\varepsilon.
\end{align*}
This gives that $\widetilde{\mu}(z) \rho(z)^{2-2q/p}\rightarrow0$ as $|z|\rightarrow1^{-}$.

$(c)\Rightarrow(a)$. Assume that $\lim_{|z|\rightarrow1^{-}}\widehat{\mu}_{\delta}(z)\rho(z)^{2-2q/p}=0$ for any $\delta\in(0, \alpha]$ small enough. Given any $\varepsilon>0$, there exists a $\delta_{0}\in(0,1)$ such that
\begin{align}\label{Pt3}
\sup_{|z|>\delta_{0}}\widehat{\mu}_{\delta}(z)\rho(z)^{2-2q/p}<\varepsilon.
\end{align}

Consider the  $(\rho,\delta)$-lattice $\{w_{n}\}$. Notice that there exists a $\delta_{0}^{\prime}<1$ with $\delta_{0}^{\prime}\geq \delta_{0}$ such that if some $w_{k}\in\{w_{n}\}$ belongs to the disc $\{z: |z|\leq \delta_{0}\}$, then $D^{\delta}(w_{k})\subseteq\{w: |w|\leq \delta_{0}^{\prime}\}$.
Hence,
\begin{align}\label{Pt4}
\bigcup_{|w_{k}|\leq \delta_{0}}D^{\delta}(w_{k})\subseteq\{w: |w|\leq \delta_{0}^{\prime}\}.
\end{align}

Let $\{f_{n}\}$ be a bounded sequence in $A_{\varphi}^{p}$, then the sequence is uniformly bounded on compact subsets of $\mathbb{D}$ by Lemma \ref{submeanlemma}, hence it forms a normal family by Montel's theorem, see \cite[Chapter 5]{Ahlfors}. Therefore, there exists a subsequence $\{f_{n_{k}}\}$ converging to an analytic function $f$ uniformly on compact subsets of $\mathbb{D}$. Therefore, choose $n_{k}$ big enough, we have
\begin{align}\label{Pt6}
\sup_{|z|\leq \delta_{0}^{\prime}}|f(z)-f_{n_{k}}(z)|<\varepsilon^{\frac{1}{q}}.
\end{align}
And an application of Fatou's lemma implies that $f$ is in $A_{\varphi}^{p}$.

Let $g_{n_{k}}=f-f_{n_{k}}$, then $\{g_{n_{k}}\}$ is a bounded sequence in $A_{\varphi}^{p}$. We have to show the sequence $\{g_{n_{k}}\}$ is compact in $L_{\mu}^{q}$. For this purpose, we divide the integral region into two parts, use (\ref{Pt4}), (\ref{Pt3}), (\ref{Pt6}) and argue as in (\ref{Pt2}) to obtain
\begin{align*}
\int_{\mathbb{D}}\left|g_{n_{k}}(z)e^{-\varphi(z)}\right|^{q}d\mu(z)
&\leq\sum_{|w_{n}|\leq \delta_{0}}\int_{D^{\delta}(w_{n})}
\left|g_{n_{k}}(z)e^{-\varphi(z)}\right|^{q}d\mu(z)+
\sum_{|w_{n}|> \delta_{0}}\int_{D^{\delta}(w_{n})}
\left|g_{n_{k}}(z)e^{-\varphi(z)}\right|^{q}d\mu(z)\\
&\leq\int_{|w|\leq \delta_{0}^{\prime}}
\left|g_{n_{k}}(z)e^{-\varphi(z)}\right|^{q}d\mu(z)+
\sum_{|w_{n}|> \delta_{0}}\int_{D^{\delta}(w_{n})}
\left|g_{n_{k}}(z)e^{-\varphi(z)}\right|^{q}d\mu(z)\\
&\leq\sup_{|z|\leq \delta_{0}^{\prime}}\left|g_{n_{k}}(z)e^{-\varphi(z)}\right|^{q}\mu(\mathbb{D})+
\sum_{|w_{n}|> \delta_{0}}\int_{D^{\delta}(w_{n})}
\left|g_{n_{k}}(z)e^{-\varphi(z)}\right|^{q}d\mu(z)\\
&\leq C\varepsilon+C\|g_{n_{k}}\|_{A_{\varphi}^{p}}^{q}\sup_{|w_{n}|> \delta_{0}}
\widehat{\mu}_{\delta}(w_{n})\rho(w_{n})^{2-2q/p}<C\varepsilon.
\end{align*}
This implies that the identity operator $Id:A_{\varphi}^{p}\rightarrow L_{\mu}^{q}$ is compact, i.e., $\mu$ is a vanishing $q$-Carleson measure for $A_{\varphi}^{p}$.

$(a)\Rightarrow(c)$. Assume that $\mu$ is a vanishing $q$-Carleson measure for $A_{\varphi}^{p}$. For $z\in\mathbb{D}$, let
\begin{align*}
f_{w}(z)=\frac{k_{w}(z)}{\rho(w)^{2/p-1}},\ r_{0}\leq|w|<1,
\end{align*}
where $r_{0}\in(0,1)$. By estimate (\ref{ke22}), we have
\begin{align*}
\sup_{r_{0}\leq|w|<1}\|f_{w}\|_{A_{\varphi}^{p}}\lesssim 1.
\end{align*}
Hence the set
$\{f_{w}:r_{0}\leq|w|<1\}$ is compact in $L_{\mu}^{q}$ by the compactness the identity operator $Id$ from $A_{\varphi}^{p}$ to $L_{\mu}^{q}$.

Therefore
\begin{align}\label{Pt5}
\lim_{r\rightarrow1^{-}}\int_{r<|z|<1}
\left|f_{w}(z)e^{-\varphi(z)}\right|^{q}d\mu(z)=0
\end{align}
uniformly in $w$.

On the other hand, a consequence of estimate (\ref{(Carlesonlemma2)}) shows
\begin{align*}
\left|f_{w}(z)e^{-\varphi(z)}\right|^{p}\lesssim\frac{\rho(w)^{(N+1)p-2}}
{(1-r)^{Np}},\ \ \ |z|\leq r,\  |w|\geq\frac{1+r}{2}.
\end{align*}
for each  constant $N>0$. Since $\rho\in C_{0}$, when $N$ is chosen big enough, we have
\begin{align*}
\lim_{|w|\rightarrow1^{-}}\int_{|z|\leq r}
\left|f_{w}(z)e^{-\varphi(z)}\right|^{q}d\mu(z)=0.
\end{align*}
This together with (\ref{Pt5}) indicates that
\begin{align*}
\lim_{|w|\rightarrow1^{-}}\|f_{w}\|_{L_{\mu}^{q}}=0.
\end{align*}

Therefore, according to estimate (\ref{(Carlesonlemma1)}), we obtain
\begin{align*}
\sup_{|w|>r}\widehat{\mu}_{\delta}(w)\rho(w)^{2-2q/p}
&\asymp\sup_{|w|>r}\frac{1}{\rho(w)^{2q/p-q}}\int_{D^{\delta}(w)}
\left|k_{w}(z)e^{-\varphi(z)}\right|^{q}d\mu(z)\\
&\asymp\sup_{|w|>r}\int_{D^{\delta}(w)}\left|f_{w}(z)e^{-\varphi(z)}\right|^{q}d\mu(z)
\leqslant\sup_{|w|>r}\|f_{w}\|_{L_{\mu}^{q}}^{q}\rightarrow0
\end{align*}
as $r\rightarrow1^{-}$, which gives the statement $(c)$. This completes the proof of Theorem \ref{carlesonA}.
\end{proof}
\begin{remark}
In the  proof of the above two theorems, the condition of $\delta$ (or $r$) small enough is only used in implications $(d)\Rightarrow(b)$ and $(c)\Rightarrow(a)$. In the case of $0<p\leq q<\infty$, it turns out from Theorems \ref{carleson} and \ref{carlesonA} that the notion of (vanishing) $q$-Carleson measure for $A_{\varphi}^{p}$ is independent of the exact values of $p$ and $q$, but depends only on their ratios.
\end{remark}
\begin{theorem}\label{carlesonB}
Let $0<q<p<\infty$, $\varphi\in\mathcal{W}_{0}$ with $\frac{1}{\sqrt{\Delta\varphi}}\asymp\rho\in\mathcal{L}_{0}$ and $\mu$ be a finite positive Borel measure on $\mathbb{D}$. Then the following statements are equivalent:\\
(a) $\mu$ is a vanishing $q$-Carleson measure for $A_{\varphi}^{p}$;\\
(b) $\mu$ is a $q$-Carleson measure for $A_{\varphi}^{p}$;\\
(c) $\widetilde{\mu}\in L^{\frac{p}{p-q}}$;\\
(d) $\widehat{\mu}_{\delta}\in L^{\frac{p}{p-q}}$ for some (or any) $\delta\in(0, \alpha]$ small enough;\\
(e) $\left\{\widehat{\mu}_{r}(a_{k})\rho(a_{k})^{2-2q/p}\right\}_{k}\in l^{\frac{p}{p-q}}$ for some (or any) $(\rho, r)$-lattice $\{a_{k}\}$ with $r\in(0, \alpha]$.\\
Moreover, we have
\begin{align}\label{carleson2}
\|Id\|^{q}_{A_{\varphi}^{p}\rightarrow A_{\varphi}^{q}}
\asymp\|\widetilde{\mu}\|_{L^{\frac{p}{p-q}}}
\asymp\|\widehat{\mu}_{\delta}\|_{L^{\frac{p}{p-q}}}
\asymp\left\|\left\{\widehat{\mu}_{r}(a_{k})
\rho(a_{k})^{2-2q/p}\right\}_{k}\right\|_{l^{\frac{p}{p-q}}}.
\end{align}
\end{theorem}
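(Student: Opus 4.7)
The equivalences (c) $\Leftrightarrow$ (d) $\Leftrightarrow$ (e), together with the three norm equalities in (\ref{carleson2}) that do not involve $\|Id\|$, follow immediately from Proposition \ref{ThreeEquivalents} applied with the exponent $p/(p-q)$ in place of $p$: since $0<q<p$ we have $p/(p-q)\in(1,\infty)$, and $2/(p/(p-q))=2-2q/p$, so the weight $\rho(a_k)^{2-2q/p}$ appearing in (e) is exactly $\rho(a_k)^{2/(p/(p-q))}$. The implication (a) $\Rightarrow$ (b) is trivial. Thus it suffices to establish (b) $\Rightarrow$ (e) and (d) $\Rightarrow$ (a) together with the matching two-sided estimate on $\|Id\|$.

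For (b) $\Rightarrow$ (e), my plan is a Luecking-style randomization. Fix a $(\rho,r)$-lattice $\{a_k\}$, a sequence $\{c_k\}\in\ell^p$, and the Rademacher functions $\{r_k(t)\}$ on $[0,1]$. Put $F_t(z)=\sum_k c_k r_k(t)k_{p,a_k}(z)$; by Proposition \ref{atomicdecomposition}, $\|F_t\|_{A_\varphi^p}\lesssim\|\{c_k\}\|_{\ell^p}$ uniformly in $t$. The $q$-Carleson hypothesis, Fubini's theorem, and Khinchine's inequality then yield
\[\int_{\mathbb{D}}\Bigl(\sum_k |c_k|^2 |k_{p,a_k}(z)|^2\Bigr)^{q/2} e^{-q\varphi(z)}\,d\mu(z) \lesssim \|Id\|_{A_\varphi^p\to L_\mu^q}^q\|\{c_k\}\|_{\ell^p}^q.\]
On each disc $D^r(a_j)$, Lemmas \ref{Carlesonlemma} and \ref{kernelestimate2} give $|k_{p,a_j}(z)|e^{-\varphi(z)}\asymp\rho(a_j)^{-2/p}$; keeping only the $j$-th term in the square-sum and using the finite multiplicity of $\{D^r(a_j)\}$ produces the lower bound $\gtrsim \sum_j|c_j|^q\widehat{\mu}_r(a_j)\rho(a_j)^{2-2q/p}$. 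Writing $b_j=|c_j|^q$ this becomes
\[\sum_j b_j\,\widehat{\mu}_r(a_j)\,\rho(a_j)^{2-2q/p}\lesssim \|Id\|^q\,\|\{b_j\}\|_{\ell^{p/q}},\]
valid for every nonnegative $\{b_j\}\in\ell^{p/q}$; since $p/q>1$, the duality of $\ell^{p/q}$ with $\ell^{p/(p-q)}$ yields (e) together with $\|\{\widehat{\mu}_r(a_k)\rho(a_k)^{2-2q/p}\}\|_{\ell^{p/(p-q)}}\lesssim\|Id\|^q$.

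For (d) $\Rightarrow$ (a), I would combine Proposition \ref{corollarysubmean} with a H\"{o}lder/Montel split. Given a bounded sequence $\{f_n\}\subset A_\varphi^p$, Lemma \ref{submeanlemma} bounds it uniformly on compact subsets of $\mathbb{D}$, so Montel's theorem plus Fatou's lemma produces a subsequence $f_{n_k}$ converging locally uniformly to some $f\in A_\varphi^p$. Set $g_{n_k}=f-f_{n_k}$ and apply Proposition \ref{corollarysubmean} to obtain $\int_{\mathbb{D}}|g_{n_k}e^{-\varphi}|^q\,d\mu\lesssim\int_{\mathbb{D}}|g_{n_k}e^{-\varphi}|^q\widehat{\mu}_\delta\,dA$. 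Given $\varepsilon>0$, pick a compact $E\subset\mathbb{D}$ with $\bigl(\int_{\mathbb{D}\setminus E}\widehat{\mu}_\delta^{p/(p-q)}\,dA\bigr)^{(p-q)/p}<\varepsilon$, available by (d). On $E$ the uniform convergence of $g_{n_k}$ to $0$ handles the integral; on $\mathbb{D}\setminus E$, H\"{o}lder's inequality with conjugate exponents $p/q$ and $p/(p-q)$ bounds the contribution by $\|g_{n_k}\|_{A_\varphi^p}^q\,\varepsilon\lesssim\varepsilon$. The same H\"{o}lder split applied to an arbitrary $f\in A_\varphi^p$ furnishes the complementary estimate $\|Id\|^q\lesssim\|\widehat{\mu}_\delta\|_{L^{p/(p-q)}}$, completing (\ref{carleson2}). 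The main obstacle is Step (b) $\Rightarrow$ (e): the randomization, the pairing of Khinchine's upper bound with the pointwise lower bound on reproducing kernels via Lemma \ref{Carlesonlemma}, and the closing duality in $\ell^{p/q}$ must be coordinated with care, while the rest reduces to Proposition \ref{ThreeEquivalents} and a standard normal-families argument.
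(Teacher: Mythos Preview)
Your proposal is correct and follows essentially the same approach as the paper: (c)$\Leftrightarrow$(d)$\Leftrightarrow$(e) via Proposition~\ref{ThreeEquivalents}, (b)$\Rightarrow$(e) via Luecking's Rademacher/Khinchine randomization together with the pointwise kernel estimate and $\ell^{p/q}$--$\ell^{p/(p-q)}$ duality, and (d)$\Rightarrow$(a) via the sub-mean estimate plus a H\"older split on a compact set and its complement. The only cosmetic differences are that the paper re-derives the content of Proposition~\ref{corollarysubmean} by hand (via Lemma~\ref{submeanlemma} and Fubini, keeping track of the region $\{|z|>r\}$) rather than citing it, and it phrases compactness as ``bounded sequences converging to $0$ on compacta have images tending to $0$'' rather than extracting a Montel subsequence---but the underlying arguments are the same.
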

\begin{proof}
The equivalence among the statements $(c)$, $(d)$ and $(e)$ comes from Proposition \ref{ThreeEquivalents} with
 \begin{align}\label{8787}
\|\widetilde{\mu}\|_{L^{\frac{p}{p-q}}}
\asymp\|\widehat{\mu}_{\delta}\|_{L^{\frac{p}{p-q}}}
\asymp\left\|\left\{\widehat{\mu}_{r}(a_{k})
\rho(a_{k})^{2-2q/p}\right\}_{k}\right\|_{l^{\frac{p}{p-q}}}
\end{align}
for some  $\delta\in(0, \alpha]$ and some $(\rho, r)$-lattice $\{a_{k}\}$ with $r\in(0, \alpha]$. The implication $(a)\Rightarrow(b)$ is trivial. It is enough to show implications $(b)\Rightarrow(e)$ and $(d)\Rightarrow(a)$.

In order to prove $(b)\Rightarrow(e)$, we will borrow the idea from Luecking \cite{Luecking1}. Assume that  $\mu$ is a $q$-Carleson measure for $A_{\varphi}^{p}$. Let $\{w_{k}\}$ be a $(\rho, r_{0})$-lattice with $r_{0}$ in (\ref{K}), for any sequence $\{c_{k}\}\in l^{p}$, where $0<p<\infty$, it follows by the boundedness of $Id$ and Proposition \ref{atomicdecomposition} that
\begin{align*}
\left\|\sum_{k}c_{k}k_{p,w_{k}}(z)\right\|_{L_{\mu}^{q}}^{q}
\lesssim\|Id\|^{q}_{A_{\varphi}^{p}\rightarrow A_{\varphi}^{q}}\left(\sum_{k}|c_{k}|^{p}\right)^{q/p}.
\end{align*}

Replacing $c_{k}$ by $c_{k}r_{k}(t)$ and integrating with respect to $t$ from 0 to 1 in the above, we arrive at
\begin{align}\label{Khinchine0}
\int_{\mathbb{D}}\int_{0}^{1}\left|\sum_{k}c_{k}r_{k}(t)
k_{p,w_{k}}(z)e^{-\varphi(z)}\right|^{q}dtd\mu(z)
\lesssim\|Id\|^{q}_{A_{\varphi}^{p}\rightarrow A_{\varphi}^{q}}\left(\sum_{k}|c_{k}|^{p}\right)^{q/p},\ 0<t<1,
\end{align}
where $r_{k}$ denotes the $k$th Rademacher function (see \cite{Luecking1, Duren1}).
Apply Fubini's theorem and Khinchine's inequality (see \cite{Luecking1}), we obtain
\begin{align}\label{Khinchine}
\int_{\mathbb{D}}\int_{0}^{1}\left|\sum_{k}c_{k}r_{k}(t)
k_{p,w_{k}}(z)e^{-\varphi(z)}\right|^{q}dtd\mu(z)
\nonumber&\gtrsim\int_{\mathbb{D}}\left(\sum_{k}|c_{k}|^{2}
\left|k_{p,w_{k}}(z)e^{-\varphi(z)}\right|^{2}\right)^{\frac{q}{2}}d\mu(z)\\
&\gtrsim\sum_{k}|c_{k}|^{q}\int_{D^{r_{0}}(w_{k})}
\left|k_{p,w_{k}}(z)e^{-\varphi(z)}\right|^{q}d\mu(z),
\end{align}
where in the last inequality we use the fact that each $z\in\mathbb{D}$ belongs to at most $N=N(r_{0})$ of the disc $D^{r_{0}}(w_{k})$.

According to estimates (\ref{ke21})  and  (\ref{K}), we deduce
\begin{align*}
\sum_{k}|c_{k}|^{q}\int_{D^{r_{0}}(w_{k})}\left|k_{p,w_{k}}(z)
e^{-\varphi(z)}\right|^{q}d\mu(z)
&\asymp\sum_{k}|c_{k}|^{q}e^{-q\varphi(w_{k})}\rho(w_{k})^{2q-\frac{2q}{p}}
\int_{D^{r_{0}}(w_{k})}\left|K(z,w_{k})e^{-\varphi(z)}\right|^{q}d\mu(z)\\
&\asymp|c_{k}|^{q}
\widehat{\mu}_{r_{0}}(w_{k})\rho(w_{k})^{2-2q/p}.
\end{align*}
This combines with estimates (\ref{Khinchine0}) and (\ref{Khinchine}) gives
\begin{align*}
\sum_{k}|c_{k}|^{q}
\widehat{\mu}_{r_{0}}(w_{k})\rho(w_{k})^{2-2q/p}
&\lesssim \|Id\|^{q}_{A_{\varphi}^{p}\rightarrow A_{\varphi}^{q}}\left(\sum_{k}|c_{k}|^{p}\right)^{q/p}.
\end{align*}
Let $b_{k}=|c_{k}|^{q}$ for each $k$, then
\begin{align*}
\sum_{k}b_{k}
\widehat{\mu}_{r_{0}}(w_{k})\rho(w_{k})^{2-2q/p}
&\lesssim\|Id\|^{q}_{A_{\varphi}^{p}\rightarrow A_{\varphi}^{q}} \left(\sum_{k}b_{k}^{p/q}\right)^{q/p}.
\end{align*}

Since the sequence $\{c_{k}\}\in l^{p}$, we see that $\{b_{k}\}\in l^{p/q}$. Note that $p/q>1$, by using the classical duality $(l^{p/q})^{\ast}\cong l^{p/p-q}$, we deduce that the sequence
$\left\{\widehat{\mu}_{r_{0}}(w_{k})\rho(w_{k})^{2-2q/p}\right\}_{k}$
belongs to $l^{p/p-q}$. By estimate (\ref{meanfuncnorm}), for any $(\rho, r)$-lattice $\{a_{k}\}$ with $r\in(0, \alpha]$, we have
\begin{align}\label{duality1}
\left\|\left\{\widehat{\mu}_{r}(a_{k})
\rho(a_{k})^{2-2q/p}\right\}_{k}\right\|_{l^{\frac{p}{p-q}}}
\asymp\left\|\left\{\widehat{\mu}_{r_{0}}(w_{k})
\rho(w_{k})^{2-2q/p}\right\}_{k}\right\|_{l^{\frac{p}{p-q}}}
\lesssim\|Id\|^{q}_{A_{\varphi}^{p}\rightarrow A_{\varphi}^{q}} .
\end{align}

Finally, we will prove $(d)\Rightarrow(a)$. Suppose that $\widehat{\mu}_{\delta}\in L^{\frac{p}{p-q}}$ for some $\delta\in(0, \alpha]$ small enough, we will show $Id:A_{\varphi}^{p}\rightarrow L_{\mu}^{q}$ is compact, i.e., if $\{f_{n}\}$ is a bounded sequence in $A_{\varphi}^{p}$ that converges to 0 uniformly on compact subsets of $\mathbb{D}$, then $\lim_{n\rightarrow\infty}\|f_{n}\|_{L_{\mu}^{q}}=0$.

Since $\rho\in\mathcal{L}$, there exists a positive constant $s$ such that $\rho(z)\leq s(1-|z|)$. Fix $\delta<\frac{1}{2s}$, then for any $r>\frac{1}{3}$, we get
\begin{align}\label{subset}
D^{\frac{\delta}{2}}(z)\subset\left\{w\in\mathbb{D}:\ |w|>\frac{r}{2}\right\},\ \text{if} \ |z|>r.
\end{align}
It follows from Lemma \ref{submeanlemma} that
\begin{align}\label{submean1}
\left|f_{n}(z)e^{-\varphi(z)}\right|^{q}\lesssim \frac{1}{\rho(z)^{2}}\int_{D^{\frac{\delta}{2}}(z)}|f_{n}(w)e^{-\varphi(w)}|^{q}dA(w).
\end{align}
Integrating respect to $d\mu$ in (\ref{submean1}), applying Fubini's theorem, (\ref{subset}) and \cite[Lemma 3.1]{Hu1}, we obtain
\begin{align}\label{estimate}
\int_{\{z\in\mathbb{D}:\ |z|>r\}}\left|f_{n}(z)e^{-\varphi(z)}\right|^{q}d\mu(z)\nonumber
&\lesssim\int_{\{z\in\mathbb{D}:\ |z|>r\}}\frac{1}{\rho(z)^{2}}
\int_{D^{\frac{\delta}{2}}(z)}\left|f_{n}(w)e^{-\varphi(w)}\right|^{q}dA(w)d\mu(z)\\
\nonumber&\lesssim\int_{\{w\in\mathbb{D}:\ |w|>\frac{r}{2}\}}\left|f_{n}(w)e^{-\varphi(w)}\right|^{q}
\left(\int_{D^{\delta}(w)}\frac{1}{\rho(z)^{2}}d\mu(z)\right)dA(w)\\
&\lesssim\int_{\{w\in\mathbb{D}:\ |w|>\frac{r}{2}\}}\left|f_{n}(w)e^{-\varphi(w)}\right|^{q}
\widehat{\mu}_{\delta}(w)dA(w).
\end{align}
Since condition $(d)$ holds, for any fixed $\varepsilon>0$, there exists $r_{0}\in(0, 1)$ such that
\begin{align*}
\int_{\{w\in\mathbb{D}:\ |w|>\frac{r_{0}}{2}\}}\widehat{\mu}_{\delta}(w)^{\frac{p}{p-q}}dA(w)
<\varepsilon^{\frac{p}{p-q}}.
\end{align*}
Since $p/q>1$, applying H\"{o}lder's inequality in estimate (\ref{estimate}), we deduce
\begin{align}\label{estimate1}
\int_{\{z\in\mathbb{D}:\ |z|>r_{0}\}}\left|f_{n}(z)e^{-\varphi(z)}\right|^{q}d\mu(z)
\lesssim\|f_{n}\|_{A_{\varphi}^{p}}^{q}\left(\int_{\{w\in\mathbb{D}:\ |w|>\frac{r_{0}}{2}\}}\widehat{\mu}_{\delta}(w)^{\frac{p}{p-q}}
dA(w)\right)^{\frac{p-q}{p}}\lesssim \varepsilon.
\end{align}
On the other hand, since $\{|z|\leq r_{0}\}$ is a compact subset of $\mathbb{D}$, we have
$$\lim_{n\rightarrow\infty}\int_{|z|\leq r_{0}}\left|f_{n}(z)e^{-\varphi(z)}\right|^{q}d\mu(z)=0.$$
 This together with (\ref{estimate1}) yields
$$\lim_{n\rightarrow\infty}\|f_{n}\|_{L_{\mu}^{q}}=0,$$
which proves $(a)$. This completes the proof of Theorem \ref{carlesonB}.
%The quantity equivalence (\ref{carleson2}) can be obtained from a carefully checking of the prove above.
\end{proof}
\section{Bounded and compact Toeplitz Operators}\label{section4}
The next two propositions are of great importance in the proof of the compactness of Toeplitz operator $T_{\mu}$ from $A^{p}_{\varphi}$ to $A^{q}_{\varphi}$.
\begin{proposition}\label{relativelycompact}
Let $0<p<\infty$. A bounded subset $E\subset A^{p}_{\varphi}$ is relatively compact if and only if for any $\varepsilon>0$, there is some $t\in(0, 1)$ such that
\begin{align}\label{relativelycompactformula}
\sup_{f\in E}\int_{t\leq|z|<1}\left|f(z)e^{-\varphi(z)}\right|^{p}dA(z)<\varepsilon.
\end{align}
\end{proposition}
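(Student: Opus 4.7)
The proposition is the standard Kolmogorov--Riesz type criterion for relative compactness adapted to $A_{\varphi}^{p}$: boundedness together with uniformly small ``tails'' near the boundary. My plan is to split the equivalence into the routine necessity direction and the more substantive sufficiency direction, where the key inputs are the submean estimate (Lemma \ref{submeanlemma}), Montel's theorem, and Fatou's lemma.

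For the necessity ($\Rightarrow$), the idea is standard totally bounded-set reasoning. Given $\varepsilon>0$, because $E$ is relatively compact in $A_{\varphi}^{p}$, I cover $E$ by finitely many $A_{\varphi}^{p}$-balls of radius $\eta$ (with $\eta$ chosen depending on $\varepsilon$ and $p$ to absorb the factor from the inequality $|a+b|^{p}\le C_{p}(|a|^{p}+|b|^{p})$) centered at $f_{1},\dots,f_{N}\in E$. Each $f_{k}$ lies in $A_{\varphi}^{p}$, so by the dominated convergence theorem applied to $|f_{k}e^{-\varphi}|^{p}\chi_{\{t\le |z|<1\}}$, the tail integral tends to $0$ as $t\to 1^{-}$. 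Pick $t$ close enough to $1$ so that this holds simultaneously for $k=1,\dots,N$. For a general $f\in E$, write $f=(f-f_{k})+f_{k}$ for the nearest center and conclude \eqref{relativelycompactformula} with the constant $C_{p}$ absorbed into the choice of $\eta$.

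For the sufficiency ($\Leftarrow$), let $\{f_{n}\}\subset E$ be an arbitrary sequence; I must extract a Cauchy subsequence in $A_{\varphi}^{p}$. By Lemma \ref{submeanlemma}, boundedness in $A_{\varphi}^{p}$ gives pointwise bounds $|f_{n}(z)|\lesssim e^{\varphi(z)}\rho(z)^{-2/p}\|f_{n}\|_{A_{\varphi}^{p}}$, so $\{f_{n}\}$ is uniformly bounded on compact subsets of $\mathbb{D}$. Montel's theorem then yields a subsequence $\{f_{n_{k}}\}$ converging uniformly on compacta to some $f\in H(\mathbb{D})$; Fatou's lemma applied to $|f_{n_{k}}e^{-\varphi}|^{p}$ ensures $f\in A_{\varphi}^{p}$, and the same argument applied to the integrand restricted to $\{t\le |z|<1\}$ gives
\begin{equation*}
\int_{t\le |z|<1}|f(z)e^{-\varphi(z)}|^{p}\,dA(z)\le\liminf_{k\to\infty}\int_{t\le |z|<1}|f_{n_{k}}(z)e^{-\varphi(z)}|^{p}\,dA(z),
\end{equation*}
so $f$ inherits the uniform tail bound from $E$. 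Now, given $\varepsilon>0$, choose $t\in(0,1)$ by \eqref{relativelycompactformula} so that both $f$ and all $f_{n_{k}}$ have tail integrals past $t$ bounded by $\varepsilon/(2C_{p})$. On the compact set $\{|z|\le t\}$ the weight $e^{-p\varphi}$ is bounded and $f_{n_{k}}\to f$ uniformly, so the interior integral of $|f_{n_{k}}-f|^{p}e^{-p\varphi}$ tends to $0$. Using $|a-b|^{p}\le C_{p}(|a|^{p}+|b|^{p})$ on the tail and adding the interior contribution, $\|f_{n_{k}}-f\|_{A_{\varphi}^{p}}^{p}<\varepsilon$ for all large $k$.

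The main obstacle will be bookkeeping rather than anything deep: handling the cases $0<p<1$ and $p\ge 1$ uniformly via the constant $C_{p}$, and applying Fatou's lemma at the right moment to transfer the tail estimate from the sequence to the limit function $f$ (since $f$ is not a priori in $E$). Once those two points are handled, the interior/tail split closes the argument.
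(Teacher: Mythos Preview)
Your proposal is correct and follows the standard Kolmogorov--Riesz/Montel argument that the paper has in mind: the paper does not write out a proof at all but simply cites \cite[Lemma 3.2]{Hu3}, whose proof proceeds exactly along the lines you describe (total boundedness for necessity; Montel plus an interior/tail split with Fatou's lemma for sufficiency). Your handling of the $0<p<1$ case via the constant $C_p$ and your observation that the limit $f$ need not lie in $E$ but still inherits the uniform tail bound via Fatou are precisely the small technical points one has to track, and you have them right.
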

\begin{proof}
Similar to the proof of \cite[Lemma 3.2]{Hu3}
\end{proof}

\begin{proposition}\label{welldefined}
Let $\mu$ be a positive Borel measure on $\mathbb{D}$ satisfying $\widehat{\mu}_{\delta}\rho^{t}\in L^{\infty}$ for some $\delta\in(0, \alpha]$ and $t\in\mathbb{R}$. Then $T_{\mu}$ is well-defined on $A^{p}_{\varphi}$ for $0<p<\infty$. Further, for $R\in(0,1)$, the Toeplitz operator $T_{\mu_{R}}$ is compact from $A^{p}_{\varphi}$ to $A^{q}_{\varphi}$ for $0 < p, q < \infty$, where $\mu_{R}$ is defined to be
\begin{align}\label{muRV}
\mu_{R}(E)=\mu(E\cap\overline{D(0,R)})\ \ \text{for}\ E\subseteq\mathbb{D}\ \text{measurable}.
\end{align}
\end{proposition}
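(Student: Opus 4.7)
The plan splits into two parts, handled separately.

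For well-definedness, I fix $f \in A^p_\varphi$ and $z \in \mathbb{D}$ and aim to show $\int_\mathbb{D} |f(w)K(z,w)|e^{-2\varphi(w)}\,d\mu(w) < \infty$. Lemma \ref{submeanlemma} gives the pointwise bound $|f(w)|e^{-\varphi(w)} \lesssim \|f\|_{A^p_\varphi}\rho(w)^{-2/p}$, and Lemma \ref{kernelestimate1} gives $|K(z,w)| \lesssim \frac{e^{\varphi(z)+\varphi(w)}}{\rho(z)\rho(w)}e^{-\sigma d_\rho(z,w)}$. Taking a $(\rho,\delta)$-lattice $\{a_k\}$, the hypothesis supplies $\mu(D^\delta(a_k)) \lesssim \rho(a_k)^{2-t}$, and \cite[Lemma 3.1]{Hu1} gives $\rho(w)\asymp\rho(a_k)$, $\varphi(w)\asymp\varphi(a_k)$, $d_\rho(z,w)\asymp d_\rho(z,a_k)$ on each $D^\delta(a_k)$. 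Summing, one obtains
\[
|T_\mu f(z)| \lesssim \frac{\|f\|_{A^p_\varphi}\,e^{\varphi(z)}}{\rho(z)}\sum_k \rho(a_k)^{1-t-2/p}\,e^{-\sigma d_\rho(z,a_k)},
\]
which is finite for every fixed $z$ by standard lattice estimates (cf.\ \cite[Corollary 3.1]{Hu1}). Hence $T_\mu f(z)$ makes sense pointwise, and analyticity follows from dominated convergence.

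For the compactness of $T_{\mu_R}$, I would apply Proposition \ref{relativelycompact}. Let $\{f_n\}$ be a bounded sequence in $A^p_\varphi$. By Lemma \ref{submeanlemma}, $\{f_n\}$ is locally uniformly bounded, so Montel's theorem provides a subsequence $\{f_{n_k}\}$ converging locally uniformly to some $f$, which lies in $A^p_\varphi$ by Fatou. It suffices to prove $\|T_{\mu_R}(f_{n_k}-f)\|_{A^q_\varphi}\to 0$, and I split this norm into $\int_{|z|\le t}+\int_{t<|z|<1}$. On $\{|z|\le t\}$, the uniform convergence $f_{n_k}\to f$ on $\overline{D(0,R)}$ together with the uniform boundedness of $K(z,w)e^{-2\varphi(w)}$ on the compact set $\{|z|\le t\}\times\overline{D(0,R)}$ makes $T_{\mu_R}(f_{n_k}-f)\to 0$ uniformly for $|z|\le t$, so this piece vanishes as $k\to\infty$.

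For the tail, the decisive tool is the sharper kernel bound invoked in the proof of Proposition \ref{uniformly} (namely \cite[Theorem 3.3]{Hu1}),
\[
|K(z,w)|\lesssim \frac{e^{\varphi(z)+\varphi(w)}}{\rho(z)\rho(w)}\left(\frac{\min\{\rho(z),\rho(w)\}}{|z-w|}\right)^M,\quad M>0\ \text{arbitrary}.
\]
Because $\mu_R$ is supported in $\overline{D(0,R)}$, both $\rho(w)$ and $e^{\varphi(w)}$ are controlled on the relevant set; for $|z|$ near $1$ one has $\min=\rho(z)$ and $|z-w|\ge c_R>0$; and $|f_{n_k}(w)|e^{-\varphi(w)}\lesssim_R\|f_{n_k}\|_{A^p_\varphi}$ by Lemma \ref{submeanlemma}. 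These combine to give $|T_{\mu_R}g(z)e^{-\varphi(z)}|\lesssim_{R,M}\|g\|_{A^p_\varphi}\rho(z)^{M-1}$ for $|z|$ close to $1$, and taking $M$ large makes $\int_{t<|z|<1}\rho(z)^{q(M-1)}\,dA(z)$ arbitrarily small uniformly in $k$ as $t\to 1^-$. The main obstacle is precisely this tail control: the standard kernel bound carries a $1/\rho(z)$ factor that blows up at $\partial\mathbb{D}$, and it is only beaten by exploiting the flexibility of the power-decay estimate from \cite[Theorem 3.3]{Hu1}, crucially together with the fact that $\mu_R$ is supported compactly in $\mathbb{D}$ so $|z-w|$ stays bounded below.
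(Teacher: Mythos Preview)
Your proof is essentially correct but differs from the paper's in both parts, and there is one cosmetic slip worth flagging.

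In the well-definedness part, your claim that $\varphi(w)\asymp\varphi(a_k)$ on $D^\delta(a_k)$ is neither true in general nor needed: once you insert the kernel bound $|K(z,w)|\lesssim \frac{e^{\varphi(z)+\varphi(w)}}{\rho(z)\rho(w)}e^{-\sigma d_\rho(z,w)}$ and the pointwise bound $|f(w)|e^{-\varphi(w)}\lesssim\rho(w)^{-2/p}\|f\|_{A^p_\varphi}$, the factor $e^{\varphi(w)}$ cancels against $e^{-2\varphi(w)}$ and $e^{-\varphi(w)}$, so only $\rho$ and $d_\rho$ survive. Likewise $d_\rho(z,w)\asymp d_\rho(z,a_k)$ is not literally correct, but the triangle inequality gives $|d_\rho(z,w)-d_\rho(z,a_k)|\le d_\rho(w,a_k)\lesssim 1$, which is enough. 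The paper avoids the lattice altogether here: it invokes Proposition~\ref{corollarysubmean} to replace $d\mu$ by $\widehat\mu_\delta\,dA$ in one stroke, then uses $\widehat\mu_\delta\lesssim\rho^{-t}$ and \cite[Corollary 3.1]{Hu1}. Your lattice sum reaches the same endpoint with slightly more bookkeeping.

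For compactness of $T_{\mu_R}$ the two arguments genuinely diverge. You prove compactness from the definition: extract a locally uniformly convergent subsequence via Montel, then kill the tail using the polynomial kernel decay $\bigl(\min\{\rho(z),\rho(w)\}/|z-w|\bigr)^M$ from \cite[Theorem 3.3]{Hu1}, exploiting that on $\operatorname{supp}\mu_R$ the quantity $|z-w|$ is bounded below while $\rho(z)\to 0$. The paper instead applies Proposition~\ref{relativelycompact} directly to the image of the unit ball: it bounds $|T_{\mu_R}f(z)e^{-\varphi(z)}|$ by $\|f\|_{A^p_\varphi}\rho(z)^{-1}\int_{|w|\le R}e^{-\sigma d_\rho(z,w)}dA(w)$ and then shows, via the exponential $d_\rho$-decay and \cite[(23)]{Hu1}, that $\sup_{|w|\le R,\,|z|\ge\eta}e^{-\frac{\sigma}{2}d_\rho(z,w)}\to 0$ as $\eta\to 1^-$, handling the cases $q>1$ and $0<q\le 1$ separately. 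Your route is arguably more elementary since it sidesteps the $d_\rho$-metric entirely; the paper's route is cleaner in that it never needs Montel and gives the uniform tail estimate in one shot.
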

\begin{proof}
Assume that $\widehat{\mu}_{\delta}\rho^{t}\in L^{\infty}$ for some $\delta\in(0, \alpha]$ and $t\in\mathbb{R}$. In views of Lemma \ref{submeanlemma}, for $f\in A^{p}_{\varphi}$, we have
\begin{align}\label{111}
|f(z)|e^{-\varphi(z)}\lesssim\rho(z)^{-2/p}\|f\|_{A^{p}_{\varphi}},\ \ z\in\mathbb{D}.
\end{align}
Applying Proposition \ref{corollarysubmean} to the weight $2\varphi$ and the analytic function $K(\cdot, z)f(\cdot)$, we obtain
\begin{align}\label{222}
|T_{\mu}f(z)|\lesssim\int_{\mathbb{D}}|K(w,z)|
|f(w)|e^{-2\varphi(w)}\widehat{\mu}_{\delta}(w)dA(w).
\end{align}
This together with estimates (\ref{111}), (\ref{ke11}) and \cite[Corollary 3.1]{Hu1} yields that
\begin{align*}
|T_{\mu}f(z)|&\lesssim\|f\|_{A^{p}_{\varphi}}\left\|\widehat{\mu}_{\delta}
\rho^{t}\right\|_{L^{\infty}}\int_{\mathbb{D}}\rho(w)^{-t-2/p}
|K(w,z)|e^{-\varphi(w)}dA(w)\\
&\lesssim e^{\varphi(z)}\rho(z)^{-1}\left\|\widehat{\mu}_{\delta}
\rho^{t}\right\|_{L^{\infty}}\|f\|_{A^{p}_{\varphi}}\int_{\mathbb{D}}
\rho(w)^{-1-t-2/p}e^{-\sigma d_{\rho}(w,z)}dA(w)\\
&\lesssim e^{\varphi(z)}\rho(z)^{-t-2/p}\left\|\widehat{\mu}_{\delta}
\rho^{t}\right\|_{L^{\infty}}\|f\|_{A^{p}_{\varphi}}<\infty.
\end{align*}

Next we will prove the compactness of $T_{\mu_{R}}$. For any $f\in A^{p}_{\varphi}$ and $R<1$, by  estimate (\ref{111}) and estimate (\ref{ke11}), we obtain
\begin{align*}
\left|T_{\mu_{R}}f(z)\right|&\leq\int_{\mathbb{D}}|K(z,w)|
|f(w)|e^{-2\varphi(w)}d\mu_{R}(w)\\
&\leq\int_{\mathbb{D}}\chi_{\overline{D(0,R)}}(w)|K(z,w)|
|f(w)|e^{-2\varphi(w)}d\mu(w)\\
&\lesssim\int_{|w|\leq R}|K(z,w)|
|f(w)|e^{-2\varphi(w)}\widehat{\mu}_{\delta}(w)dA(w)\\
&\lesssim\left\|f\right\|_{A_{\varphi}^{p}}\int_{|w|\leq R}|K(z,w)|
e^{-\varphi(w)}\rho(w)^{-\frac{2}{p}}
\widehat{\mu}_{\delta}(w)dA(w)\\
&\lesssim \left\|f\right\|_{A_{\varphi}^{p}}
\frac{e^{\varphi(z)}}{\rho(z)}\int_{|w|\leq R}
\rho(w)^{-\frac{2}{p}-1}\widehat{\mu}_{\delta}(w)
e^{-\sigma d_{\rho}(z,w)}dA(w)\\
&\lesssim\sup_{|w|\leq R}\widehat{\mu}_{\delta}(w)\rho(w)^{-\frac{2}{p}-1}
\left\|f\right\|_{A_{\varphi}^{p}}
\frac{e^{\varphi(z)}}{\rho(z)}\int_{|w|\leq R}e^{-\sigma d_{\rho}(z,w)}dA(w)\\
&\lesssim\left\|f\right\|_{A_{\varphi}^{p}}
\frac{e^{\varphi(z)}}{\rho(z)}\int_{|w|\leq R}e^{-\sigma d_{\rho}(z,w)}dA(w).
\end{align*}

For $q>1$ and a fixed $\eta\in (0,1)$, apply H\"{o}lder's inequality and \cite[Corollary 3.1]{Hu1}, we deduce
\begin{align}\label{eta1}
\nonumber&\left(\int_{\mathbb{D}\backslash D(0,\eta)}\left|T_{\mu_{R}}f(z)e^{-\varphi(z)}\right|^{q}dA(z)\right)^{\frac{1}{q}}\\
\nonumber&\lesssim\left\|f\right\|_{A_{\varphi}^{p}}
\left(\int_{|w|\leq R}dA(w)\int_{\mathbb{D}\backslash D(0,\eta)}\frac{e^{-q\sigma d_{\rho}(z,w)}}{\rho(z)^{q}}dA(z)\right)^{\frac{1}{q}}\\
\nonumber&\lesssim\left\|f\right\|_{A_{\varphi}^{p}}
\sup_{|w|\leq R,\ \eta\leq|z|<1}e^{-\frac{\sigma}{2}d_{\rho}(z,w)}
\left(\int_{|w|\leq R}dA(w)\int_{\mathbb{D}\backslash D(0,\eta)}\frac{e^{-\frac{q}{2}\sigma d_{\rho}(z,w)}}{\rho(z)^{q}}dA(z)\right)^{\frac{1}{q}}\\
\nonumber&\lesssim\left\|f\right\|_{A_{\varphi}^{p}}
\sup_{|w|\leq R,\ \eta\leq|z|<1}e^{-\frac{\sigma}{2}d_{\rho}(z,w)}
\left(\int_{|w|\leq R}\rho(w)^{2-q}dA(w)\right)^{\frac{1}{q}}\\
&\lesssim\left\|f\right\|_{A_{\varphi}^{p}}
\sup_{|w|\leq R,\ \eta\leq|z|<1}e^{-\frac{\sigma}{2}d_{\rho}(z,w)}.
\end{align}

For $0<q<1$, using \cite[Corollary 3.1]{Hu1} again, we obtain
\begin{align}\label{eta2}
\nonumber&\left(\int_{\mathbb{D}\backslash D(0,\eta)}\left|T_{\mu_{R}}f(z)e^{-\varphi(z)}\right|^{q}dA(z)\right)^{\frac{1}{q}}\\
\nonumber&\lesssim\left\|f\right\|_{A_{\varphi}^{p}}
\sup_{|w|\leq R,\ \eta\leq|z|<1}e^{-\frac{\sigma}{2}d_{\rho}(z,w)}
\left(\int_{\mathbb{D}\backslash D(0,\eta)}\rho(z)^{-q}dA(z)\int_{|w|\leq R}
e^{-\frac{\sigma}{2} d_{\rho}(z,w)}dA(w)\right)^{\frac{1}{q}}\\
\nonumber&\lesssim\left\|f\right\|_{A_{\varphi}^{p}}
\sup_{|w|\leq R,\ \eta\leq|z|<1}e^{-\frac{\sigma}{2}d_{\rho}(z,w)}
\left(\int_{\mathbb{D}\backslash D(0,\eta)}\rho(z)^{2-q}dA(z)\right)^{\frac{1}{q}}\\
&\lesssim\left\|f\right\|_{A_{\varphi}^{p}}
\sup_{|w|\leq R,\ \eta\leq|z|<1}e^{-\frac{\sigma}{2}d_{\rho}(z,w)}.
\end{align}

It follows by \cite[(23)]{Hu1}, we have
$$\sup_{|w|\leq R,\ \eta\leq|z|<1}e^{-\frac{\sigma}{2}d_{\rho}(z,w)}\rightarrow0,\ \text{as}\ \eta\rightarrow1^{-}.$$

Therefore, $T_{\mu_{R}}$ is compact from $A^{p}_{\varphi}$ to $A^{q}_{\varphi}$ in views of (\ref{eta1}) and (\ref{eta2}).  This completes
the proof.
\end{proof}
Now, we embark on describing those $\mu \geq0$ for which the
induced Toeplitz operators $T_{\mu}$ are bounded (or compact) from one Bergman
space $A_{\varphi}^{p}$ to another $A_{\varphi}^{p}$, where $0 < p,q <\infty$.
\begin{theorem}\label{Toeplitzoperator}
Let $0<p\leq q<\infty$, $\varphi\in\mathcal{W}_{0}$ with $\frac{1}{\sqrt{\Delta\varphi}}\asymp\rho\in\mathcal{L}_{0}$ and $\mu$ be a finite positive Borel measure on $\mathbb{D}$. Then the following statements are equivalent:\\
(a) $T_{\mu}: A_{\varphi}^{p}\rightarrow A_{\varphi}^{q}$ is bounded;\\
(b) $\widetilde{\mu}\rho^{2(p-q)/(pq)}\in L^{\infty}$;\\
(c) $\widehat{\mu}_{\delta}\rho^{2(p-q)/(pq)}\in L^{\infty}$ for some (or any) $\delta\in(0, \alpha]$;\\
(d) $\left\{\widehat{\mu}_{r}(a_{k})\rho(a_{k})^{2(p-q)/(pq)}\right\}_{k}\in l^{\infty}$ for some (or any) $(\rho, r)$-lattice $\{a_{k}\}$ with $r\in(0, \alpha]$.\\
Moreover, we have
\begin{align}\label{ALLl}
\|T_{\mu}\|_{A_{\varphi}^{p}\rightarrow A_{\varphi}^{q}}\asymp\left\|\widetilde{\mu}\rho^{2(p-q)/(pq)}\right\|_{L^{\infty}}
\asymp\left\|\widehat{\mu}_{\delta}\rho^{2(p-q)/(pq)}\right\|_{L^{\infty}}
\asymp\left\|\left\{\widehat{\mu}_{r}(a_{k})\rho(a_{k})^{2(p-q)/(pq)}\right\}_{k}\right\|_{l^{\infty}}.
\end{align}
\end{theorem}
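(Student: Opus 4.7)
The plan is to prove the chain of implications $(a)\Rightarrow(b)$, $(b)\Leftrightarrow(c)\Leftrightarrow(d)$, and $(c)\Rightarrow(a)$, together with the norm comparison~(\ref{ALLl}). The equivalences $(b)\Leftrightarrow(c)\Leftrightarrow(d)$ can be obtained by replaying the arguments for $(c)\Rightarrow(d)$, $(b)\Rightarrow(c)$, and $(d)\Rightarrow(b)$ given in Theorem~\ref{carleson}: each of those rests only on the submean estimate (Lemma~\ref{submeanlemma}), the kernel bounds in Lemma~\ref{kernelestimate1} and Lemma~\ref{kernelestimate2}, and the lattice comparison~(\ref{meanfuncnorm}), none of which cares about the specific value of the exponent on $\rho$. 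Replacing $2-2q/p$ throughout by $2(p-q)/(pq)$ therefore delivers the three equivalences here and the comparable-norm portion of~(\ref{ALLl}).

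For $(a)\Rightarrow(b)$, I would test $T_{\mu}$ on the normalised reproducing kernel $k_{w}$. The reproducing property gives
\begin{align*}
T_{\mu}k_{w}(w)=\|K_{w}\|_{A_{\varphi}^{2}}\,\widetilde{\mu}(w).
\end{align*}
Applying Lemma~\ref{submeanlemma} at $z=w$ with exponent $q$ yields $|T_{\mu}k_{w}(w)|^{q}e^{-q\varphi(w)}\lesssim\rho(w)^{-2}\|T_{\mu}k_{w}\|_{A_{\varphi}^{q}}^{q}\le\rho(w)^{-2}\|T_{\mu}\|^{q}_{A_{\varphi}^{p}\to A_{\varphi}^{q}}\|k_{w}\|_{A_{\varphi}^{p}}^{q}$, and substituting $\|K_{w}\|_{A_{\varphi}^{2}}\asymp e^{\varphi(w)}\rho(w)^{-1}$ together with $\|k_{w}\|_{A_{\varphi}^{p}}\asymp\rho(w)^{2/p-1}$ from Lemma~\ref{kernelestimate2} reduces the inequality, after simplifying the exponents of $\rho$, to $\widetilde{\mu}(w)\rho(w)^{2(p-q)/(pq)}\lesssim\|T_{\mu}\|_{A_{\varphi}^{p}\to A_{\varphi}^{q}}$ uniformly in $w$. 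This is both statement~(b) and the lower half of~(\ref{ALLl}).

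The implication $(c)\Rightarrow(a)$ splits according to the size of $q$. When $q\ge 1$, I would use the duality pairing $\langle h,g\rangle=\int h\bar{g}e^{-2\varphi}dA$ between $A_{\varphi}^{q}$ and $A_{\varphi}^{q'}$. Fubini together with the reproducing property identifies
\begin{align*}
\langle T_{\mu}f,g\rangle=\int_{\mathbb{D}}f(w)\overline{g(w)}e^{-2\varphi(w)}d\mu(w).
\end{align*}
H\"older's inequality on the right-hand side with the conjugate pair $s=1+p/q'$ and $s'=1+q'/p$ gives two Carleson-type factors. A direct calculation shows $2-2s/p=2-2s'/q'=2(p-q)/(pq)$, while $p\le q$ forces $p\le s$ and $q'\le s'$; hence Theorem~\ref{carleson} applies both with the exponent pair $(p,s)$ and with $(q',s')$, and each of the two resulting Carleson constants is controlled by $\|\widehat{\mu}_{\delta}\rho^{2(p-q)/(pq)}\|_{L^{\infty}}$. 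Taking a supremum over $g$ yields $\|T_{\mu}f\|_{A_{\varphi}^{q}}\lesssim\|\widehat{\mu}_{\delta}\rho^{2(p-q)/(pq)}\|_{L^{\infty}}\|f\|_{A_{\varphi}^{p}}$.

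When $q<1$ (so $p\le q<1$), duality is unavailable and I would argue directly. Fixing a $(\rho,r)$-lattice $\{w_{k}\}$ from Lemma~\ref{latticelemma}, one bounds
\begin{align*}
|T_{\mu}f(z)|\le\sum_{k}\mu(D^{r}(w_{k}))\,M_{k}\,N_{k}(z),\qquad M_{k}=\sup_{D^{r}(w_{k})}|f|e^{-\varphi},\quad N_{k}(z)=\sup_{D^{r}(w_{k})}|K(z,\cdot)|e^{-\varphi}.
\end{align*}
Raising to the power $q<1$ and invoking $(\sum a_{k})^{q}\le\sum a_{k}^{q}$ separates the sum, after which I would integrate in $z$: Lemma~\ref{kernelestimate2} applied to $\|K_{w}\|_{A_{\varphi}^{q}}$ gives $\int N_{k}(z)^{q}e^{-q\varphi(z)}dA(z)\lesssim\rho(w_{k})^{2-2q}$; the factor $\mu(D^{r}(w_{k}))^{q}=\widehat{\mu}_{r}(w_{k})^{q}\rho(w_{k})^{2q}$ is handled by~(d); and $M_{k}^{p}\rho(w_{k})^{2}$ is controlled by the $L^{p}$-average of $|f|^{p}e^{-p\varphi}$ over $D^{2\alpha}(w_{k})$ via Lemma~\ref{submeanlemma}. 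Once the powers of $\rho(w_{k})$ cancel, the residual sum is of the shape $\sum_{k}\bigl(\int_{D^{2\alpha}(w_{k})}|f|^{p}e^{-p\varphi}dA\bigr)^{q/p}$, which because $q/p\ge 1$ is dominated by $\bigl(\sum_{k}\int_{D^{2\alpha}(w_{k})}|f|^{p}e^{-p\varphi}dA\bigr)^{q/p}\lesssim\|f\|_{A_{\varphi}^{p}}^{q}$ using the finite multiplicity of the covering. The main obstacle, in my view, is precisely this bookkeeping in the range $q<1$: the submean exponents for $f$ and for $K(z,\cdot)$ must be chosen so that the intermediate powers of $\rho(w_{k})$ collapse cleanly and leave the $L^{p}$-average structure intact.
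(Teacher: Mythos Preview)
Your proof is correct. The equivalences $(b)\Leftrightarrow(c)\Leftrightarrow(d)$ and the implication $(a)\Rightarrow(b)$ match the paper's arguments essentially verbatim. The route you take for $(c)\Rightarrow(a)$, however, differs from the paper's. For $q>1$ the paper does not use duality: instead it applies Proposition~\ref{corollarysubmean} to the analytic function $K(\cdot,z)f(\cdot)$ to get the pointwise bound $|T_\mu f(z)|\lesssim\int_{\mathbb{D}}|K(w,z)||f(w)|e^{-2\varphi(w)}\widehat{\mu}_\delta(w)\,dA(w)$, and then runs a Schur-type argument with the kernel weight $|K(w,z)|e^{-\varphi(w)-\varphi(z)}$ (using $\|K_z\|_{A_\varphi^1}\asymp e^{\varphi(z)}$) to reach the intermediate inequality $\|T_\mu f\|_{A_\varphi^q}^q\lesssim\int_{\mathbb{D}}|f|^q e^{-q\varphi}\widehat{\mu}_\delta^q\,dA$; only afterwards does it absorb the extra $q-p$ powers of $|f|e^{-\varphi}$ via the point-evaluation bound $|f|e^{-\varphi}\lesssim\rho^{-2/p}\|f\|_{A_\varphi^p}$. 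Your argument is more economical because it reduces directly to two instances of Theorem~\ref{carleson}, but it leans on the duality $(A_\varphi^q)^*\cong A_\varphi^{q'}$ (available from the reference~\cite{Hu1}) and on the Fubini identity $\langle T_\mu f,g\rangle=\int f\bar g\,e^{-2\varphi}d\mu$ beyond the $A_\varphi^2$ setting of Lemma~\ref{classicalequation}; both are fine, but you should flag them. One small adjustment: at $q=1$ your duality step needs $q'=\infty$, which is awkward; since your lattice computation for $q<1$ works without change for $q\le1$, simply shift $q=1$ into that second case, as the paper does.
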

\begin{proof}
The implications $(c)\Rightarrow(d)$, $(b)\Rightarrow(c)$ and $(d)\Rightarrow(b)$ can be obtained by a similar discussion as the corresponding proof of Theorem \ref{carleson}. Moreover, we have the following equivalence
\begin{align}\label{Tof1}
\left\|\widetilde{\mu}\rho^{2(p-q)/(pq)}\right\|_{L^{\infty}}
\asymp\left\|\widehat{\mu}_{\delta}\rho^{2(p-q)/(pq)}\right\|_{L^{\infty}}
\asymp\left\|\left\{\widehat{\mu}_{r}(a_{k})\rho(a_{k})^{2(p-q)/(pq)}\right\}_{k}\right\|_{l^{\infty}}
\end{align}
for some  $\delta\in(0, \alpha]$ and some $(\rho, r)$-lattice $\{a_{k}\}$ with $r\in(0, \alpha]$.

$(a)\Rightarrow(b)$. In views of estimate (\ref{ke21}) and Lemma \ref{submeanlemma}, we obtain
\begin{align}\label{666}
\widetilde{\mu}(z) \rho(z)^{2(p-q) /(p q)}
\nonumber&=\int_{\mathbb{D}}|k_{z}(w)|^{2}e^{-2\varphi(w)}d\mu(w)\rho(z)^{2(p-q) /(p q)}\\
\nonumber&\asymp\rho(z)^{2/q}\int_{\mathbb{D}}k_{p, z}(w)K(z,w)e^{-2\varphi(w)}d\mu(w)e^{-\varphi(z)}\\
\nonumber&\leq\rho(z)^{2/q}|T_{\mu}k_{p,z}(z)|e^{-\varphi(z)}\\
&\lesssim\left(\int_{D^{\frac{\delta}{2}}(z)}|T_{\mu}k_{p,z}(z)e^{-\varphi(z)}|^{q}dA(w)\right)^{1/q},
\end{align}
where $\delta\in(0, \alpha]$. Hence
\begin{align}\label{LAA}
\left\|\widetilde{\mu}(z) \rho(z)^{2(p-q) /(p q)}\right\|_{L^{\infty}}\lesssim\left\|T_{\mu}k_{p,z}(z)\right\|_{A_{\varphi}^{q}}
\lesssim\left\|T_{\mu}\right\|_{A_{\varphi}^{p}\rightarrow A_{\varphi}^{q}},
\end{align}
which gives $(a)\Rightarrow(b)$.

$(c)\Rightarrow(a)$. Assume that $\widehat{\mu}_{\delta}\rho^{2(p-q)/(pq)}$ is in $L^{\infty}$ for some $\delta\in(0, \alpha]$, then $T_{\mu}$ is well-defined on $A_{\varphi}^{p}$  according to Proposition \ref{welldefined}. We assert that
\begin{align}\label{333}
\|T_{\mu}f\|_{A_{\varphi}^{q}}^{q}\lesssim\int_{\mathbb{D}}|f(w)|^{q}e^{-q\varphi(w)}
\widehat{\mu}_{\delta}(w)^{q}dA(w)
\end{align}
for $f\in A_{\varphi}^{p}$. In fact, if $q>1$, (\ref{222}), H\"{o}lder's inequality and estimate (\ref{ke21}) imply that
\begin{align*}
\left|T_{\mu}f(z)\right|^{q}e^{-q\varphi(z)}&\lesssim\left(\int_{\mathbb{D}}
|f(w)|\widehat{\mu}_{\delta}(w)|K(w,z)|e^{-2\varphi(w)}e^{-\varphi(z)}dA(w)\right)^{q}\\
&\leq\int_{\mathbb{D}}|f(w)|^{q}e^{-q\varphi(w)}\widehat{\mu}_{\delta}(w)^{q}\left|K(w,z)
e^{-\varphi(w)}e^{-\varphi(z)}\right|dA(w)\\
&\hs\hs\cdot\left(\int_{\mathbb{D}}\left|K(w,z)e^{-\varphi(w)}e^{-\varphi(z)}
\right|dA(w)\right)^{q/q^{\prime}}\\
&\lesssim\int_{\mathbb{D}}|f(w)|^{q}e^{-q\varphi(w)}\widehat{\mu}_{\delta}(w)^{q}\left|K(w,z)
e^{-\varphi(w)}e^{-\varphi(z)}\right|dA(w).
\end{align*}
Thus, by Fubini's Theorem and estimate (\ref{ke21}), we have
\begin{align*}
\int_{\mathbb{D}}|T_{\mu}f(z)|^{q}e^{-q\varphi(z)}dA(z)
&\lesssim\int_{\mathbb{D}}|f(w)|^{q}e^{-q\varphi(w)}
\widehat{\mu}_{\delta}(w)^{q}dA(w)\int_{\mathbb{D}}
\left|K(w,z)e^{-\varphi(w)}e^{-\varphi(z)}\right|dA(z)\\
&\lesssim\int_{\mathbb{D}}|f(w)|^{q}e^{-q\varphi(w)}
\widehat{\mu}_{\delta}(w)^{q}dA(w),
\end{align*}
which gives (\ref{333}) in the case of $q>1$.

If $0<q\leq1$, for the fixed $\delta\in(0, \alpha]$, we choose some $r>0$ small enough such that $B^{2}r\leq\min\{\delta,1\}$ and $Br<2\alpha$, where the constant $B$ is chosen from \cite[Lemma 3.1 (B)]{Hu1}. Let $\{a_{k}\}$ be some $(\rho,r)$-lattice, for any $f\in A_{\varphi}^{p}$, Lemma \ref{latticelemma}, (\ref{fact0}), Lemma \ref{submeanlemma} and \cite[Lemma 3.1]{Hu1} tell us
\begin{align*}
|T_{\mu}f(z)|^{q}&\leq\left(\sum_{k=1}^{\infty}\int_{D^{r}(a_{k})}|f(w)K(w,z)|
e^{-2\varphi(w)}d\mu(w)\right)^{q}\\
&\leq\sum_{k=1}^{\infty}\left(\int_{D^{r}(a_{k})}|f(w)K(w,z)|
e^{-2\varphi(w)}d\mu(w)\right)^{q}\\
&\leq\sum_{k=1}^{\infty}\widehat{\mu}_{r}(a_{k})^{q}\rho(a_{k})^{2q}
\left(\sup_{w\in D^{r}(a_{k})}|f(w)K(w,z)|e^{-2\varphi(w)}\right)^{q}\\
&\lesssim\sum_{k=1}^{\infty}\widehat{\mu}_{r}(a_{k})^{q}\rho(a_{k})^{2q-2}
\int_{D^{Br}(a_{k})}|f(w)|^{q}|K(w,z)|^{q}e^{-2q\varphi(w)}dA(w).
\end{align*}

By \cite[Lemma 3.1 (B)]{Hu1}, we see that $D^{r}(a_{k})\subseteq D^{B^{2}r}(w)$ if $w\in D^{Br}(a_{k})$. This together with \cite[Lemma 3.1]{Hu1} and Lemma \ref{latticelemma} gives
\begin{align*}
|T_{\mu}f(z)|^{q}&\lesssim\sum_{k=1}^{\infty}\int_{D^{2\alpha}(a_{k})}
\widehat{\mu}_{B^{2}r}(w)^{q}\rho(w)^{2q-2}|f(w)|^{q}
|K(w,z)|^{q}e^{-2q\varphi(w)}dA(w)\\
&\lesssim N\int_{\mathbb{D}}\widehat{\mu}_{B^{2}r}(w)^{q}\rho(w)^{2q-2}|f(w)|^{q}
|K(w,z)|^{q}e^{-2q\varphi(w)}dA(w)\\
&\lesssim\int_{\mathbb{D}}\widehat{\mu}_{\delta}(w)^{q}\rho(w)^{2q-2}|f(w)|^{q}
|K(w,z)|^{q}e^{-2q\varphi(w)}dA(w).
\end{align*}
Integrating both sides above with respect to $e^{-q\varphi(z)}dA(z)$, applying Fubini's theorem and using estimate (\ref{ke21}), we obtain assertion (\ref{333}).

Hence, it follows by estimates (\ref{333}) and (\ref{111}) that
\begin{align*}
\|T_{\mu}f\|_{A_{\varphi}^{q}}^{q}\lesssim\int_{\mathbb{D}}|f(w)|^{p}e^{-p\varphi(z)}\widehat{\mu}_{\delta}(w)^{q}
\left(\rho(w)^{-2/p}\|f\|_{A_{\varphi}^{p}}\right)^{q-p}dA(w)
\lesssim\left\|\widehat{\mu}_{\delta}\rho^{2(p-q) /(p q)}\right\|\|f\|_{A_{\varphi}^{p}}^{q}
\end{align*}
for $f\in A_{\varphi}^{p}$.
Therefore, $T_{\mu}$ is bounded from $A_{\varphi}^{p}$ to $A_{\varphi}^{q}$ with
\begin{align}\label{AL}
\|T_{\mu}\|_{A_{\varphi}^{p}\rightarrow A_{\varphi}^{q}}\lesssim\left\|\widehat{\mu}_{\delta} \rho^{2(p-q) /(p q)}\right\|_{L^{\infty}},
\end{align}
which gives $(c)\Rightarrow(a)$. The equivalence of  (\ref{ALLl}) follows from (\ref{Tof1}), (\ref{LAA}) and (\ref{AL}). This completes the proof.
\end{proof}
\begin{theorem}\label{ToeplitzoperatorA}
Let $0<p\leq q<\infty$, $\varphi\in\mathcal{W}_{0}$ with $\frac{1}{\sqrt{\Delta\varphi}}\asymp\rho\in\mathcal{L}_{0}$ and $\mu$ be a finite positive Borel measure on $\mathbb{D}$. Then the following statements are equivalent:\\
(a) $T_{\mu}: A_{\varphi}^{p}\rightarrow A_{\varphi}^{q}$ is compact;\\
(b) $\lim_{|z|\rightarrow1^{-}}\widetilde{\mu}(z)\rho(z)^{2(p-q)/(pq)}=0$;\\
(c) $\lim_{|z|\rightarrow1^{-}}\widehat{\mu}_{\delta}(z)\rho(z)^{2(p-q)/(pq)}=0$ for some (or any) $\delta\in(0, \alpha]$;\\
(d) $\lim_{k\rightarrow\infty}\widehat{\mu}_{r}(a_{k})\rho(a_{k})^{2(p-q)/(pq)}=0$  for some (or any) $(\rho, r)$-lattice $\{a_{k}\}$ with $r\in(0, \alpha]$.
\end{theorem}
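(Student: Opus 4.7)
The plan is to follow the architecture of Theorem \ref{Toeplitzoperator}, adapted to the vanishing case in the spirit of Theorem \ref{carlesonA}. I would first handle the purely measure-theoretic equivalences $(b)\Leftrightarrow(c)\Leftrightarrow(d)$, then connect $(a)$ to $(b)$ via the test-kernel family, and finally handle $(c)\Rightarrow(a)$ by a norm-approximation argument using the truncated measures of Proposition \ref{welldefined}.

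For $(b)\Leftrightarrow(c)\Leftrightarrow(d)$, these are the ``limit as $|z|\to 1^-$'' analogues of the $L^\infty$ equivalences in Theorem \ref{Toeplitzoperator}. The proofs transfer almost verbatim: the pointwise inequality $\widehat{\mu}_{r_0}(z)\lesssim\widetilde{\mu}(z)$ of estimate \eqref{meanfuncestimate}, the discrete/continuous comparison \eqref{meanfuncnorm}, and the local constancy of $\rho$ on a fixed hyperbolic disc all localize to the exterior of compact subsets of $\mathbb{D}$, so the same inequalities give the desired boundary-limit statements. I would sketch this rather than repeat all the calculations.

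For $(a)\Rightarrow(b)$, I would test against the family $\{k_{p,z}\}_{z\in\mathbb{D}}$. By Proposition \ref{uniformly} this family is bounded in $A^p_{\varphi}$ and tends to $0$ uniformly on compact subsets of $\mathbb{D}$ as $|z|\to 1^-$. A standard argument (any norm-limit of a subsequence of $T_\mu k_{p,z_n}$ must equal the pointwise limit, which is $0$) combined with compactness of $T_\mu$ gives $\|T_\mu k_{p,z_n}\|_{A^q_\varphi}\to 0$ whenever $|z_n|\to 1^-$. Combined with the pointwise lower bound
\[
\widetilde{\mu}(z)\rho(z)^{2(p-q)/(pq)} \lesssim \|T_\mu k_{p,z}\|_{A^q_\varphi},
\]
derived exactly as in estimate \eqref{666} in the proof of Theorem \ref{Toeplitzoperator}, this yields $(b)$.

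For $(c)\Rightarrow(a)$, the main step, I would approximate $T_\mu$ in operator norm by the compact operators from Proposition \ref{welldefined}. For $R\in(0,1)$ write $\mu=\mu_R+(\mu-\mu_R)$; then $T_{\mu_R}$ is compact from $A^p_\varphi$ to $A^q_\varphi$ by Proposition \ref{welldefined}. For the residual, the bounded case (Theorem \ref{Toeplitzoperator} applied to $\mu-\mu_R$) gives
\[
\|T_{\mu-\mu_R}\|_{A^p_\varphi\to A^q_\varphi} \lesssim \bigl\|\widehat{(\mu-\mu_R)}_\delta\,\rho^{2(p-q)/(pq)}\bigr\|_{L^\infty}.
\]
Since $\mu-\mu_R$ is supported on $\{|w|>R\}$, $\widehat{(\mu-\mu_R)}_\delta(z)$ is zero unless $D^\delta(z)\cap\{|w|>R\}\neq\emptyset$, which forces $|z|>R-\delta\rho(z)$, and in particular puts $z$ arbitrarily close to $\partial\mathbb{D}$ as $R\to 1^-$. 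On that set, $\widehat{(\mu-\mu_R)}_\delta(z)\le \widehat{\mu}_\delta(z)$, so by $(c)$ the right-hand side is $o(1)$ as $R\to 1^-$. Hence $T_\mu$ is a uniform limit of compact operators, so compact.

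The main obstacle is the implication $(c)\Rightarrow(a)$: it requires a clean quantitative link between the support of $\mu-\mu_R$ and the set where the weighted averaging function is forced to be small, and in particular a verification that the exceptional set $\{z:D^\delta(z)\not\subset \overline{D(0,R)}\}$ is pushed into the boundary as $R\to 1^-$. The other directions are reindexings of already-established arguments from Theorem \ref{Toeplitzoperator} and Proposition \ref{ThreeEquivalents}.
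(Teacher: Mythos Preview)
Your plan is correct and matches the paper's proof in structure: the equivalences $(b)\Leftrightarrow(c)\Leftrightarrow(d)$ are handled by localizing the pointwise inequalities from Theorems \ref{carleson} and \ref{carlesonA}, and $(c)\Rightarrow(a)$ is obtained exactly as you describe, via $\|T_\mu-T_{\mu_R}\|\lesssim\|(\widehat{\mu-\mu_R})_\delta\,\rho^{2(p-q)/(pq)}\|_{L^\infty}\to 0$.

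The only place you deviate from the paper is $(a)\Rightarrow(b)$. You argue that $k_{p,z}\to 0$ uniformly on compacts together with compactness of $T_\mu$ forces $\|T_\mu k_{p,z}\|_{A^q_\varphi}\to 0$, and then invoke \eqref{666}. This works, but the subsequence argument you sketch requires knowing that $T_\mu k_{p,z_n}(w)\to 0$ pointwise for each fixed $w$, which is not entirely automatic (you have to split the defining integral and use the kernel decay from Lemma \ref{kernelestimate1} on the tail). The paper sidesteps this: since $T_\mu$ is compact, $\{T_\mu k_{p,z}:z\in\mathbb{D}\}$ is relatively compact in $A^q_\varphi$, so Proposition \ref{relativelycompact} gives a uniform tail estimate $\sup_z\int_{|w|>S/2}|T_\mu k_{p,z}(w)e^{-\varphi(w)}|^q\,dA(w)<\varepsilon^q$. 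For $|z|$ close enough to $1$ one has $D^{\delta/2}(z)\subset\{|w|>S/2\}$, and then the local estimate \eqref{666} already bounds $\widetilde{\mu}(z)\rho(z)^{2(p-q)/(pq)}$ by that tail integral directly, without ever needing to identify a limit. Your route is more in the spirit of the weak-to-strong convergence principle; the paper's route via Proposition \ref{relativelycompact} is slightly more self-contained here.
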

\begin{proof}
The proof of the implications $(b)\Rightarrow(c)$ and $(c)\Rightarrow(d)$ are similar to the same part of Theorem \ref{carleson}, and the proof of $(d)\Rightarrow(b)$ is similar to that of Theorem \ref{carlesonA}.

$(a)\Rightarrow(b)$. Assume that $T_{\mu}$ is compact from $A_{\varphi}^{p}$ to $A_{\varphi}^{q}$. Since the set $\left\{k_{p,z}:z\in\mathbb{D}\right\}$ is bounded in $A_{\varphi}^{p}$, $\left\{T_{\mu}k_{p,z}:z\in\mathbb{D}\right\}$ is relatively compact in $A_{\varphi}^{q}$. For any $\varepsilon>0$, according to (\ref{relativelycompactformula}),
 there is some $S > 0$ such that
\begin{align*}
\sup_{z\in\mathbb{D}}\int_{|w|> \frac{S}{2}}\left|T_{\mu}k_{p,z}(w)e^{-\varphi(w)}\right|^{q}dA(w)<\varepsilon^{q}.
\end{align*}
Then by (\ref{subset}), we have
$D^{\frac{\delta}{2}}(z)\subset\left\{w\in\mathbb{D}:\ |w|>\frac{S}{2}\right\}$ when $|z|>S$ and $\delta\in(0, \alpha]$  small enough. Hence, by estimate (\ref{666}), we obtain
\begin{align*}
\widetilde{\mu}(z) \rho(z)^{2(p-q) /(p q)}
\lesssim\left(\int_{D^{\frac{\delta}{2}}(z)}\left|T_{\mu}k_{p,z}(z)
e^{-\varphi(z)}\right|^{q}dA(w)\right)^{1/q}\lesssim\varepsilon
\end{align*}
as $|z|$ is sufficiently close to $1^{-}$. Thus we get
$$
\lim _{|z| \rightarrow 1^{-}} \widetilde{\mu}(z) \rho(z)^{2(p-q) /(p q)}=0.
$$

$(c)\Rightarrow(a)$. If $\lim_{|z|\rightarrow1^{-}}\widehat{\mu}_{\delta}(z)\rho(z)^{2(p-q)/(pq)}=0$ for some $\delta\in(0, \alpha]$. It follows by Proposition \ref{welldefined} that $T_{\mu_{R}}$ is compact from $A^{p}_{\varphi}$ to $A^{q}_{\varphi}$. Since $\mu-\mu_{R}\geq0$, in the light of (\ref{ALLl}) and the assumption,  we have
$$
\left\|T_{\mu}-T_{\mu_{R}}\right\|_{A^{p}_{\varphi}\rightarrow A^{q}_{\varphi}} \asymp\left\|\left(\widehat{\mu-\mu_{R}}\right)_{\delta} \rho^{2(p-q) /(p q)}\right\|_{L^{\infty}} \rightarrow 0
$$
as $R\rightarrow1^{-}$. Therefore, $T_{\mu}$ is compact from $A^{p}_{\varphi}$ to $A^{q}_{\varphi}$. This completes the proof.
\end{proof}
\begin{theorem}\label{ToeplitzoperatorB}
Let $0<q<p<\infty$, $\varphi\in\mathcal{W}_{0}$ with $\frac{1}{\sqrt{\Delta\varphi}}\asymp\rho\in\mathcal{L}_{0}$ and $\mu$ be a finite positive Borel measure on $\mathbb{D}$. Then the following statements are equivalent:\\
(a) $T_{\mu}: A_{\varphi}^{p}\rightarrow A_{\varphi}^{q}$ is compact;\\
(b) $T_{\mu}: A_{\varphi}^{p}\rightarrow A_{\varphi}^{q}$ is bounded;\\
(c) $\widetilde{\mu}\in L^{\frac{pq}{p-q}}$;\\
(d) $\widehat{\mu}_{\delta}\in L^{\frac{pq}{p-q}}$ for some (or any) $\delta\in(0, \alpha]$;\\
(e) $\left\{\widehat{\mu}_{r}(a_{k})\rho(a_{k})^{2(p-q)/(pq)}\right\}_{k}\in l^{\frac{pq}{p-q}}$ for some (or any) $(\rho, r)$-lattice $\{a_{k}\}$ with $r\in(0, \alpha]$.\\
Moreover, we have
\begin{align}\label{ALLl1}
\|T_{\mu}\|_{A_{\varphi}^{p}\rightarrow A_{\varphi}^{q}}
\asymp\|\widetilde{\mu}\|_{L^{\frac{pq}{p-q}}}
\asymp\|\widehat{\mu}_{\delta}\|_{L^{\frac{pq}{p-q}}}
\asymp\left\|\left\{\widehat{\mu}_{r}(a_{k})
\rho(a_{k})^{2(p-q)/(pq)}\right\}_{k}\right\|_{l^{\frac{pq}{p-q}}}.
\end{align}
\end{theorem}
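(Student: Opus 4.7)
The plan is to establish the cycle (a)$\Rightarrow$(b)$\Rightarrow$(e)$\Rightarrow$(d)$\Rightarrow$(a), with (c)$\Leftrightarrow$(d)$\Leftrightarrow$(e) coming for free. Writing $s:=pq/(p-q)$, one checks $2(p-q)/(pq)=2/s$, so Proposition \ref{ThreeEquivalents} applied at exponent $s$ immediately gives the equivalence of (c), (d), (e), together with the norm comparisons
\begin{align*}
\|\widetilde{\mu}\|_{L^{s}}\asymp\|\widehat{\mu}_\delta\|_{L^{s}}\asymp\left\|\left\{\widehat{\mu}_r(a_k)\rho(a_k)^{2/s}\right\}_k\right\|_{l^{s}}.
\end{align*}
The implication (a)$\Rightarrow$(b) is trivial, so the substantive work is (d)$\Rightarrow$(a) and (b)$\Rightarrow$(e).

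For (d)$\Rightarrow$(a), I would start from the pointwise estimate (\ref{333}) already proved inside Theorem \ref{Toeplitzoperator}, namely $\|T_\mu f\|_{A_\varphi^q}^q\lesssim\int|fe^{-\varphi}|^q\widehat{\mu}_\delta^q\,dA$ for every $f\in A_\varphi^p$. Since $p/q>1$, H\"older's inequality with conjugate exponents $p/q$ and $p/(p-q)$ gives
\begin{align*}
\|T_\mu f\|_{A_\varphi^q}\lesssim\|f\|_{A_\varphi^p}\,\|\widehat{\mu}_\delta\|_{L^{s}},
\end{align*}
which yields boundedness with the correct norm estimate. To upgrade boundedness to compactness, Proposition \ref{welldefined} applies (finiteness of $\mu$ forces $\widehat{\mu}_\delta\rho^{2}\in L^\infty$) and shows that each $T_{\mu_R}$ is compact from $A_\varphi^p$ to $A_\varphi^q$. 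Applying the same H\"older bound to $\mu-\mu_R$ and invoking dominated convergence (pointwise $\widehat{(\mu-\mu_R)}_\delta\to 0$ as $R\to 1^-$, dominated by $\widehat{\mu}_\delta\in L^s$) gives $\|T_\mu-T_{\mu_R}\|_{A_\varphi^p\to A_\varphi^q}\to 0$, so $T_\mu$ is compact.

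The main obstacle is (b)$\Rightarrow$(e), for which I would use a Luecking-style argument. Fix a $(\rho,r_0)$-lattice $\{w_k\}$ with $r_0\in(0,\alpha]$ chosen as in (\ref{K}), take any $\{c_k\}\in l^p$, and set $F_t(z):=\sum_k c_k r_k(t)k_{p,w_k}(z)$ with $\{r_k\}$ the Rademacher functions. By Proposition \ref{atomicdecomposition}, $\|F_t\|_{A_\varphi^p}\lesssim\|\{c_k\}\|_{l^p}$ uniformly in $t\in(0,1)$. Boundedness of $T_\mu$, integration in $t$, and Khinchine's inequality produce
\begin{align*}
\int_{\mathbb{D}}\Bigl(\sum_k|c_k|^2|T_\mu k_{p,w_k}(z)|^2\Bigr)^{q/2}e^{-q\varphi(z)}\,dA(z)\lesssim\|T_\mu\|^q\|\{c_k\}\|_{l^p}^q.
\end{align*}
The delicate point is that for $q<2$ the inequality $(\sum x_k)^{q/2}\geq\sum x_k^{q/2}$ fails, so a global passage to $\sum_k|c_k|^q|T_\mu k_{p,w_k}(z)|^q$ is unavailable. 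I would sidestep this by working on the pairwise disjoint discs $E_j:=D^{s_0 r_0}(w_j)$ (using the constant $s_0$ from Lemma \ref{latticelemma}(b)), on which the trivial lower bound $(\sum_k|c_k|^2|T_\mu k_{p,w_k}(z)|^2)^{q/2}\geq|c_j|^q|T_\mu k_{p,w_j}(z)|^q$ suffices. Summing over $j$, applying the sub-mean property (Lemma \ref{submeanlemma}) to the analytic function $T_\mu k_{p,w_j}$, and using the pointwise computation
\begin{align*}
T_\mu k_{p,w_j}(w_j)\,e^{-\varphi(w_j)}=\frac{\|K_{w_j}\|_{A_\varphi^2}^{2}}{\|K_{w_j}\|_{A_\varphi^p}}\,\widetilde{\mu}(w_j)\,e^{-\varphi(w_j)}\asymp\widetilde{\mu}(w_j)\rho(w_j)^{-2/p},
\end{align*}
(which follows from the reproducing property together with (\ref{ke21}), and appears inside (\ref{666})) yields
\begin{align*}
\sum_j|c_j|^q\widetilde{\mu}(w_j)^q\rho(w_j)^{2-2q/p}\lesssim\|T_\mu\|^q\|\{c_k\}\|_{l^p}^q.
\end{align*}
Setting $b_j:=|c_j|^q$ and varying over all $\{b_j\}\in l^{p/q}$, the duality $(l^{p/q})^*\cong l^{p/(p-q)}$ (valid since $p/q>1$) gives $\{\widetilde{\mu}(w_j)^q\rho(w_j)^{2-2q/p}\}_j\in l^{p/(p-q)}$, and the exponent identity $(2-2q/p)\cdot p/(p-q)=2$ rewrites this as $\{\widetilde{\mu}(w_j)\rho(w_j)^{2/s}\}_j\in l^s$. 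Combining with $\widehat{\mu}_r\lesssim\widetilde{\mu}$ from (\ref{meanfuncestimate}) and Proposition \ref{ThreeEquivalents} gives (e) and the norm bound $\|\widetilde{\mu}\|_{L^s}\lesssim\|T_\mu\|_{A_\varphi^p\to A_\varphi^q}$, which together with the (d)$\Rightarrow$(a) direction completes the four-way equivalence (\ref{ALLl1}).
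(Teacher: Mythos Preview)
Your proof is correct and follows essentially the same route as the paper. The (d)$\Rightarrow$(a) argument is identical: inequality (\ref{333}), H\"older, then approximation by $T_{\mu_R}$. For (b)$\Rightarrow$(e) the paper merely says ``similar to the corresponding part of Theorem~\ref{carlesonB}'' and quotes (\ref{hatformula}); your write-up is exactly the adaptation that this sentence demands---atomic decomposition, Khinchine, localization to discs around lattice points, then duality $(l^{p/q})^{*}\cong l^{p/(p-q)}$. The only cosmetic difference is that you restrict to the pairwise disjoint discs $D^{s_0r_0}(w_j)$ and invoke the sub-mean inequality together with the point evaluation $T_\mu k_{p,w_j}(w_j)e^{-\varphi(w_j)}\asymp\widetilde{\mu}(w_j)\rho(w_j)^{-2/p}$ (this is (\ref{666})), landing first on $\{\widetilde\mu(w_j)\rho(w_j)^{2/s}\}\in l^s$ before passing to $\widehat\mu_r$ via (\ref{meanfuncestimate}); the paper's Carleson argument uses the finite-multiplicity covering $\{D^{r_0}(w_k)\}$ and the kernel estimate (\ref{K}) to reach $\widehat\mu_{r_0}$ directly. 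Both variants are equivalent.
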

\begin{proof}
The equivalence among the conclusions $(c)$, $(d)$ and $(e)$ follows directly from Proposition \ref{ThreeEquivalents} with
\begin{align}\label{ALLl111}
\|\widetilde{\mu}\|_{L^{\frac{pq}{p-q}}}
\asymp\|\widehat{\mu}_{\delta}\|_{L^{\frac{pq}{p-q}}}
\asymp\left\|\left\{\widehat{\mu}_{r}(a_{k})
\rho(a_{k})^{2(p-q)/(pq)}\right\}_{k}\right\|_{l^{\frac{pq}{p-q}}}
\end{align}
for some  $\delta\in(0, \alpha]$ and some $(\rho, r)$-lattice $\{a_{k}\}$ with $r\in(0, \alpha]$. The implication $(a)\Rightarrow(b)$ is obvious. And the proof of implication $(b)\Rightarrow(e)$ is similar to the corresponding part of Theorem \ref{carlesonB} with
\begin{align}\label{hatformula}
\left\|\left\{\widehat{\mu}_{r}(a_{k})
\rho(a_{k})^{2(p-q)/(pq)}\right\}_{k}\right\|_{l^{\frac{pq}{p-q}}}
\lesssim\|T_{\mu}\|_{A_{\varphi}^{p}\rightarrow A_{\varphi}^{q}}
\end{align}
for any $(\rho,r)$-lattice $\{a_{k}\}$ with $r\in(0, \alpha]$.
It suffices to show the implication $(d)\Rightarrow(a)$ to complete our proof.

$(d)\Rightarrow(a)$. Assume that $\widehat{\mu}_{\delta}\in L^{\frac{pq}{p-q}}$ for some $\delta\in(0, \alpha]$, it follows from Proposition \ref{ThreeEquivalents} that the sequence $\left\{\widehat{\mu}_{\delta}(a_{k})\rho(a_{k})^{2(p-q)/(pq)}\right\}_{k}\in l^{\infty}$ for some $(\rho,\delta)$-lattice $\{a_{k}\}$. According to Theorem \ref{Toeplitzoperator}, we see that $\widehat{\mu}_{\delta}\rho^{2(p-q)/(pq)}\in L^{\infty}$. Hence $T_{\mu}$ is well-defined on $A_{\varphi}^{p}$ by Proposition \ref{welldefined}.

Since $p/q>1$, bearing in mind (\ref{333}), applying H\"{o}lder's inequality and Lemma \ref{kernelestimate2}, we have
\begin{align*}
\|T_{\mu}f\|_{A_{\varphi}^{q}}^{q}&\lesssim\left\{\int_{\mathbb{D}}\left(
|f(w)|^{q}e^{-q\varphi(w)}\right)^{p/q}dA(w)\right\}^{q/p}
\left\{\int_{\mathbb{D}}\widehat{\mu}_{\delta}(w)^{pq/(p-q)}dA(w)\right\}^{(p-q)/p}\lesssim\|\widehat{\mu}_{\delta}\|^{q}_{L^{\frac{pq}{p-q}}}\|f\|_{A_{\varphi}^{p}}^{q}
\end{align*}
for $f\in A_{\varphi}^{p}$. Therefore, $T_{\mu}$ is bounded from $A_{\varphi}^{p}$ to $A_{\varphi}^{q}$ with
\begin{align}\label{444}
\|T_{\mu}\|_{A_{\varphi}^{p}\rightarrow A_{\varphi}^{q}}
\lesssim\|\widehat{\mu}_{\delta}\|_{L^{\frac{pq}{p-q}}}.
\end{align}

To prove the compactness of $T_{\mu}$ from $A_{\varphi}^{p}$ to $A_{\varphi}^{q}$, we take $\mu_{R}$ as in (\ref{muRV}), then $\mu-\mu_{R} \geq 0$, and we have $\left\|\left(\widehat{\mu-\mu_{R}}\right)_{\delta}\right\|_{L^{\frac{pq}{p-q}}} \rightarrow 0$ as $R\rightarrow1^{-}$. By estimate (\ref{444}), we obtain
$$
\left\|T_{\mu}-T_{\mu_{R}}\right\|_{A_{\varphi}^{p}\rightarrow A_{\varphi}^{q}}=\left\|T_{\left(\mu-\mu_{R}\right)}\right\|_{A_{\varphi}^{p}\rightarrow A_{\varphi}^{q}} \lesssim\left\|\left(\widehat{\mu-\mu_{R}}\right)_{\delta}\right\|_{L^{\frac{pq}{p-q}}} \rightarrow 0
$$
whenever $R\rightarrow1^{-}$. Thus $T_{\mu}$ is compact from $A_{\varphi}^{p}$ to $A_{\varphi}^{q}$, since $T_{\mu_{R}}$ is compact by Proposition \ref{welldefined}. The equivalence of norms (\ref{ALLl1}) follows from (\ref{ALLl111}), (\ref{hatformula}) and (\ref{444}). This completes the proof.
\end{proof}
\section{Schatten class Toeplitz  Operators}\label{section5}
In this section, we will investigate when the Toeplitz operator $T_{\mu}$ belongs to the Schatten $p$-class $\mathcal{S}_{p}(A_{\varphi}^{2})$. We refer to \cite[Chapter 1]{Zhu1} for more information about the Schatten $p$-class. We first present the following technical result, which is an easy consequence of Fubini's theorem and the reproducing property.
\begin{lemma}\label{classicalequation}
Let $\varphi\in\mathcal{W}_{0}$ with $\frac{1}{\sqrt{\Delta\varphi}}\asymp\rho\in\mathcal{L}_{0}$ and $\mu$ be a positive Borel measure on $\mathbb{D}$. Then
$$\langle T_{\mu}f,g\rangle_{A_{\varphi}^{2}}=\int_{\mathbb{D}}f(w)\overline{g(w)}e^{-2\varphi(w)}d\mu(w),\quad f,g\in A_{\varphi}^{2}.$$
\end{lemma}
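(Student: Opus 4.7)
The plan is to unwind both sides by definition and then swap the order of integration. Starting from the inner product on $A_{\varphi}^{2}$, I would write
\[
\langle T_{\mu}f,g\rangle_{A_{\varphi}^{2}}
=\int_{\mathbb{D}}T_{\mu}f(z)\,\overline{g(z)}\,e^{-2\varphi(z)}\,dA(z)
=\int_{\mathbb{D}}\!\!\int_{\mathbb{D}} f(w)K(z,w)e^{-2\varphi(w)}\,d\mu(w)\,\overline{g(z)}\,e^{-2\varphi(z)}\,dA(z),
\]
using the defining formula for $T_{\mu}f$ from the introduction.

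Next I would apply Fubini's theorem to exchange the order of integration. To justify this rigorously, I need the integrand to be absolutely integrable with respect to the product measure $e^{-2\varphi(z)}dA(z)\otimes d\mu(w)$. This is where the pointwise kernel estimate \eqref{ke11}, together with the off-diagonal decay $e^{-\sigma d_{\rho}(z,w)}$, the submean estimate of Lemma~\ref{submeanlemma}, and Corollary~3.1 of \cite{Hu1}, are used; combined with the finiteness of $\mu$ (which is the standing convention stated at the end of the introduction) and the fact that $f,g\in A_{\varphi}^{2}$, one obtains that the full double integral is absolutely convergent. After swapping we arrive at
\[
\int_{\mathbb{D}} f(w)e^{-2\varphi(w)}\left(\int_{\mathbb{D}} K(z,w)\,\overline{g(z)}\,e^{-2\varphi(z)}\,dA(z)\right) d\mu(w).
\]

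To finish, I would use the conjugate symmetry $K(z,w)=\overline{K(w,z)}$ of the reproducing kernel to rewrite the inner integral as
\[
\int_{\mathbb{D}} K(z,w)\,\overline{g(z)}\,e^{-2\varphi(z)}\,dA(z)
=\overline{\int_{\mathbb{D}} K(w,z)\,g(z)\,e^{-2\varphi(z)}\,dA(z)}
=\overline{g(w)},
\]
where the last equality is the reproducing property for $A_{\varphi}^{2}$ stated in the introduction. Substituting this back yields the claimed identity.

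The only genuine obstacle is the Fubini step; everything else is formal manipulation. I expect to handle it by a standard two-line estimate: bound $|K(z,w)|$ by \eqref{ke11}, use Lemma~\ref{kernelestimate2} and Cauchy--Schwarz to control $\int |f(w)K(z,w)|e^{-2\varphi(w)}\,d\mu(w)$ by $\|K_z\|_{A_\varphi^2}\|f\|_{A_\varphi^2}$-type expressions, and then integrate in $z$ against $|g(z)|e^{-2\varphi(z)}$ using Cauchy--Schwarz once more; the finiteness of $\mu$ makes all such bounds finite, so Fubini applies.
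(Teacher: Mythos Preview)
Your proposal is correct and follows exactly the approach the paper indicates: the paper states that the lemma ``is an easy consequence of Fubini's theorem and the reproducing property'' without giving further details, and your argument carries out precisely this computation, even supplying a justification for the Fubini step that the paper omits.
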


Next, we will consider the measure $d\lambda_{\rho}$ given by
$$d\lambda_{\rho}(z)=\frac{dA(z)}{\rho(z)^{2}},\ \ z\in\mathbb{D}.$$
\begin{proposition}\label{ThreeEquivalents1}
Let $0<p<\infty$ and $\mu$ be a positive Borel measure on $\mathbb{D}$. Then the following statements are equivalent:\\
(a) $\widetilde{\mu}\in L^{p}(\mathbb{D}, d\lambda_{\rho})$;\\
(b) $\widehat{\mu}_{\delta}\in L^{p}(\mathbb{D}, d\lambda_{\rho})$ for some (or any) $\delta\in(0, \alpha]$;\\
(c) The sequence $\left\{\widehat{\mu}_{r}(w_{k})\right\}_{k}\in l^{p}$ for some (or any) $(\rho, r)$-lattice $\{w_{k}\}$ with $r\in(0, \alpha]$.
\end{proposition}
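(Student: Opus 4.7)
The overall plan is to mimic the structure of Proposition \ref{ThreeEquivalents}, with the Lebesgue measure $dA$ replaced throughout by $d\lambda_\rho = \rho^{-2}\,dA$ and the weight $\rho(a_k)^{2/p}$ in the discrete norm set to $1$. A key observation is that the lattice comparison (\ref{meanfuncnorm}) from Proposition \ref{ThreeEquivalents} was established for every $s \in \mathbb{R}$; specializing to $s=-2/p$ yields
\[
\left\|\{\widehat{\mu}_r(a_k)\}_k\right\|_{l^p} \asymp \left\|\{\widehat{\mu}_\delta(b_k)\}_k\right\|_{l^p},
\]
so the ``some-or-any'' flexibility inside statement (c) is free of charge.

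For $(b)\Leftrightarrow(c)$, I would fix a $(\rho,\delta)$-lattice $\{b_k\}$ and decompose, via Lemma \ref{latticelemma},
\[
\int_{\mathbb{D}} \widehat{\mu}_\delta(z)^p\,d\lambda_\rho(z) \asymp \sum_k \int_{D^\delta(b_k)} \widehat{\mu}_\delta(z)^p\,\frac{dA(z)}{\rho(z)^2}.
\]
On each $D^\delta(b_k)$ one has $\rho(z)\asymp\rho(b_k)$ and $|D^\delta(b_k)|_{dA}\asymp\rho(b_k)^2$, so the two powers of $\rho(b_k)$ cancel. Combined with the inclusions $D^{\delta/B}(b_k)\subseteq D^\delta(z)\subseteq D^{B\delta}(b_k)$ for $z\in D^\delta(b_k)$ from \cite[Lemma 3.1]{Hu1}, this sandwiches the integral between $\sum_k \widehat{\mu}_{\delta/B}(b_k)^p$ and $\sum_k \widehat{\mu}_{B\delta}(b_k)^p$, both of which are identified with the lattice norm in (c) via (\ref{meanfuncnorm}) with $s=-2/p$.

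The implication $(a)\Rightarrow(b)$ is immediate from the pointwise estimate (\ref{meanfuncestimate}), which gives $\widehat{\mu}_{r_0}(z)\lesssim\widetilde{\mu}(z)$; taking $L^p(d\lambda_\rho)$-norms and appealing to the already-proved $(b)\Leftrightarrow(c)$ transfers the estimate to any $\delta\in(0,\alpha]$.

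The main content is the converse $(b)\Rightarrow(a)$, which I would split by the value of $p$. For $1\le p<\infty$, I would first establish the analogue of Proposition \ref{Berezinfunctioncompress} for the measure $d\lambda_\rho$, namely that the Berezin transform is bounded on $L^p(\mathbb{D},d\lambda_\rho)$. For $p=\infty$ this is immediate from $\|k_z\|_{A_\varphi^2}=1$; for $p=1$, Fubini together with
\[
\int_{\mathbb{D}} |k_z(w)|^2\,d\lambda_\rho(z) \asymp \int_{\mathbb{D}} |K(w,z)|^2 e^{-2\varphi(z)}\,dA(z) = K(w,w) \asymp \frac{e^{2\varphi(w)}}{\rho(w)^2}
\]
(using estimate (\ref{ke21}) and the reproducing property) gives $\|\widetilde f\|_{L^1(d\lambda_\rho)}\lesssim\|f\|_{L^1(d\lambda_\rho)}$, and Riesz--Thorin interpolation covers the intermediate $p$. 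Combining this boundedness with the pointwise inequality $\widetilde{\mu}(z)\lesssim\widetilde{\widehat{\mu}_\delta}(z)$ (Proposition \ref{corollarysubmean} applied to $f=k_z$ with $p=2$) yields $\|\widetilde{\mu}\|_{L^p(d\lambda_\rho)}\lesssim\|\widehat{\mu}_\delta\|_{L^p(d\lambda_\rho)}$ for $1\le p<\infty$. For $0<p<1$, I would reuse the pointwise bound
\[
|\widetilde{\mu}(z)|^p \lesssim \sum_j \widehat{\mu}_{Br}(a_j)^p \sup_{w\in D^r(a_j)} e^{-2\sigma d_\rho(z,w)}
\]
already derived inside the proof of Proposition \ref{ThreeEquivalents}, integrate against $d\lambda_\rho$, and reduce matters to the uniform geometric estimate $\int_{\mathbb{D}} e^{-2\sigma d_\rho(z,w)}\,d\lambda_\rho(z)\lesssim 1$. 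This last bound should follow by covering $\mathbb{D}$ by a $(\rho,r)$-lattice whose cells have area $\asymp\rho(a_k)^2$ and then establishing the uniform summability $\sum_k e^{-2\sigma d_\rho(a_k,w)}\lesssim 1$; verifying this geometric summability for the distance $d_\rho$ associated to $\rho\in\mathcal{L}_0$ is the main technical obstacle in the proof, since every other step closely parallels or reuses ingredients from Proposition \ref{ThreeEquivalents}.
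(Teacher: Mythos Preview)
Your proposal is correct and follows exactly the approach the paper intends: the paper's own proof reads in its entirety ``Similar to that of Proposition \ref{ThreeEquivalents},'' and you have carried out precisely that adaptation, replacing $dA$ by $d\lambda_\rho$ and tracking the resulting cancellation of $\rho$-powers. The one point you flag as ``the main technical obstacle,'' namely the uniform bound $\int_{\mathbb{D}} e^{-2\sigma d_\rho(z,w)}\,d\lambda_\rho(z)\lesssim 1$, is not an obstacle at all: \cite[Corollary 3.1]{Hu1} is stated for integrals $\int_{\mathbb{D}}\rho(w)^{s}e^{-\sigma d_\rho(z,w)}\,dA(w)\lesssim\rho(z)^{s+2}$ with general exponent $s$ (this is how it is invoked, for instance, in the proof of Proposition \ref{welldefined}), and taking $s=-2$ gives the required estimate directly, so no separate lattice-summability argument is needed.
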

\begin{proof}
Similar to that of Proposition \ref{ThreeEquivalents}.
\end{proof}
\begin{lemma}\label{Schatten1}
Suppose $\varphi\in\mathcal{W}_{0}$ with $\frac{1}{\sqrt{\Delta\varphi}}\asymp\rho\in\mathcal{L}_{0}$ and $T$ is a positive operator on $A_{\varphi}^{2}$. Let $\widetilde{T}$ denote Berezin transform of the operator $T$ defined by
$$\widetilde{T}(z)=\left\langle Tk_{z},k_{z}\right\rangle_{A_{\varphi}^{2}},\ \ z\in \mathbb{D}.$$
(a) Let $0<p\leq1$. If $\widetilde{T}\in L^{p}(\mathbb{D}, d\lambda_{\rho})$, then $T$ is in $\mathcal{S}_{p}(A_{\varphi}^{2})$.\\
(b) Let $p\geq1$. If $T$ is in $\mathcal{S}_{p}(A_{\varphi}^{2})$, then $\widetilde{T}\in L^{p}(\mathbb{D}, d\lambda_{\rho})$.
\end{lemma}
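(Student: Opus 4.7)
My plan is based on the weak operator identity
$$\langle f,g\rangle_{A_\varphi^2}=\int_{\mathbb{D}}\langle f,k_z\rangle\langle k_z,g\rangle\,d\nu(z),\qquad d\nu(z):=\|K_z\|_{A_\varphi^2}^{2}\,e^{-2\varphi(z)}\,dA(z),$$
which is an immediate consequence of $\langle f,k_z\rangle=f(z)/\|K_z\|_{A_\varphi^2}$ and the definition of $\|f\|^{2}_{A_\varphi^2}$. By Lemma \ref{kernelestimate2} with $p=2$, one has $d\nu\asymp d\lambda_\rho$, so the two measures are interchangeable up to absolute constants throughout.

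For part (b), I will use the spectral theorem. Since $T\in\mathcal{S}_p(A_\varphi^2)$ is positive (hence compact), one writes $T=\sum_n\lambda_n\,e_n\otimes e_n$ with $\lambda_n\geq 0$ and an orthonormal basis $\{e_n\}\subset A_\varphi^2$; thus $\|T\|_{\mathcal{S}_p}^p=\sum_n\lambda_n^p$ and
$$\widetilde T(z)=\sum_n\lambda_n|\langle k_z,e_n\rangle|^{2}.$$
Because $\sum_n|\langle k_z,e_n\rangle|^2=\|k_z\|^2=1$, the number $\widetilde T(z)$ is a convex combination of $\{\lambda_n\}$. Jensen's inequality with $t\mapsto t^p$ ($p\geq1$) gives $\widetilde T(z)^p\leq\sum_n\lambda_n^p|\langle k_z,e_n\rangle|^2$; integrating against $d\lambda_\rho$, applying Fubini, and using the computation
$$\int_\mathbb{D}|\langle k_z,e_n\rangle|^{2}\,d\lambda_\rho(z)=\int_\mathbb{D}\frac{|e_n(z)|^{2}}{\|K_z\|_{A_\varphi^2}^{2}}\,\frac{dA(z)}{\rho(z)^{2}}\asymp\int_\mathbb{D}|e_n(z)|^{2}e^{-2\varphi(z)}\,dA(z)=1,$$
yields $\|\widetilde T\|_{L^p(d\lambda_\rho)}^p\lesssim\sum_n\lambda_n^p=\|T\|_{\mathcal{S}_p}^p$, which is (b).

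For part (a), apply the weak identity with $f,g$ replaced by $T^{1/2}f,T^{1/2}g$ (possible because $T\geq0$) to obtain the operator representation
$$T=\int_{\mathbb{D}}(T^{1/2}k_z)\otimes(T^{1/2}k_z)\,d\nu(z)$$
in the weak sense. The integrand is a positive rank-one operator whose Schatten $p$-quasi-norm equals $\|T^{1/2}k_z\|^2=\widetilde T(z)$. For $0<p\leq 1$ the $\mathcal{S}_p$-quasi-norm satisfies the $p$-triangle inequality $\|\sum B_k\|_{\mathcal{S}_p}^p\leq\sum\|B_k\|_{\mathcal{S}_p}^p$; approximating the integral by Riemann sums (on a lattice partition, see below) and invoking this inequality together with positivity gives
$$\|T\|_{\mathcal{S}_p}^p\lesssim\int_\mathbb{D}\widetilde T(z)^p\,d\nu(z)\asymp\int_\mathbb{D}\widetilde T(z)^p\,d\lambda_\rho(z).$$

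The main obstacle is the rigorous transfer of the $p$-triangle inequality through the operator-valued weak integral in (a), since $\mathcal{S}_p$ with $p<1$ is only a quasi-Banach space and standard Bochner theory is unavailable. I plan to circumvent this by discretizing: fix a $(\rho,r)$-lattice $\{w_k\}$, partition $\mathbb{D}=\bigsqcup_k E_k$ with $E_k\subset D^r(w_k)$ (via Lemma \ref{latticelemma}), and write $T=\sum_k A_k$ where $A_k=\int_{E_k}(T^{1/2}k_z)\otimes(T^{1/2}k_z)\,d\nu(z)$. Because $k_z$ varies slowly on $E_k$ by Lemmas \ref{kernelestimate1} and \ref{kernelestimate2} together with the Lipschitz property of $\rho$, each $A_k$ is approximately a positive rank-one operator with $\|A_k\|_{\mathcal{S}_p}\lesssim\widetilde T(w_k)$; the genuine (discrete) $p$-triangle inequality then yields $\|T\|_{\mathcal{S}_p}^p\lesssim\sum_k\widetilde T(w_k)^p$, after which Proposition \ref{ThreeEquivalents1} (whose proof uses only slow variation of the transform, and therefore transfers from measure Berezin transforms to $\widetilde T$) gives $\sum_k\widetilde T(w_k)^p\asymp\|\widetilde T\|_{L^p(d\lambda_\rho)}^p$ and closes the argument.
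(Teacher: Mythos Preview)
Your argument for part (b) is correct and is essentially a spectral reformulation of the paper's proof: your Jensen inequality $(\sum_n \lambda_n c_n)^p\le \sum_n \lambda_n^p c_n$ with $c_n=|\langle k_z,e_n\rangle|^2$ is exactly the inequality $[\langle Tk_z,k_z\rangle]^p\le \langle T^p k_z,k_z\rangle$ that the paper invokes from \cite[Proposition~1.31]{Zhu1}, and the integral $\int_{\mathbb D}|\langle k_z,e_n\rangle|^2\,d\lambda_\rho\asymp 1$ is the same Parseval/trace computation the paper performs.

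Part (a), however, has a genuine gap at the step ``$\|A_k\|_{\mathcal S_p}\lesssim \widetilde T(w_k)$''. The operator $A_k=\int_{E_k}(T^{1/2}k_z)\otimes(T^{1/2}k_z)\,d\nu(z)$ is positive with $\operatorname{tr}(A_k)=\int_{E_k}\widetilde T(z)\,d\nu(z)\asymp\widetilde T(w_k)$, but for $0<p<1$ the Schatten scale satisfies $\|A_k\|_{\mathcal S_p}\ge \|A_k\|_{\mathcal S_1}=\operatorname{tr}(A_k)$, with equality only when $A_k$ has rank one. Your claim that $A_k$ is ``approximately rank one'' would require $\|k_z-k_w\|\to 0$ as $z,w$ approach each other in the $\rho$-metric; the kernel estimates available here (Lemma~\ref{kernelestimate1}) only give $|\langle k_z,k_w\rangle|\asymp 1$ near the diagonal, not $\langle k_z,k_w\rangle\to 1$, so no smallness of $\|k_z-k_w\|$ follows. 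Moreover the range of $A_k$ contains $T^{1/2}(\operatorname{span}\{k_z:z\in E_k\})$, which is typically infinite-dimensional, so $A_k$ is not even of bounded rank. Your appeal to Proposition~\ref{ThreeEquivalents1} at the end is also unjustified: that proposition concerns the \emph{measure} transform $\widetilde\mu$ via the averaging function $\widehat\mu_\delta$, an intermediary that has no analogue for a general positive operator $T$.

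The paper avoids all of this by never decomposing $T$. It computes directly, for any orthonormal basis $\{e_k\}$,
\[
\|T\|_{\mathcal S_p}^p=\operatorname{tr}(T^p)=\sum_k\langle T^p e_k,e_k\rangle=\int_{\mathbb D}\|T^{p/2}K_z\|^2 e^{-2\varphi(z)}\,dA(z)\asymp\int_{\mathbb D}\langle T^p k_z,k_z\rangle\,d\lambda_\rho(z),
\]
and then uses the \emph{scalar} inequality $\langle T^p k_z,k_z\rangle\le[\langle Tk_z,k_z\rangle]^p=\widetilde T(z)^p$ for $0<p\le 1$ (operator concavity of $t\mapsto t^p$). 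This converts the quasi-Banach difficulty into a pointwise spectral inequality and yields (a) immediately.
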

\begin{proof}
If $p>0$, it is known that the positive operator $T$ is in $\mathcal{S}_{p}(A_{\varphi}^{2})$ if and only if $T^{p}$ is in $\mathcal{S}_{1}(A_{\varphi}^{2})$, and $T^{p}$ is in $\mathcal{S}_{1}(A_{\varphi}^{2})$ if and only if $\sum_{k}\left\langle T^{p} e_{k}, e_{k}\right\rangle_{A_{\varphi}^{2}}<\infty$, where $\{e_{k}\}$ is an orthonormal basis of $A_{\varphi}^{2}$. Let $S=\sqrt{T^{p}}$, then by the reproducing property, Fubini's theorem, Parseval's identity and Lemma \ref{kernelestimate2}, we obtain
\begin{align*}
\sum_{k}\left\langle T^{p} e_{k}, e_{k}\right\rangle_{A_{\varphi}^{2}}&=\sum_{k}\left\|S e_{k}\right\|_{A_{\varphi}^{2}}^{2}=
\sum_{k} \int_{\mathbb{D}}\left|S e_{k}(z)\right|^{2}e^{-2\varphi(z)}d A(z)=\sum_{k}\int_{\mathbb{D}}\left|\left\langle S e_{k}, K_{z}\right\rangle_{A_{\varphi}^{2}}\right|^{2}e^{-2\varphi(z)}d A(z)\\
&=\int_{\mathbb{D}}\sum_{k}\left|\left\langle e_{k}, S K_{z}\right\rangle_{A_{\varphi}^{2}}\right|^{2} e^{-2\varphi(z)}d A(z)
=\int_{\mathbb{D}}\left\|S K_{z}\right\|_{A_{\varphi}^{2}}^{2} e^{-2\varphi(z)} d A(z)\\
&=\int_{\mathbb{D}}\left\langle T^{p} K_{z}, K_{z}\right\rangle_{A_{\varphi}^{2}} e^{-2\varphi(z)} d A(z)
=\int_{\mathbb{D}}\left\langle T^{p} k_{z}, k_{z}\right\rangle_{A_{\varphi}^{2}}\left\|K_{z}\right\|_{A_{\varphi}^{2}}^{2} e^{-2\varphi(z)} d A(z)\\
&\asymp\int_{\mathbb{D}}\left\langle T^{p} k_{z}, k_{z}\right\rangle_{A_{\varphi}^{2}} d \lambda_{\rho}(z).
\end{align*}
Therefore, the conclusions $(a)$ and $(b)$ are obtained by using the following two inequalities (see \cite[Proposition 1.31]{Zhu1})
$$
\left\langle T^{p} k_{z}, k_{z}\right\rangle_{A_{\varphi}^{2}} \leq\left[\left\langle T k_{z}, k_{z}\right\rangle_{A_{\varphi}^{2}}\right]^{p}=[\widetilde{T}(z)]^{p}, \quad 0<p \leq 1,
$$
and
$$
[\widetilde{T}(z)]^{p}=\left[\left\langle T k_{z}, k_{z}\right\rangle_{A_{\varphi}^{2}}\right]^{p} \leq\left\langle T^{p} k_{z}, k_{z}\right\rangle_{A_{\varphi}^{2}}, \quad p \geq 1.
$$
\end{proof}
\begin{corollary}\label{Schattencorollary1}
Let $\varphi\in\mathcal{W}_{0}$ with $\frac{1}{\sqrt{\Delta\varphi}}\asymp\rho\in\mathcal{L}_{0}$. If $0<p\leq1$ and $\widetilde{\mu}\in L^{p}(\mathbb{D}, d\lambda_{\rho})$, then $T_{\mu}$ is in $\mathcal{S}_{p}(A_{\varphi}^{2})$. Conversely, if $p\geq1$ and $T_{\mu}$ is in $\mathcal{S}_{p}(A_{\varphi}^{2})$, then $\widetilde{\mu}\in L^{p}(\mathbb{D}, d\lambda_{\rho})$.
\end{corollary}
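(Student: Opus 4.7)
The plan is to reduce everything to Lemma \ref{Schatten1} by identifying the Berezin transform of the operator $T_{\mu}$ with the Berezin transform of the measure $\mu$, and by checking that $T_{\mu}$ is a positive operator. First I would set $f=g$ in Lemma \ref{classicalequation} to obtain
$$\langle T_{\mu}f, f\rangle_{A_{\varphi}^{2}} = \int_{\mathbb{D}} |f(w)|^{2} e^{-2\varphi(w)}d\mu(w)\geq 0,$$
which shows $T_{\mu}$ is positive whenever it is bounded on $A_{\varphi}^{2}$. Next, plugging $f=g=k_{z}$ into the same lemma gives
$$\widetilde{T_{\mu}}(z) = \langle T_{\mu}k_{z}, k_{z}\rangle_{A_{\varphi}^{2}} = \int_{\mathbb{D}} |k_{z}(w)|^{2} e^{-2\varphi(w)} d\mu(w) = \widetilde{\mu}(z).$$

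For the converse half ($p\geq 1$ and $T_{\mu}\in\mathcal{S}_{p}(A_{\varphi}^{2})$), the Schatten class assumption already forces $T_{\mu}$ to be bounded and positive, so Lemma \ref{Schatten1}(b) applied directly to $T=T_{\mu}$ yields $\widetilde{\mu}=\widetilde{T_{\mu}}\in L^{p}(\mathbb{D}, d\lambda_{\rho})$ at once.

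For the forward half ($0<p\leq 1$ and $\widetilde{\mu}\in L^{p}(\mathbb{D}, d\lambda_{\rho})$), I need $T_{\mu}$ to be a bona fide bounded positive operator before quoting Lemma \ref{Schatten1}(a). The cleanest route is the truncation trick from Proposition \ref{welldefined}: with $\mu_{R}$ as in (\ref{muRV}) for $R\in(0,1)$, the operator $T_{\mu_{R}}$ is compact and positive, so Lemma \ref{Schatten1}(a) gives
$$\|T_{\mu_{R}}\|_{\mathcal{S}_{p}}^{p}\lesssim \int_{\mathbb{D}} \widetilde{\mu_{R}}(z)^{p} d\lambda_{\rho}(z) \leq \int_{\mathbb{D}} \widetilde{\mu}(z)^{p} d\lambda_{\rho}(z)<\infty,$$
using the pointwise bound $\widetilde{\mu_{R}}\leq \widetilde{\mu}$ coming from $\mu_{R}\leq \mu$. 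Letting $R\to 1^{-}$, the operators $T_{\mu_{R}}$ converge to $T_{\mu}$ in the strong operator topology (which one can verify on the dense set of reproducing kernels via Lemma \ref{classicalequation} together with dominated convergence), and by the lower semicontinuity of the Schatten $p$-norm under such limits, $T_{\mu}\in\mathcal{S}_{p}(A_{\varphi}^{2})$ with the same bound.

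The only non-routine point is the forward direction for small $p$, where one cannot invoke Lemma \ref{Schatten1} directly because the hypothesis $\widetilde{\mu}\in L^{p}(d\lambda_{\rho})$ is weaker than any a priori boundedness of $T_{\mu}$ on $A_{\varphi}^{2}$. The truncation and limit argument circumvents this, so the corollary follows essentially formally from Lemma \ref{Schatten1} once the identity $\widetilde{T_{\mu}}=\widetilde{\mu}$ is in hand.
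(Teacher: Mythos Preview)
Your handling of the case $p\geq 1$ coincides with the paper's. For $0<p\leq 1$, however, the paper avoids truncation entirely: from $\widetilde{\mu}\in L^{p}(\mathbb{D},d\lambda_{\rho})$ it invokes Proposition~\ref{ThreeEquivalents1} to get $\{\widehat{\mu}_{r}(w_{k})\}_{k}\in l^{p}\subset l^{\infty}$, and then Theorem~\ref{Toeplitzoperator} (with $p=q=2$) immediately yields that $T_{\mu}$ is bounded on $A_{\varphi}^{2}$. With boundedness (and hence positivity, via Lemma~\ref{classicalequation}) secured, Lemma~\ref{Schatten1}(a) applies directly to $T=T_{\mu}$ and the corollary follows. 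In particular, your assertion that the hypothesis ``is weaker than any a priori boundedness of $T_{\mu}$'' is mistaken: the chain Proposition~\ref{ThreeEquivalents1} $\Rightarrow$ Theorem~\ref{Toeplitzoperator} already delivers boundedness for free, and this is exactly what the paper exploits.

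Your truncation route can be pushed through, but it is not as clean as you suggest. Two points need genuine work: first, to even speak of strong convergence $T_{\mu_{R}}\to T_{\mu}$ you must know that $T_{\mu}$ extends to a bounded operator on all of $A_{\varphi}^{2}$, which is precisely the issue you are trying to circumvent (you can recover this from the uniform $\mathcal{S}_{p}$-bound via $\|\cdot\|_{\mathrm{op}}\leq\|\cdot\|_{\mathcal{S}_{p}}$ and Vigier's theorem for increasing nets of positive operators, but this should be said). Second, the ``lower semicontinuity of the Schatten $p$-norm'' you invoke is not standard for $0<p<1$, where $\mathcal{S}_{p}$ is only quasi-Banach; one needs a separate argument, for instance showing $\lambda_{k}(T_{\mu_{R}})\to\lambda_{k}(T_{\mu})$ via the min--max principle and then applying Fatou's lemma to the eigenvalue sequences. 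None of this is wrong, but the paper's two-line reduction to existing results is considerably more economical.
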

\begin{proof}
Assume that $\widetilde{\mu}\in L^p(\mathbb{D},d\lambda_{\rho})$, then  $\{\widehat{\mu}_{r}(w_{k})\}_{k}\in l^{\infty}$ for some $(\rho,r)$-lattice $\{w_{k}\}$ by Proposition \ref{ThreeEquivalents1}. Thus $T_{\mu}$ is bounded on $A_{\varphi}^{2}$ in views of Theorem \ref{Toeplitzoperator}.
 Since $\widetilde{T}_{\mu}=\widetilde{\mu}$ by a simple calculation, the result follows immediately from Lemma \ref{Schatten1}.
\end{proof}
For any $E\subseteq\mathbb{D}$ Lebesgue measurable, we write $\left|E\right|$ for the Lebesgue area measure of $E$ in the following.
\begin{lemma}\label{lll1}
Let $\rho\in\mathcal{L}_{0}$ and $\{w_{j}\}$ be a $(\rho,\delta)$-lattice on $\mathbb{D}$. For any $\xi\in\mathbb{D}$ and $k\in\mathbb{N}^{+}$, the set
\begin{align*}
D_{k}(\xi)=\left\{z\in\mathbb{D}:|\xi-z|<2^{k}\delta\min
\left(\rho(\xi),\rho(z)\right)\right\}
\end{align*}
contains at most $K$ points of the lattice $\{w_{j}\}$, where $K$ depends on $k$ but not on $\xi$.
\end{lemma}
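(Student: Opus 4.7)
The plan is to exploit the separation built into the $(\rho,\delta)$-lattice together with the Lipschitz-type stability of $\rho$ (from $\rho\in\mathcal{L}_0$ and \cite[Lemma 3.1]{Hu1}) to trap every $w_j\in D_k(\xi)$ inside a Euclidean disc around $\xi$ whose radius is comparable to $\rho(\xi)$ with a constant depending only on $k$, and then to finish by an area comparison.

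First I would observe that if $w_j\in D_k(\xi)$, then $|w_j-\xi|<2^k\delta\,\rho(\xi)$. Since $\rho\in\mathcal{L}_0\subset\mathcal{L}$, the distance function satisfies the control in \cite[Lemma 3.1]{Hu1}, which in particular says that for each fixed $M>0$ there is a constant $C_M>1$ such that $|u-v|\le M\rho(u)$ implies $C_M^{-1}\rho(u)\le\rho(v)\le C_M\rho(u)$. Applied with $M=2^k\delta$ this gives $\rho(w_j)\asymp\rho(\xi)$, where the implicit constants depend on $k$ and $\delta$ but not on $\xi$.

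Next, I would combine this comparability with the separation property (b) of Lemma \ref{latticelemma}: the discs $\{D^{s\delta}(w_j)\}$ are pairwise disjoint. For any $w_j\in D_k(\xi)$ and any $z\in D^{s\delta}(w_j)$, the triangle inequality together with the previous step yields
\begin{equation*}
|z-\xi|\le|z-w_j|+|w_j-\xi|< s\delta\,\rho(w_j)+2^k\delta\,\rho(\xi)\le R_k\,\rho(\xi),
\end{equation*}
for some constant $R_k$ depending only on $k$ (and on $\delta,s$). Hence
\begin{equation*}
\bigsqcup_{w_j\in D_k(\xi)} D^{s\delta}(w_j)\subset D\bigl(\xi,R_k\,\rho(\xi)\bigr).
\end{equation*}
Each disc on the left has area $\pi(s\delta\,\rho(w_j))^2\gtrsim\rho(\xi)^2$ with a uniform lower constant (again depending only on $k$), while the disc on the right has area $\pi R_k^2\,\rho(\xi)^2$. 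An area comparison therefore bounds the number of $w_j$ in $D_k(\xi)$ by a constant $K=K(k,\delta,s)$ independent of $\xi$.

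The only delicate point is making sure the constant relating $\rho(w_j)$ to $\rho(\xi)$ really depends only on $k$ and not on $\xi$; once that is in hand the rest is a clean packing argument, so I do not anticipate a genuine obstacle.
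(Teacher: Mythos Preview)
Your proposal is correct and follows essentially the same route as the paper: you use the Lipschitz stability of $\rho$ to get $\rho(w_j)\asymp\rho(\xi)$ for $w_j\in D_k(\xi)$, invoke the pairwise disjointness of the discs $D^{s\delta}(w_j)$ from Lemma~\ref{latticelemma}, trap all these discs in a single disc of radius $\asymp\rho(\xi)$ about $\xi$, and finish by an area count. The paper obtains the comparability $\rho(\xi)\le C_k\rho(w_j)$ directly from the Lipschitz bound in the definition of $\mathcal{L}_0$ rather than quoting \cite[Lemma~3.1]{Hu1}, but this is a cosmetic difference; the packing argument is identical.
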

\begin{proof}
Let $K$ denote the number of points of the lattice contained in the set $D_{k}(\xi)$. For $w_{j}\in D_{k}(\xi)$, by the definition of $\mathcal{L}_{0}$, we get
\begin{align}\label{2222}
\rho(\xi)\leq\rho(w_{j})+C|\xi-w_{j}|\leq(1+C2^{k}\delta)\rho(w_{j})=C_{k}\rho(w_{j}).
\end{align}
Since there exists some $s>0$ such that $D^{\delta s}(w_{i})\bigcap D^{\delta s}(w_{j})=\emptyset$ for $i\neq j$ by Lemma \ref{latticelemma}, we have
\begin{align}\label{3333}
K\rho(\xi)^{2}\leq C_{k}^{2}\sum_{w_{j}\in D_{k}(\xi)}\rho(w_{j})^{2}
\lesssim C_{k}^{2}\left|\bigcup_{w_{j}\in D_{k}(\xi)}D^{\delta s}(w_{j})\right|
\end{align}
As done in (\ref{2222}), we get $\rho(w_{j})\leq C_{k}\rho(\xi)$ for $w_{j}\in D_{k}(\xi)$. Hence there exists some constant $c$ such that $D^{\delta s}(w_{j})\subset D^{c2^{k}\delta}(\xi)$. Then we have
\begin{align*}
\bigcup_{w_{j}\in D_{k}(\xi)}D^{\delta s}(w_{j})\subset D^{c2^{k}\delta}(\xi).
\end{align*}
Therefore, this together with (\ref{3333}) gives
\begin{align*}
K\rho(\xi)^{2}\leq C_{k}^{2}\left|D^{c2^{k}\delta}(\xi)\right|\lesssim 2^{2k}\rho(\xi)^{2},
\end{align*}
which shows $K\lesssim2^{2k}$.
\end{proof}
\begin{lemma}\label{lll2}
Let $\rho\in\mathcal{L}_{0}$, $\delta\in(0,\alpha]$ and $k\in\mathbb{N}^{+}$. For any $(\rho,\delta)$-lattice $\{w_{j}\}$ on $\mathbb{D}$, we can divide it into $M$ subsequences which satisfies that if $w_{i}$ and $w_{j}$ are two different
points in the same subsequence, then $|w_{i}-w_{j}|\geq2^{k}\delta\min
\left(\rho(w_{i}),\rho(w_{j})\right)$.
\end{lemma}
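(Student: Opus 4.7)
The plan is to cast this as a bounded-degree graph coloring problem and then apply a greedy argument. Define on the lattice $\{w_j\}$ a symmetric relation by declaring $w_i \sim w_j$ (for $i\neq j$) whenever
\[
|w_i - w_j| < 2^{k}\delta\min\!\left(\rho(w_i),\rho(w_j)\right).
\]
The desired partition into $M$ subsequences is then precisely a proper vertex coloring of the graph with vertex set $\{w_j\}$ and edges $\sim$: different colors within an edge means exactly that two points sharing a color satisfy $|w_i-w_j|\geq 2^k\delta\min(\rho(w_i),\rho(w_j))$.

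First I would bound the degree of this graph uniformly. By the very definition of $\sim$, every neighbor $w_i$ of a fixed vertex $w_j$ lies in the set $D_k(w_j)=\{z\in\mathbb{D}: |w_j-z|<2^k\delta\min(\rho(w_j),\rho(z))\}$ of Lemma \ref{lll1}. That lemma furnishes a constant $K=K(k)$, depending only on $k$ (not on the base point $w_j$ or on the particular lattice), such that at most $K$ lattice points lie in $D_k(w_j)$. Hence every vertex has at most $K-1$ neighbors.

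Next I would apply the standard greedy coloring procedure: enumerate the lattice $w_1,w_2,\dots$ and inductively assign to $w_j$ the smallest index in $\{1,\dots,K\}$ not already used by any of its neighbors among $w_1,\dots,w_{j-1}$. Since at most $K-1$ previously colored vertices can conflict with $w_j$, some color in $\{1,\dots,K\}$ is always available. Setting $M=K$ and letting each subsequence consist of the lattice points sharing a common color produces the required decomposition.

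There is no serious obstacle here: the entire content sits in the degree bound from Lemma \ref{lll1}, and the coloring step is a textbook induction. The only mild point to be careful about is that $\sim$ must really be symmetric, which is immediate because $\min(\rho(w_i),\rho(w_j))$ is symmetric in $i,j$, and that the constant $M=K$ depends only on $k$ and on $\|\rho\|_{\mathcal L}$ through Lemma \ref{lll1}, not on $\delta$ or on the choice of lattice.
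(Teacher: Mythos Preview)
Your proof is correct and follows essentially the same strategy as the paper: both reduce to the degree bound furnished by Lemma \ref{lll1} and then apply a greedy argument. The only cosmetic difference is that the paper extracts maximal well-separated subsequences one at a time (yielding $M=K+1$), whereas you color vertices one at a time in the standard graph-coloring fashion (yielding the slightly sharper $M=K$); the underlying idea is identical.
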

\begin{proof}
Let $K$ denote the number given by Lemma \ref{lll1}. Consider the $(\rho,\delta)$-lattice $\{w_{j}\}$. Let $w_{j_{1}}=w_{1}$. If a point $w_{j_{m}}$ has been chosen from the lattice $\{w_{j}\}$, we choose $w_{j_{m+1}}\in\{w_{j}\}$ as the first term after $w_{j_{m}}$ such that $\left|w_{j_{m+1}}-w_{j_{l}}\right|\geq2^{k}\delta\min
\left(\rho(w_{j_{m+1}}),\rho(w_{j_{l}})\right)$, where $l=1, \ldots, m$. By induction, we can extract a $2^{k}\delta$-subsequence $\{w_{j_{m}}\}_{m}$ from the lattice $\{w_{j}\}$, we stop once the subsequence is maximal, i.e., when all the remaining points $x$ of $\{w_{j}\}$ satisfy $\left|x-w_{x}\right|<2^{k}\delta\min
\left(\rho(x),\rho(w_{x})\right)$ for some $w_{x}$ in the subsequence. We continue to select another maximal $2^{k}\delta$-subsequence from the remaining points of
$\{w_{j}\}$, and we repeat the process until we obtain $M = K + 1$ maximal $2^{k}\delta$-subsequences. If there are no points left from the lattice, things are done. If at least one point is left from $\{w_{j}\}$, this implies that there are $M = K + 1$ distinct points in the lattice satisfying $\left|w_{i}-w_{j}\right|<2^{k}\delta\min
\left(\rho(w_{i}),\rho(w_{j})\right)$, which  contradicts with the choice of $K$ from Lemma \ref{lll1}. This completes the proof of the lemma.
\end{proof}
Our characterizations on the Toeplitz operators $T_{\mu}$ in the Schatten $p$-class $\mathcal{S}_{p}(A_{\varphi}^{2})$ are shown as follows.

\begin{theorem}\label{Schattenclass}
Let $\varphi\in\mathcal{W}_{0}$ with $\frac{1}{\sqrt{\Delta\varphi}}\asymp\rho\in\mathcal{L}_{0}$, $0<p<\infty$ and $\mu$ be a finite positive Borel measure on $\mathbb{D}$. Then the following statements are equivalent:\\
(a) The Toeplitz operator $T_\mu$ is in $\mathcal{S}_{p}(A_{\varphi}^{2})$;\\
(b) The function $\widehat{\mu}_{\delta}$ is in $L^p(\mathbb{D},d\lambda_{\rho})$ for some (or any) $\delta\in(0, \alpha]$;\\
(c) The sequence $\{\widehat{\mu}_{r}(w_{n})\}_{n}\in l^{p}$ for some (or any) $(\rho, r)$-lattice $\{w_{n}\}$ with $r\in(0, \alpha]$ sufficiently small;\\
(d) $\widetilde{T}_{\mu}\in L^p(\mathbb{D},d\lambda_{\rho})$.
\end{theorem}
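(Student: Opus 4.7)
The plan is to reduce the four-way equivalence to (a) $\Leftrightarrow$ (b) and then handle the two nontrivial ranges of $p$ by different methods. First I would verify the identity
\begin{align*}
\widetilde{T}_\mu(z) = \langle T_\mu k_z, k_z\rangle_{A_\varphi^2} = \int_{\mathbb{D}} |k_z(w)|^2 e^{-2\varphi(w)} d\mu(w) = \widetilde{\mu}(z),
\end{align*}
which is a direct application of Lemma \ref{classicalequation} with $f = g = k_z$. Proposition \ref{ThreeEquivalents1} then supplies (b) $\Leftrightarrow$ (c) $\Leftrightarrow$ (d) at once, and Corollary \ref{Schattencorollary1} takes care of (a) $\Rightarrow$ (d) for $p \geq 1$ as well as (d) $\Rightarrow$ (a) for $0 < p \leq 1$. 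The real work therefore lies in proving (b) $\Rightarrow$ (a) when $p > 1$ and (a) $\Rightarrow$ (c) when $0 < p < 1$.

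For $p > 1$ I would use a duality argument. Since $T_\mu \geq 0$,
\begin{align*}
\|T_\mu\|_{\mathcal{S}_p} \asymp \sup\{\mathrm{Tr}(T_\mu B) : B \geq 0 \text{ of finite rank}, \ \|B\|_{\mathcal{S}_{p'}} \leq 1\}.
\end{align*}
For such a $B = \sum_m \sigma_m u_m \otimes u_m$ with orthonormal $\{u_m\} \subset A_\varphi^2$, Lemma \ref{classicalequation} gives $\mathrm{Tr}(T_\mu B) = \sum_m \sigma_m \int |u_m(w)|^2 e^{-2\varphi(w)} d\mu(w)$. Applying Proposition \ref{corollarysubmean} termwise to each analytic $u_m$, combining with $\sum_m \sigma_m |u_m(w)|^2 = \|K_w\|_{A_\varphi^2}^2 \widetilde{B}(w)$, and invoking $\|K_w\|_{A_\varphi^2}^2 e^{-2\varphi(w)} \asymp \rho(w)^{-2}$ from Lemma \ref{kernelestimate2}, I obtain
\begin{align*}
\mathrm{Tr}(T_\mu B) \lesssim \int_{\mathbb{D}} \widehat{\mu}_\delta(w) \widetilde{B}(w)\, d\lambda_\rho(w) \leq \|\widehat{\mu}_\delta\|_{L^p(d\lambda_\rho)} \|\widetilde{B}\|_{L^{p'}(d\lambda_\rho)}
\end{align*}
by H\"older. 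Since $p' \geq 1$, Lemma \ref{Schatten1}(b) bounds $\|\widetilde{B}\|_{L^{p'}(d\lambda_\rho)} \lesssim \|B\|_{\mathcal{S}_{p'}} \leq 1$, so $\|T_\mu\|_{\mathcal{S}_p} \lesssim \|\widehat{\mu}_\delta\|_{L^p(d\lambda_\rho)}$.

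For $0 < p < 1$ I would follow a Luecking-type decomposition, for which Lemmas \ref{lll1}--\ref{lll2} have already been prepared. Given any $(\rho, r)$-lattice $\{w_j\}$, Lemma \ref{lll2} splits it into finitely many highly separated subsequences, so it suffices to bound $\sum_m \widehat{\mu}_r(v_m)^p$ for one such subsequence $\{v_m\}$. I would then pick disjoint $E_m \subset D^r(v_m)$, put $\mu_m = \mu|_{E_m}$, and use $T_\mu \geq \sum_m T_{\mu_m} \geq 0$ together with Weyl monotonicity of Schatten norms to get $\|T_\mu\|_{\mathcal{S}_p} \geq \|\sum_m T_{\mu_m}\|_{\mathcal{S}_p}$. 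The concentration of each $\mu_m$ near $v_m$ makes $T_{\mu_m}$ close in Schatten $p$-norm to the rank-one operator $a_m\, k_{v_m} \otimes k_{v_m}$ with $a_m \asymp \widehat{\mu}_r(v_m)$. I would finally factor $\sum_m a_m k_{v_m} \otimes k_{v_m} = \Psi D \Psi^*$, where $\Psi:\ell^2 \to A_\varphi^2$ sends $e_m \mapsto k_{v_m}$ and $D = \mathrm{diag}(a_m)$; the sparsity of $\{v_m\}$ combined with the kernel decay from Lemma \ref{kernelestimate1} makes the Gram matrix $\Psi^*\Psi$ a small perturbation of the identity on $\ell^2$ (via Schur's test), yielding
\begin{align*}
\|\Psi D \Psi^*\|_{\mathcal{S}_p}^p \gtrsim \|D\|_{\mathcal{S}_p}^p = \sum_m a_m^p \asymp \sum_m \widehat{\mu}_r(v_m)^p.
\end{align*}

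The main obstacle will be this last step, where one must pass from the Schatten $p$-norm of $\Psi D \Psi^*$ to that of $D$ when $0 < p < 1$, despite the Schatten quasinorm only satisfying a $p$-triangle inequality. The standard remedy is to exploit the near-invertibility of $\Psi^*\Psi = I + E$, with $\|E\|_{op}$ made as small as one wishes by choosing the separation parameter $k$ in Lemma \ref{lll2} large enough, and then apply the Schatten ideal inequality in its quasinorm form together with a perturbation argument that simultaneously absorbs the rank-one approximation error for $T_{\mu_m}$.
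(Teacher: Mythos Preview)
Your overall structure is correct and matches the paper: the identity $\widetilde{T}_\mu=\widetilde{\mu}$, Proposition~\ref{ThreeEquivalents1} for (b)$\Leftrightarrow$(c)$\Leftrightarrow$(d), and Corollary~\ref{Schattencorollary1} for the easy halves of (a)$\Leftrightarrow$(d) are exactly what the paper does. Your duality argument for (b)$\Rightarrow$(a) when $p>1$ is a legitimate alternative to the paper's direct computation of $\sum_n\langle T_\mu e_n,e_n\rangle^p$ via \cite[Theorem~1.27]{Zhu1}; both routes use Proposition~\ref{corollarysubmean} and the bound $\sum|e_n(w)|^2\le\|K_w\|^2$ at the same pivotal moment, so the difference is largely cosmetic.

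The gap is in your treatment of (a)$\Rightarrow$(c) for $0<p<1$. The separation step via Lemma~\ref{lll2} and the reduction to $T_\nu=\sum_m T_{\mu_m}$ are correct and match the paper, as does the monotonicity $\|T_\mu\|_{\mathcal S_p}\ge\|T_\nu\|_{\mathcal S_p}$. But the step ``$T_{\mu_m}$ is close in Schatten $p$-norm to $a_m\,k_{v_m}\otimes k_{v_m}$'' is not justified and is genuinely hard: $T_{\mu_m}=\int_{E_m}(K_w\otimes K_w)e^{-2\varphi(w)}\,d\mu(w)$ is an integral of rank-one operators, and for $p<1$ there is no integral triangle inequality in $\mathcal S_p$ that lets you pass from pointwise closeness of $K_w$ to $K_{v_m}$ to an $\mathcal S_p$-error of size $\epsilon\,\widehat\mu_r(v_m)$. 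The separation parameter $k$ in Lemma~\ref{lll2} does not shrink this error either, since it is governed by the radius $r$, not by the spacing of the $v_m$.

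The paper avoids this difficulty entirely. Instead of approximating $T_\nu$ by $\Psi D\Psi^*$, it \emph{conjugates} $T_\nu$ by the synthesis operator $G:e_n\mapsto k_{a_n}$ (your $\Psi$), obtaining the concrete matrix $T=G^*T_\nu G$ with entries $\langle T_\nu k_{a_j},k_{a_k}\rangle$, and then splits $T=T_1+T_2$ into its diagonal and off-diagonal parts. Rotfel'd's inequality gives $\|T\|_{\mathcal S_p}^p\ge\|T_1\|_{\mathcal S_p}^p-\|T_2\|_{\mathcal S_p}^p$; the diagonal contributes $\gtrsim\sum_n\widehat\mu_r(a_n)^p$ by Lemma~\ref{kernelestimate1}, while the off-diagonal is bounded entrywise using the kernel decay \eqref{(Carlesonlemma2)} and shown to be $\le\tfrac12\sum_n\widehat\mu_r(a_n)^p$ once the separation $m$ is large. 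This is where the separation parameter actually does the work, and no rank-one approximation of $T_{\mu_m}$ is ever needed. Your Gram-matrix idea for $\Psi^*\Psi$ is morally the same almost-orthogonality, but applied to the wrong object; redirecting it to $G^*T_\nu G$ as the paper does closes the argument cleanly.
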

\begin{proof}
According to Proposition \ref{ThreeEquivalents1}, the statements $(b)$, $(c)$ and $(d)$ are equivalent. For $p>1$, we have the implication $(a)\Rightarrow(b)$ by Corollary \ref{Schattencorollary1} and the equivalence $(b)\Leftrightarrow(d)$. The implication $(c)\Rightarrow(a)$ for $0<p<1$ follows directly from Corollary \ref{Schattencorollary1} and $(c)\Leftrightarrow(d)$. To complete our proof, it remains to show the implication $(b)\Rightarrow(a)$ for $p>1$ and  $(a)\Rightarrow(c)$ for $0<p<1$.

We will first prove $(b)\Rightarrow(a)$ for $p>1$. Suppose $\widehat{\mu}_{\delta}\in L^p(\mathbb{D},d\lambda_{\rho})$ for some $\delta\in(0, \alpha]$, then $T_{\mu}$ must be compact on $A_{\varphi}^{2}$ by the equivalent $(b)\Leftrightarrow(c)$ and Theorem \ref{ToeplitzoperatorA}.
It is easy to see that
\begin{align}\label{555}
\sum_{n}\left\langle T_{\mu} e_{n}, e_{n}\right\rangle_{A_{\varphi}^{2}}^{p}=\sum_{n}\left(\int_{\mathbb{D}}\left|e_{n}(z)\right|^{2} e^{-2\varphi(z)} d \mu(z)\right)^{p}
\end{align}
for any orthonormal set $\left\{e_{n}\right\}$ on $A_{\varphi}^{2}$. It follows by Proposition \ref{corollarysubmean} that
\begin{align*}
\int_{\mathbb{D}}\left|e_{n}(z)\right|^{2} e^{-2\varphi(z)} d \mu(z)
\lesssim \int_{\mathbb{D}}\left|e_{n}(w)\right|^{2} e^{-2\varphi(w)} \widehat{\mu}_{\delta}(w) d A(w).
\end{align*}
Since $p>1$ and $\left\|e_{n}\right\|_{A_{\varphi}^{2}}=1$, apply H\"{o}lder's inequality, we arrive at
$$
\left(\int_{\mathbb{D}}\left|e_{n}(z)\right|^{2} e^{-2\varphi(z)} d \mu(z)\right)^{p} \lesssim \int_{\mathbb{D}}\left|e_{n}(w)\right|^{2} e^{-2\varphi(w)} \widehat{\mu}_{\delta}(w)^{p} d A(w).
$$
Putting the above inequality into (\ref{555}) and using estimate (\ref{ke21}), we obtain
\begin{align*}
\sum_{n}\left\langle T_{\mu} e_{n}, e_{n}\right\rangle_{A_{\varphi}^{2}}^{p}
&\lesssim\int_{\mathbb{D}}\left(\sum_{n}\left|e_{n}(w)\right|^{2}\right) e^{-2\varphi(w)} \widehat{\mu}_{\delta}(w)^{p} d A(w)\\
&\leq\int_{\mathbb{D}}\|K_{w}\|_{A_{\varphi}^{2}}^{2}e^{-2\varphi(w)} \widehat{\mu}_{\delta}(w)^{p} d A(w)\asymp\int_{\mathbb{D}}\widehat{\mu}_{\delta}(w)^{p}d\lambda_{\rho}(w).
\end{align*}
It follows from \cite[Theorem 1.27]{Zhu1} that $T_{\mu}$ is in $S_{p}(A_{\varphi}^{2})$ with $\|T_{\mu}\|_{S_{p}(A_{\varphi}^{2})} \lesssim\|\widehat{\mu}_{\delta}\|_{L^{p}\left(\mathbb{D}, d \lambda_{\rho}\right)}$.

Finally, we will show $(a)\Rightarrow(c)$ for $0<p<1$. The idea for this proof has its origins in the previous work of Semmes \cite{Semmes1} and Luecking \cite{Luecking0}. Assume that $T_\mu$ is in $\mathcal{S}_{p}(A_{\varphi}^{2})$. Let $\left\{w_{n}\right\}$ be a $(\rho,r)$-lattice on $\mathbb{D}$ with $r\in(0, \alpha]$ sufficiently small. We need to prove that the sequence $\left\{\widehat{\mu}_{r}\left(w_{n}\right)\right\}_{n}$ is in $l^{p}$. For this purpose, we fix a large positive integer $m\geq2$ and use Lemma \ref{lll2} to divide the lattice $\left\{w_{n}\right\}$ into $M$ subsequences such that $|w_{i}-w_{j}|\geq2^{m}r\min\left(\rho(w_{i}),\rho(w_{j})\right)$
whenever the two different points $w_{i}$ and $w_{j}$ are staying in the same subsequence. Let $\left\{a_{n}\right\}$ be such a subsequence and consider the measure $\nu$ defined by
$$d\nu=\left(\sum_{n}\chi_{n}\right) d\mu ,$$
where $\chi_{n}$ denotes the characteristic function of $D^{r}(a_{n})$. Since $k\geq2$, the discs $D^{r}(a_{n})$ are pairwise disjoints. Since $T_{\mu}\in\mathcal{S}_{p}(A_{\varphi}^{2})$ and $0 \leq \nu \leq \mu$, then
$0 \leq T_{\nu} \leq T_{\mu}$, which gives that $T_{\nu}\in\mathcal{S}_{p}(A_{\varphi}^{2})$ with $\left\|T_{\nu}\right\|_{\mathcal{S}_{p}(A_{\varphi}^{2})}
\leq\left\|T_{\mu}\right\|_{\mathcal{S}_{p}(A_{\varphi}^{2})}$.

Given an orthonormal basis
$\{e_{n}\}$ for $A_{\varphi}^{2}$, we define an operator $G$ on $A_{\varphi}^{2}$ by
\begin{align}\label{4444}
Gf=\sum_{n}\langle f,e_{n}\rangle_{A_{\varphi}^{2}} k_{a_{n}},\ \ f\in A_{\varphi}^{2}.
\end{align}%\sum_{n}\lambda_{n}e_{n}
Then $G$ is bounded on $A_{\varphi}^{2}$ by Proposition \ref{atomicdecomposition}. Hence the operator $T=G^{\ast}T_{\nu}G$ is in $\mathcal{S}_{p}(A_{\varphi}^{2})$ with
\begin{align}\label{1212}
\|T\|_{\mathcal{S}_{p}(A_{\varphi}^{2})} \leq\|G\|^{2} \cdot\left\|T_{\nu}\right\|_{\mathcal{S}_{p}(A_{\varphi}^{2})}
\lesssim\left\|T_{\mu}\right\|_{\mathcal{S}_{p}(A_{\varphi}^{2})}.
\end{align}

By using  (\ref{4444}) and the relation
$$\langle Tf, g\rangle_{A_{\varphi}^{2}}=\langle T_{\nu}Gf, Gg\rangle_{A_{\varphi}^{2}},\quad f,g \in A_{\varphi}^{2},$$
we have
$$
T f=\sum_{n, j}\left\langle T_{\nu} k_{a_{n}}, k_{a_{j}}\right\rangle_{A_{\varphi}^{2}}\left\langle f, e_{n}\right\rangle_{A_{\varphi}^{2}} e_{j}, \quad f \in A_{\varphi}^{2},
$$

We decompose the operator $T$ as $T=T_{1}+T_{2}$, where $T_{1}$ is the diagonal operator defined by
$$
T_{1} f=\sum_{n}\left\langle T_{\nu} k_{a_{n}}, k_{a_{n}}\right\rangle_{A_{\varphi}^{2}}\left\langle f, e_{n}\right\rangle_{A_{\varphi}^{2}} e_{n}, \quad f \in A_{\varphi}^{2},
$$
and $T_{2}$ is the non-diagonal part defined by $T_{2}=T-T_{1}$. Apply  Rotfel'd inequality \cite{Thompson},
we obtain
\begin{align}\label{sp1}
\|T\|_{\mathcal{S}_{p}(A_{\varphi}^{2})}^{p} \geq\|T_{1}\|_{\mathcal{S}_{p}(A_{\varphi}^{2})}^{p}
-\|T_{2}\|_{\mathcal{S}_{p}(A_{\varphi}^{2})}^{p}.
\end{align}

Since $T_{1}$ is positive diagonal operator, by Lemma \ref{classicalequation}, estimates (\ref{ke12}), (\ref{ke21}) and \cite[Lemma 3.1]{Hu1}, we deduce
\begin{align}\label{sp10}
\|T_{1}\|_{\mathcal{S}_{p}(A_{\varphi}^{2})}^{p}\nonumber&=\sum_{n}\left\langle T_{\nu} k_{a_{n}}, k_{a_{n}}\right\rangle_{A_{\varphi}^{2}}^{p}
=\sum_{n}\left(\int_{\mathbb{D}}\left|k_{a_{n}}(z)\right|^{2} e^{-2\varphi(z)} d \nu(z)\right)^{p}\\
&\gtrsim\sum_{n}\left(\int_{D^{r}(a_{n})}\frac{1}{\rho(z)^{2}}d\mu(z)\right)^{p}
\gtrsim\sum_{n}\widehat{\mu}_{r}(a_{n})^{p}.
\end{align}

On the other hand, since $0 < p < 1$, by \cite[Proposition 1.29]{Zhu1}, Lemmas \ref{classicalequation} and \ref{latticelemma}, we have
\begin{align}\label{sp2}
\|T_{2}\|_{\mathcal{S}_{p}(A_{\varphi}^{2})}^{p} \nonumber&\leq \sum_{n} \sum_{k}\left\langle T_{2} e_{n}, e_{k}\right\rangle_{A_{\varphi}^{2}}^{p}=\sum_{n, k : k \neq n}\left\langle T_{\nu} k_{a_{n}}, k_{a_{k}}\right\rangle_{A_{\varphi}^{2}}^{p}\\
\nonumber&\leq\sum_{n, k : k \neq n}\left(\int_{\mathbb{D}}\left|k_{a_{n}}(\xi)\right|\left|k_{a_{k}}(\xi)\right| e^{-2\varphi(\xi)} d \nu(\xi)\right)^{p}\\
&\leq\sum_{n, k : k \neq n}\left(\sum_{j}\int_{D^{r}(a_{j})}\left|k_{a_{n}}(\xi)\right|\left|k_{a_{k}}(\xi)\right| e^{-2\varphi(\xi)} d \mu(\xi)\right)^{p}.
\end{align}

If $n \neq k$, then $|a_{n}-a_{k}|\geq2^{m}r\min\left(\rho(a_{n}),\rho(a_{k})\right)$. Therefore, for $\xi\in D^{r}(a_{j})$, it is easy to see that either
$$|a_{n}-\xi|\geq2^{m-2}r\min\left(\rho(a_{n}),\rho(\xi)\right)\ \ \text{or}\ \
|\xi-a_{k}|\geq2^{m-2}r\min\left(\rho(\xi),\rho(a_{k})\right).$$
Hence, for any $\xi\in D^{r}(a_{j})$, we may suppose that $|a_{n}-\xi|\geq2^{m-2}r\min\left(\rho(a_{n}),\rho(\xi)\right)$.

For any $n,k\in\mathbb{N}^{+}$, let
$$
J_{n k}(\mu)=\sum_{j} \int_{D^{r}\left(a_{j}\right)}
\left|k_{a_{n}}(\xi)\right|\left|k_{a_{k}}(\xi)\right| e^{-2\varphi(\xi)} d\mu(\xi).
$$
Putting this notation into estimate (\ref{sp2}), we obtain
\begin{align}\label{sp3}
\|T_{2}\|_{\mathcal{S}_{p}(A_{\varphi}^{2})}^{p} \leq \sum_{n, k : k \neq n} J_{n k}(\mu)^{p}.
\end{align}

Taking $N$ large enough in estimate (\ref{(Carlesonlemma2)}) and using the fact that $|a_{n}-\xi|\geq2^{m-2}r\min\left(\rho(a_{n}),\rho(\xi)\right)$, we have
\begin{align*}
\left|k_{a_{n}}(\xi)\right|e^{-\varphi(\xi)}
\lesssim\frac{1}{\rho(\xi)}\left(\frac{\min\left(\rho(a_{n}),\rho(\xi)\right)}
{\left|a_{n}-\xi\right|}\right)^{N}\lesssim\frac{1}{\rho(\xi)}2^{-Nm}.
\end{align*}
Thus, apply this inequality  to the power $1/2$, we get
\begin{align}\label{sp4}
\left|k_{a_{n}}(\xi)\right|=\left|k_{a_{n}}(\xi)\right|^{1/2}
\left|k_{a_{n}}(\xi)\right|^{1/2}
\lesssim2^{-Nm/2}\frac{e^{1/2\varphi(\xi)}}
{\rho(\xi)^{1/2}}\left|k_{a_{n}}(\xi)\right|^{1/2}.
\end{align}

On the other hand, by estimates (\ref{ke11}) and (\ref{ke21}),
we obtain
\begin{align}\label{sp5}
\left|k_{a_{k}}(\xi)\right|=\frac{\left|K(\xi,a_{k})\right|^{1/2}}
{\|K_{a_{k}}\|_{A_{\varphi}^{2}}^{1/2}}\left|k_{a_{n}}(\xi)\right|^{1/2}
\lesssim\frac{e^{1/2\varphi(\xi)}}
{\rho(\xi)^{1/2}}\left|k_{a_{k}}(\xi)\right|^{1/2}.
\end{align}
Then estimates (\ref{sp4}), (\ref{sp5}) and \cite[Lemma 3.1]{Hu1} yield that
$$
J_{n k}(\mu) \lesssim 2^{-Nm/2} \sum_{j} \frac{1}{\rho(a_{j})} \int_{D^{r} \left( a_{j}\right)}\left|k_{a_{n}}(\xi)\right|^{1 / 2}\left|k_{a_{k}}(\xi)\right|^{1 / 2} e^{-\varphi(\xi)} d \mu(\xi).
$$
In views of Lemma \ref{submeanlemma}, (\ref{vip}) and \cite[Lemma 3.1]{Hu1}, for $\xi\in D^{r}(a_{j})$, one has
\begin{align*}
\left|k_{a_{n}}(\xi)\right|^{1 / 2} e^{-1 / 2\varphi(\xi)}
\lesssim\left(\frac{1}{\rho(\xi)^{2}}\int_{D^{r}\left(\xi\right)}
\left|k_{a_{n}}(z)\right|^{p / 2} e^{-p / 2\varphi(z)}dA(z)\right)^{1/p}
\lesssim\rho(a_{j})^{-2/p}S_{n}(a_{j})^{1/p},
\end{align*}
where
$$S_{n}(\cdot)=\int_{D^{2\alpha}(\cdot)}
\left|k_{a_{n}}(z)\right|^{p / 2} e^{-p / 2\varphi(z)}dA(z).$$

By a similar discussion, we also have
\begin{align*}
\left|k_{a_{k}}(\xi)\right|^{1 / 2} e^{-1 / 2\varphi(\xi)}
\lesssim\rho(a_{j})^{-2/p}S_{k}(a_{j})^{1/p}.
\end{align*}
Therefore,  since $N$ is taken large enough, we obtain
\begin{align*}
J_{n k}(\mu)&\lesssim 2^{-Nm/2} \sum_{j} \frac{\rho(a_{j})^{-4/p}}{\rho(a_{j})} S_{n}(a_{j})^{1/p} S_{k}(a_{j})^{1/p}\mu\left(D^{r}(a_{j})\right)\\
&\leq2^{-m} \sum_{j}
\rho(a_{j})^{1-4/p}S_{n}(a_{j})^{1/p}S_{k}(a_{j})^{1/p}
\widehat{\mu}_{r}(a_{j}).
\end{align*}

Since $0 < p < 1$, using the fact (\ref{fact0}), we have
\begin{align*}
J_{n k}(\mu)^{p}\lesssim2^{-m p} \sum_{j} \rho(a_{j})^{p-4}  S_{n}(a_{j})  S_{k}(a_{j})  \widehat{\mu}_{r}(a_{j})^{p}.
\end{align*}
This together with estimate (\ref{sp3}) give
\begin{align}\label{sp6}
\left\|T_{2}\right\|_{\mathcal{S}_{p}(A_{\varphi}^{2})}^{p} \lesssim 2^{-m p} \sum_{j} \rho(a_{j})^{p-4} \widehat{\mu}_{r}(a_{j})^{p}\left(\sum_{n} S_{n}(a_{j})\right)\left(\sum_{k} S_{k}(a_{j})\right).
\end{align}

On the other hand, we have
\begin{align}\label{sp9}
\sum_{k}S_{k}(a_{j})
=\int_{D^{2\alpha}(a_{j})}\left(\sum_{k}\left|k_{a_{k}}(z)\right|^{p/2} \right)e^{-p/2\varphi(z)}dA(z).
\end{align}

Now, we assert that
\begin{align}\label{sp7}
\sum_{k}\left|k_{a_{k}}(z)\right|^{p/2}\lesssim e^{p/2\varphi(z)}\rho(z)^{-p/2}.
\end{align}

Applying estimate (\ref{(Carlesonlemma1)}), \cite[Lemma 3.1]{Hu1} and $(c)$ of Lemma \ref{latticelemma}, we obtain
\begin{align}\label{sp8}
\sum_{a_{k}\in D^{r_{0}}(z)}\left|k_{a_{k}}(z)\right|^{p/2}
\lesssim e^{p/2\varphi(z)}\sum_{a_{k}\in D^{r_{0}}(z)}\rho(a_{k})^{-p/2}
\lesssim e^{p/2\varphi(z)}\rho(z)^{-p/2}
\end{align}
for some $r_{0}>0$.

On the other hand, taking $N$ in estimate (\ref{(Carlesonlemma2)}) large enough such that $Np/2-2>0$, it follows that
\begin{align*}
\sum_{a_{k}\notin D^{r_{0}}(z)}\left|k_{a_{k}}(z)\right|^{p/2}
&\lesssim e^{p/2\varphi(z)}\rho(z)^{Np/2-p/2-2}\sum_{a_{k}\notin D^{r_{0}}(z)}\frac{\rho(a_{k})^{2}}{|z-a_{k}|^{Np/2}}\\
&=e^{p/2\varphi(z)}\rho(z)^{Np/2-p/2-2}\sum_{j=0}^{\infty} \sum_{a_{k} \in R_{j}(z)}\frac{\rho(a_{k})^{2}}{|z-a_{k}|^{Np/2}},
\end{align*}
where
$$
R_{j}(z)=\left\{\zeta \in \mathbb{D} : 2^{j} r_{0} \rho(z)\leq|\zeta-z| < 2^{j+1} r_{0} \rho(z)\right\},\ j=0,1,2 \ldots
$$

By the definition of $\mathcal{L}_{0}$, it's not difficult to see that, for
$j=0,1,2, \ldots$,
$$
D^{r_{0}}(a_{k}) \subset D^{5r_{0}2^{j}}(z) \quad \text { if } \quad a_{k} \in D^{r_{0}2^{j+1}}(z).
$$
Hence, this combines with Lemma \ref{latticelemma} gives
$$
\sum_{a_{k} \in R_{j}(z)} \rho\left(a_{k}\right)^{2}
\lesssim\left|D^{5r_{0}2^{j}}(z)\right|
\lesssim 2^{2 j} \rho(z)^{2}.
$$
Hence
\begin{align*}
\sum_{a_{k}\notin D^{r_{0}}(z)}\left|k_{a_{k}}(z)\right|^{p/2}
&\lesssim e^{p/2\varphi(z)}\rho(z)^{-p/2-2}\sum_{j=0}^{\infty}2^{-Npj/2}
\sum_{a_{k} \in R_{j}(z)}\rho(a_{k})^{2}\\
&\lesssim e^{p/2\varphi(z)}\rho(z)^{-p/2}\sum_{j=0}^{\infty}2^{\frac{(4-Np)}{2}j}\lesssim e^{p/2\varphi(z)}\rho(z)^{-p / 2}.
\end{align*}
This together with (\ref{sp8}) yields the assertion (\ref{sp7}).

Putting  estimate (\ref{sp7}) into (\ref{sp9}), we obtain
\begin{align}\label{5555}
\sum_{k} S_{k}(a_{j}) \lesssim \rho(a_{j})^{2-p / 2}.
\end{align}

In a same manner we get
\begin{align}\label{6666}
\sum_{n}S_{n}(a_{j})  \lesssim \rho(a_{j})^{2-p/ 2}.
\end{align}

Putting  estimates (\ref{5555}) and (\ref{6666}) into (\ref{sp6}), we finally obtain
$$\left\|T_{2}\right\|_{\mathcal{S}_{p}(A_{\varphi}^{2})}^{p} \lesssim 2^{-m p} \sum_{j} \widehat{\mu}_{r}(a_{j})^{p}
\leq \frac{1}{2} \sum_{j} \widehat{\mu}_{r}(a_{j})^{p}$$
when $m$ is chosen big enough. Putting this and estimate (\ref{sp10}) into (\ref{sp1}), we get
$$\sum_{j} \widehat{\mu}_{r}(a_{j})^{p}\lesssim
\|T\|_{\mathcal{S}_{p}(A_{\varphi}^{2})}^{p}.$$
Since this holds for each  of the $M$ subsequences of $\{w_{n}\}$, and by estimate (\ref{1212}), we have
\begin{align}\label{sp11}
\sum_{n} \widehat{\mu}_{r}(w_{n})^{p}\lesssim
M\|T\|_{\mathcal{S}_{p}(A_{\varphi}^{2})}^{p}\lesssim M\left\|T_{\mu}\right\|_{\mathcal{S}_{p}(A_{\varphi}^{2})}<\infty.
\end{align}
This completes the proof of  Theorem \ref{Schattenclass}.
\end{proof}
 %{\bf Acknowledgements:} The authors would like to thank the referees for making some very good suggestions.

\end{document}